\newcommand{\stkout}[1]{\ifmmode\text{\sout{\ensuremath{#1}}}\else\sout{#1}\fi}
\definecolor{dmagenta}{rgb}{.4,.1,.5}
\definecolor{dblue}{rgb}{.0,.0,.5}
\definecolor{mblue}{rgb}{.0,.0,.7}
\definecolor{ddblue}{rgb}{.0,.0,.4}
\definecolor{dred}{rgb}{.4,.0,.0}
\definecolor{mred}{rgb}{.5,.0,.0}
\definecolor{dgreen}{rgb}{.0,.5,.0}
\definecolor{Eeom}{rgb}{.0,.0,.5}
\definecolor{cm}{cmyk}{1,.0,.0,.0}
\numberwithin{equation}{section}
\theoremstyle{plain}
\newtheorem{theorem}{Theorem}[section]
\newtheorem{lemma}{Lemma}[section]
\newtheorem{corollary}{Corollary}[section]
\theoremstyle{definition}
\newtheorem{assumption}{Assumption}[section]
\newtheorem{definition}{Definition}[section]
\newtheorem{notation}{Notation}[section]
\theoremstyle{remark}
\newtheorem{remark}{Remark}[section]
\newtheorem{example}{Example}[section]
\crefname{section}{Section}{Sections}
\crefname{subsection}{Subsection}{Subsections}
\crefname{condition}{Condition}{Conditions}
\crefname{hypothesis}{Hypothesis}{Conditions}
\crefname{assumption}{Assumption}{Assumptions}
\crefname{notation}{Notation}{Notations}
\crefname{lemma}{Lemma}{Lemmas}
\crefname{fact}{Fact}{Facts}
\crefname{corollary}{Corollary}{Corollaries}
\crefname{theorem}{Theorem}{Theorems}
\crefname{claim}{Claim}{Claims}
\Crefname{figure}{Figure}{Figures}
\newcommand{\ttup}[1]{\textup{(}#1\textup{)}}
\newcommand{\smid}{\,|\,}
\newcommand{\rcn}{c}
\newcommand{\rcp}{{c_{\mathsf{p}}}}
\newcommand{\pdef}{\bm{M}^{+}}
\newcommand{\psdef}{\bm{M}^{+}_{0}}
\newcommand{\QSp}{\mathbb{Q}} 
\newcommand{\rc}{c}
\newcommand{\Ind}{\mathds{1}}       
\newcommand{\Act}{{\mathds{U}}}
\newcommand{\XX}{{\mathds{X}}}       
\newcommand{\KK}{\mathds{K}}        
\newcommand{\HH}{\mathds{H}}
\newcommand{\cU}{{\mathcal{U}}}     
\newcommand{\uuptau}{{\Breve{\uptau}}}
\newcommand{\Borel}{{\mathscr{B}}}  
\newcommand{\cB}{{\mathcal{B}}}     
\newcommand{\Cc}{{C}}               
\newcommand{\cE}{{\mathcal{E}}}     
\newcommand{\cF}{{\mathcal{F}}}     
\newcommand{\cG}{{\mathcal{G}}}  
\newcommand{\cJ}{{\mathcal{J}}}     
\newcommand{\cK}{{\mathcal{K}}}     
\newcommand{\cL}{{L}}     
\newcommand{\fL}{{\mathfrak{L}}}    
\newcommand{\cM}{{\mathcal{M}}}    
\newcommand{\cN}{{\mathcal{N}}}
\newcommand{\Pm}{{\mathfrak{P}}}    
\newcommand{\cR}{{\mathcal{R}}}     
\newcommand{\cS}{{\mathcal{S}}}     
\newcommand{\cT}{{\mathcal{T}}}     
\newcommand{\cV}{{\mathcal{V}}}     
\newcommand{\eom}{\mathfrak{M}_{\mathsf{erg}}}    
\newcommand{\cX}{{\mathcal{X}}}     
\newcommand{\Uadm}{\mathfrak{U}}
\newcommand{\Usm}{\mathfrak{U}_{\mathrm{sm}}}
\newcommand{\Ustab}{\mathfrak{U}_{\mathsf{stab}}}
\newcommand{\Usms}{\mathfrak{U}^\star_{\mathrm{sm}}}
\newcommand{\RR}{\mathds{R}} 
\newcommand{\NN}{\mathds{N}} 
\newcommand{\ZZ}{\mathds{Z}} 
\newcommand{\Rd}{{\mathds{R}^{d}}}
\DeclareMathOperator{\Exp}{\mathbb{E}} 
\DeclareMathOperator{\Prob}{\mathbb{P}} 
\newcommand{\D}{\mathrm{d}} 
\newcommand{\abs}[1]{\lvert#1\rvert}
\newcommand{\norm}[1]{\lVert#1\rVert}
\newcommand{\babs}[1]{\bigl\lvert#1\bigr\rvert}
\newcommand{\transp}{^{\mathsf{T}}}
\DeclareMathOperator*{\Argmin}{Arg\,min}
\DeclareMathOperator*{\trace}{trace}
\newcommand{\df}{\coloneqq}
\newcommand{\order}{{\mathcal{O}}} 
\DeclareRobustCommand\widecheck[1]{{\mathpalette\@widecheck{#1}}}
\def\@widecheck#1#2{%
    \setbox\z@\hbox{\m@th$#1#2$}%
    \setbox\tw@\hbox{\m@th$#1%
       \widehat{%
          \vrule\@width\z@\@height\ht\z@
          \vrule\@height\z@\@width\wd\z@}$}%
    \dp\tw@-\ht\z@
    \@tempdima\ht\z@ \advance\@tempdima2\ht\tw@ \divide\@tempdima\thr@@
    \setbox\tw@\hbox{%
       \raise\@tempdima\hbox{\scalebox{1}[-1]{\lower\@tempdima\box
\tw@}}}%
    {\ooalign{\box\tw@ \cr \box\z@}}}
\newlength{\dhatheight}
\newcommand{\ttl}{\Large
Average cost optimal control under weak ergodicity\\[5pt] hypotheses:
 Relative value iterations}
\begin{document}
\title[Average cost optimal control: Relative value iterations]
{\ttl}

\author[Ari Arapostathis]{Ari Arapostathis$^\dag$}
\address{$^{\dag}$Deceased, was with the Department of ECE,
The University of Texas at Austin,
Austin, TX~~78712}
\email{ari@utexas.edu}

\author[Vivek S. Borkar]{Vivek S. Borkar$^\ddag$}
\address{$^\ddag$Department of Electrical Engineering,
Indian Institute of Technology,
Powai, Mumbai}
\email{borkar@ee.iitb.ac.in}


\begin{abstract}
We study Markov decision processes with Polish state and action spaces.
The action space is state dependent and is not necessarily compact.
We first establish the existence of an optimal ergodic
occupation measure using only a near-monotone hypothesis on the running cost.
Then we study the well-posedness of Bellman equation,
or what is commonly known as the average cost optimality equation, under
the additional hypothesis of the existence of a small set.
We deviate from the usual approach which is based on the vanishing discount
method and instead map  the problem to an equivalent one for a controlled split chain.
We employ a stochastic representation of the Poisson equation to derive the
Bellman equation.
Next, under suitable assumptions,
we establish convergence results for the `relative value iteration'
algorithm which computes the solution of the Bellman equation recursively.
In addition, we present some results
concerning the stability and asymptotic optimality
of the associated rolling horizon policies.
\end{abstract}

\keywords{ergodic control, Bellman equation, inf-compact cost, relative value iteration}
\subjclass[2000]{Primary: 90C40, Secondary: 93E20}
\maketitle

\section{Introduction}

The long run average or `ergodic' cost is popular in applications when transients
are fast and/or unimportant and one is optimizing over possible asymptotic behaviors.
The dynamic programming equation for this criterion, in the finite state-action case,
goes back
to Howard \cite{Howard-60}. A recursive algorithm to solve it in the aforementioned
case is the so called relative value iteration scheme \cite{White-63},
dubbed so because it is a modification of the value iteration scheme for
the (simpler) discounted cost criterion. This  modification consists of subtracting
at each step a suitable offset and track only the `relative' values.
Suitable counterparts of this algorithm for a general state space are available,
if at all, only under rather strong conditions (see, e.g., Section~5.6 of
\cite{ref:HLL1}). Our aim here is to consider a special case of immense
practical importance, viz., that of a near-monotone or inf-compact cost which penalizes
instability \cite{Borkar-91}, \cite{Borkar-02}, and to establish both the dynamic programming equation
and the relative value iteration scheme for it.
Perforce the latter involves iteration in a function space and
as far as implementation is concerned,
would have
to be replaced by suitable finite approximations through either state
aggregation or parametrized approximation of the value function.
But the validity of such an approximate scheme
depends on provable convergence properties
of the algorithm. Our aim is to provide this.

The results on convergence of the relative value iteration presented
here may be viewed as discrete time counterparts of the
results of \cite{RVIM}. It is not, however, the case that they can be derived
simply from the results of \cite{RVIM}, which relies heavily on the analytic
machinery of the partial differential equations arising therein.
This, in
particular, leads to convenient regularity results which are not available here.

For studies on the \emph{average cost optimality equation} (ACOE)
 of Markov decision processes
(MDP) on Borel state space, we refer the reader to
\cites{Costa-12,Feinberg-02,Feinberg-12,Feinberg-17,H-L-91,Jaskiewicz-06,Schal,
Vega-Amaya-03,Vega-Amaya-18,ref:HLL1}.
All these papers assume only the (weak) Feller property on the transition kernel,
whereas in this paper the kernel is assumed to be strong Feller
(with the exception of \cref{L2.1,T2.1}).
Classical approaches based on the vanishing discount argument such as
\cites{Costa-12,Schal,H-L-91,Feinberg-12}
need to ensure some variant of pointwise boundedness
of the relative discounted value function.
This typically requires additional hypotheses, or it is directly
imposed as an assumption.
The  weakest condition appears in \cite{Feinberg-12} where only the limit
infimum of relative discounted value functions is required to be pointwise
bounded in the vanishing discount limit.
It follows from \cite[Theorem~4.1]{FAM-90} that if the solution of
the ACOE is bounded then the relative discounted value functions are also bounded
uniformly in the discount factor.
This is a very specific case though,
and for the more general case studied in this paper
it is unclear how our assumptions compare with those of \cite{Feinberg-12}.

Pointwise boundedness of discounted relative value functions was verified
from scratch for a specific application in \cite{Agarwal-08}.
The techniques therein, which leverage near-monotonicity in a manner
different from here, may be more generally applicable.

 Some of the aforementioned  works derive an
\emph{average cost optimality inequality} (ACOI) as opposed to an equation.
The ACOE is derived in \cites{Costa-12,Feinberg-02,ref:HLL1,Vega-Amaya-03,Feinberg-17,
Jaskiewicz-06,Vega-Amaya-18}.
Also worth noting is \cite{Feinberg-17} which derives the ACOE for a
classical inventory problem under a weak condition known as $\cK$-inf-compactness.
The works in \cites{Jaskiewicz-06,Vega-Amaya-18}
derive the ACOE under additional uniform stability conditions which we avoid. The works \cite{Vega-Amaya-03}, \cite{Vega-Amaya-18} also use a minorization condition like us, but the purpose there is to facilitate a fixed point argument which is possible due to the stronger stability assumptions. 

Our focus is on the ACOE rather than the ACOI because our eventual aim
is to establish convergence of relative value iteration for which this is
explicitly used.
Moreover, for this convergence result we require uniqueness of the
solution to the ACOE within a suitable class of functions.

Studies such as \cites{Schal,Feinberg-12,H-L-91} work with standard Borel
state spaces whereas we work with  Polish spaces.
We assume that the running cost is near-monotone
(see \hyperlink{C}{(C)} in \cref{S2.2}), a notion more general than the more
commonly used `inf-compactness'.
The latter requires the level sets of the
running cost functions to be compact, necessitating in particular that for
non-$\sigma$-compact spaces, they be extended real-valued.
On the contrary, a $\cK$-inf-compact cost
(see \hyperlink{A1}{(A1)} in \cref{S3.1})
together with \hyperlink{C}{(C)}
allows for more flexibility.

Furthermore, the above works do not address the relative value
iteration which is our main focus here.
This algorithm, after the seminal work of \cite{White-63} for the finite state case,
has been extended to denumerable state spaces in
\cites{Cavazos-96a,Cavazos-98,ChenMeyn-99}.
An analogous treatment for a general metric state space appears in
\cite[Section~5.6]{ref:HLL1}.
This directly assumes equicontinuity of the iterates, for which problems with
convex value functions \cite{FAM-92} have been cited as an example.
We do not make any such assumption. The related though distinct algorithm of policy 
iteration has been analyzed in \cite{Meyn-97}. This work also uses the `pseudo-atom' construction as we do, in order  to obtain a solution to the fixed policy Poisson equation. We use it to derive the Bellman equation itself using a representation of the value function.

Another important part of this work concerns the stability
and asymptotic optimality of rolling horizon policies.
Analogous results in the literature have been reported only under very strong
blanket ergodicity assumptions \cites{HLL-90,Cavazos-98}.
For a review of this topic, see \cite{Vecchia-12}.
In this paper, we avoid any blanket ergodicity  assumptions
and impose a stabilizability hypothesis, namely, that under some Markov control
the process is geometrically ergodic with a Lyapunov function that has the
same growth as the running cost (see \hyperlink{H2}{(H2)} and \cref{R6.1} in \cref{S6}).
This property is natural for `linear-like' problems, and is also
manifested in queuing problems with abandonment, or problems with
the structure in \cref{Ex6.1}.
Under this hypothesis, we assert in \cref{T6.1,T6.3}, global convergence for the relative value
iteration, and show in \cref{T6.2} that the rolling horizon procedure
is stabilizing after a finite number of iterations.
Then, under a `uniform' $\psi$-irreducibility condition,
\cref{T6.4} shows that the rolling horizon procedure is asymptotically optimal.
The latter is an important problem of current interest
(see, e.g., \cites{ref:HLL1,Chatterjee-15}).
Our results also contain computable error bounds.

The article is organized as follows.
\Cref{S2} has three main parts.
We first review the  formalism and basic notation of Markov decision processes in \cref{S2.1}
and then, in \cref{S2.2}, we establish the existence of an optimal ergodic occupation measure,
thus extending the results of the convex analytic framework in \cite{Borkar-02}
to MDPs on a Polish state space.
\Cref{S2.4} introduces an equivalent controlled split chain and the associated
\emph{pseudo-atom}.
\cref{S3} then derives the dynamic programming equation to characterize optimality, extending the approach of \cite{Borkar-91} for countable state space - compact action space case.
\cref{S4} establishes the convergence of the `value iteration', which is the name
we give to the analog of value iteration for discounted cost with no discounting,
but with the cost-per-stage function modified by subtracting from it the optimal cost.
The latter is in principle unknown, so this is not a legitimate algorithm.
It does, however, pave the way to prove convergence of the true relative value
iteration scheme, which we do in \cref{S5}.
\cref{S6} is devoted to the analysis of the rolling horizon procedure.

\subsection{Notation}\label{S-not}
We summarize some notation used throughout the paper.
We use $\Rd$ (and $\mathbb{R}^d_+$), $d\ge 1$, to denote the space of real-valued
$d$-dimensional (nonnegative) vectors, and write $\RR$ for $d=1$.
Also, $\NN$ denotes the natural numbers, and $\NN_0\df\NN\cup\{0\}$.
For $x, y\in \RR$, we let
$$x \vee y \,\df\, \max\{x,y\}\quad\text{and\ \ }x\wedge y \,\df\, \min\{x,y\}\,.$$
The Euclidean norm on $\Rd$ is denoted by $|\cdot|$.
We use $A^{c}$, $\Bar{A}$, and $\Ind_{A}$ to denote
the complement, the closure, and the indicator function of a set $A$, respectively.

For a Polish space $\XX$ we let $\Borel(\XX)$ stand for its Borel
$\sigma$-algebra, and $\Pm(\XX)$ for the space of probability measures
on $\Borel(\XX)$ with the Prokhorov topology.
We let $\cM(\XX)$, $\fL(\XX)$, and $C(\XX)$,
denote the spaces of real-valued Borel measurable functions,
lower semi-continuous functions bounded from below, and continuous functions
on $\XX$, respectively.
Also, $\cM_b(\XX)$, $\fL_b(\XX)$, and $C_b(\XX)$, denote
the corresponding subspaces consisting of bounded functions.

For a Borel probability measure $\mu$
on $\Borel(\XX)$ and a measurable function $f\colon\XX\to\RR$,
which is integrable under $\mu$, we often use the convenient notation
$\mu(f)\df\int_\XX f(x)\,\mu(\D{x})$.

For $f\in\cM(\XX)$, we define
\begin{equation*}
\norm{g}_f \,\df\, \sup_{x\in\XX}\, \frac{\abs{g(x)}}{1+\abs{f(x)}}\,,\quad
g\in\cM(\XX)\,,
\end{equation*}
and $\order(f) \df \{g\in\cM(\XX)\colon \norm{g}_f<\infty\}$.

\subsection{Assumptions} In this subsection, we outline the various assumptions used in this article. These have been introduced closer to their first use and after the relevant notation is in place. Not all of them are required for everything.

\Cref{A2.1} introduced in \Cref{S2.1} is the basic assumption regarding the minimal regularity hypothesis about the transition kernel, the set-valued map specifying available controls at each state, and the cost function. This is assumed throughout this work. 

Assumption \hyperlink{C}{(C)} in  \Cref{S2.2} refines further the assumption on the cost function. This too holds throughout and is first used in  \Cref{T2.1}.

Assumption  \hyperlink{A0}{(A0)} in  \Cref{S2.4} is an adaptation of the standard `minorization' condition for the construction of the Athreya-Ney-Nummelin pseudo-atom for our purposes and facilitates the derivation of the Poisson equation for the split chain in this section.  \hyperlink{A1}{(A1)} and \hyperlink{A2}{(A2)} in  \Cref{S3.1} are additional assumptions  for the passage  from the Poisson equation to the \textit{Bellman} equation in \Cref{T3.1}.

Assumption \hyperlink{}{A3.1} in \Cref{S3.3} strengthens our regularity requirement on the controlled transition kernel, from weak feller to strong Feller. This is required in the derivation of the Bellman equation and is therefore operative throughout the rest of the article. It also plays a role in the subsequent analysis of the relative value iteration algorithm.

One of our main results is the convergence of the relative value iteration to solve the Bellman equation. This requires the additional assumption \hyperlink{H1}{(H1)} in  \Cref{S4.2}, which proves the convergence of value iteration (\Cref{L4.1}, \Cref{T4.1}) assuming the optimal cost to be known. Convergence of relative value iteration in \Cref{T5.1} of \Cref{S5} follows from this. (Part (c) of the theorem invokes  \hyperlink{H2}{(H2)} from the subsequent section, but parts (a), (b) do not require it.) 

Assumption  \hyperlink{H2}{(H2)} and its equivalent statement \hyperlink{H2'}{(H2')} are used to justify the rolling horizon procedure in \Cref{T6.1}, \Cref{T6.2}, which requires stronger conditions. The corresponding convergence result for optimal policies  in \Cref{T6.3} needs additional conditions that are embedded in the statement of the theorem itself.

\section{Preliminaries}\label{S2}

In this paper, we consider a controlled Markov chain
otherwise referred to as a Markov decision process (MDP),
taking values in a Polish space $\XX$.

\subsection{The MDP model}\label{S2.1}
Recall the notation introduced in \cref{S-not}.
According to the most prevalent definition in the literature
(see \cites{ref:HLL1,Dyn-Yush}), an MDP is represented 
as a tuple $\bigl(\XX,\Act,\cU,P,\rc\bigr)$,
whose elements can be described as follows.
\begin{itemize}
\item[(a)]
The \emph{state space} $\XX$ is a Polish space (complete, separable, metric).
Its elements are called \emph{states}.
\item[(b)]
$\Act$ is a Polish space, referred to as the \emph{action} or
\emph{control space}.
\item[(c)]
The map $\cU\colon \XX\to \Borel(\Act)$ is a strict, measurable multifunction.
The set of admissible state/action pairs is defined as
\begin{equation*}
\KK \,\df\, \bigl\{(x,u)\colon\, x\in\XX,\,u\in\cU(x)\bigr\}\,,
\end{equation*}
endowed with the relative topology
corresponding to $\XX\times\Act$.
\item[(d)]
The transition probability $P(\cdot\,|\, x,u)$ is a stochastic kernel on
$\KK\times\Borel(\XX)$, that is, $P(\,\cdot\,|\, x,u)$ is
a probability measure on $\Borel(\XX)$ for each $(x,u)\in\KK$,
and $(x,u) \mapsto P(A\,|\, x,u)$ is in $\cM(\KK)$ for each $A\in\Borel(\XX)$.
\item[(e)]
The map $c\colon \KK\to\RR$ is measurable, and is called the \emph{running
cost} or \emph{one stage cost}.
We assume that it is bounded from below in $\KK$, so without loss of generality,
it takes values in $[1,\infty]$.
\end{itemize}

The (admissible) \emph{history spaces} are defined as
\begin{equation*}
\HH_0\,\df\, \XX\,, \quad \HH_{t} \,\df\, \KK^{t-1}\times\XX\,,\quad t\in\NN\,,
\end{equation*}
and the canonical sample space is defined as $\Omega\df (\XX\times\Act)^\infty$.
These spaces are endowed with their respective product topologies and
are therefore Polish spaces.
The state, action (or control), and information processes, denoted
by $\{X_t\}_{t\in\NN_0}$,  $\{U_t\}_{t\in\NN_0}$ and $\{H_t\}_{t\in\NN_0}$,
respectively, are defined by the projections
\begin{equation*}
X_t(\omega) \,\df\, x_t\,,\quad U_t(\omega) \,\df\, u_t\,,
\quad H_t(\omega) \,\df\, (x_0,\dotsc, u_{t-1}, x_t)
\end{equation*}
for each $\omega=(x_0,\dotsc,u_{t-1},x_t, u_t,\dotsc)\in\Omega$.

An \emph{admissible control strategy}, or \emph{policy}, is a sequence
$\xi = \{\xi_t\}_{t\in\NN_0}$ of stochastic kernels on
$\HH_t\times\Borel(\Act)$ satisfying the constraint
\begin{equation*}
\xi_t(\cU(x_t)\,|\, h_t) \,=\, 1\,,\quad x_t\in\XX\,,\; h_t\in\HH_t\,.
\end{equation*}
The set of all admissible strategies is denoted by $\Uadm$.
It is well known
(see \cite[Prop.\ V.1.1, pp.~162--164]{Neveu})
that for any given $\mu\in\Pm(\XX)$ and $\xi\in\Uadm$ there exists
a unique probability measure $\Prob^\xi_\mu$ on $\bigl(\Omega,\Borel(\Omega)\bigr)$
satisfying
\begin{align*}
\Prob^\xi_\mu(X_0\in D) &\,=\, \mu(D)\qquad\forall\, D\in\Borel(\XX)\,,\\
\Prob^\xi_\mu(U_t\in C\,|\, H_t) &\,=\, 
\xi_t(C\,|\, H_t)
\quad \Prob^\xi_\mu\text{-a.s.}\,,\quad \forall\, C\in\Borel(\Act)\,,\\
\Prob^\xi_\mu(X_{t+1}\in D\,|\, H_t,U_t)  &\,=\,
P(D\,|\, X_t, U_t) \quad \Prob^\xi_\mu\text{-a.s.}\,,
\quad\forall\, D\in\Borel(\XX)\,.
\end{align*}
The expectation operator corresponding to $\Prob^\xi_\mu$
is denoted by $\Exp^\xi_\mu$.
If $\mu$ is a Dirac mass at $x\in\XX$, we simply write these
as $\Prob^\xi_x$ and $\Exp^\xi_x$.

A strategy $\xi$ is called \emph{randomized Markov}
if there exists a sequence of measurable maps $\{v_t\}_{t\in\NN_0}$,
where $v_t\colon \XX\to \Pm(\Act)$ for each $t\in\NN_0$,
such that
$$\xi_t(\,\cdot\,|\, H_t) \,=\, v_t(X_t)(\cdot) \quad \Prob^\xi_\mu\text{-a.s.}$$
With some abuse of notation, such a strategy is identified
with the sequence $v=\{v_t\}_{t\in\NN_0}$.
Note then that
 $v_t$ may be written as a stochastic kernel $v_t(\cdot\,|\, x)$ on
$\XX\times\Borel(\Act)$ which satisfies $v_t(\cU(x)\,|\, x)=1$.

We say that a  Markov randomized strategy $\xi$ is \emph{simple}, or \emph{precise},
if $\xi_t$ is a Dirac mass, in which case $v_t$ is identified
with a Borel measurable function $v_t\colon \XX\to\Act$.
In other words, $v_t$ is a measurable selector from the set-valued map $\cU(x)$ \cite{Feinberg-13}.

We add the adjective \emph{stationary} to indicate that the strategy
does not depend on $t\in\NN_0$, that is, $v_t=v$ for all $t\in\NN_0$.
We let $\Usm$ denote the class of stationary Markov randomized strategies,
henceforth referred to simply as \emph{stationary strategies}.

The basic structural hypotheses on the model, which are assumed
throughout the paper, are as follows.

\begin{assumption}\label{A2.1}
The following hold:
\begin{itemize}
\item[(i)]
The transition probability $P(\D{y}\,|\, x,u)$ is \emph{weakly continuous},
that is, the map $$(x,u)\,\mapsto\, H_f(x,u) := \int_\XX f(y)P(\D{y}\,|\, x,u)$$
is continuous for every $f\in C_b(\XX)$.
\item[(ii)]
The set-valued map $\cU\colon\XX\to\Borel(\Act)$ is upper semi-continuous
and closed-valued.
\item[(iii)]
The running cost $\rc\colon\KK\to[1,\infty]$ is lower semi-continuous.
\end{itemize}
\end{assumption}

\Cref{A2.1} is assumed throughout the paper, and repeated only for emphasis.
More specific assumptions are imposed later in \cref{S2.4} and \cref{S3}.

\begin{definition}\label{D2.1}
For $v\in\Usm$ we use the abbreviated notation
\begin{equation*}
P_v(A\,|\,x) \,\df\, \int_{\cU(x)} P(A\,|\,x,u)\,v(\D{u}\,|\,x)\,,
\quad\text{and\ \ }
\rc_v(x) \,\df\, \int_{\cU(x)}\rc(x,u)\,v(\D{u}\,|\,x)\,.
\end{equation*}
Also,
$P_v f(x) \df \int_\XX f(y) P_v(\D{y}\,|\,x)$ for a function $f\in\cM(\XX)$,
assuming that the integral is well defined.
Similarly, we write $P_u(A\,|\,x) \df P\bigl(A\,|\,x,u\bigr)$ for $u\in\cU(x)$,
and define $P_u f$ analogously.
When needed to avoid ambiguity, we denote the chain controlled
under $v$ as $\{X_n^v\}_{n\in\NN_0}$.
\end{definition}

\subsubsection{Control objective}\label{S2.1.1}

The control objective is to minimize over all admissible $\xi=\{\xi_n\}_{n\in\NN_0}$ the
average (or `\textit{ergodic}') cost
\begin{equation*}
\cE\bigl(\mu,\xi\bigr)\,\df\,\limsup_{N\to\infty}\,\frac{1}{N}\,
\Exp_\mu^\xi\Biggl[\sum_{n=0}^{N-1}\rc(X_n, U_n)\Biggr]\,,\quad\mu\in\Pm(\XX)\,,\ 
\xi\in\Uadm\,.
\end{equation*}
We let
\begin{equation}\label{E-betax}
J(\mu) \,\df\, \inf_{\xi\in\Uadm}\;\cE\bigl(\mu,\xi\bigr)\,,
\qquad\text{and\ \ } \beta\,\df\, \inf_{\mu\in\Pm(\XX)}\,J(\mu)\,.
\end{equation}
We say that an admissible strategy $\xi$
is \emph{optimal} if $\cE\bigl(\mu,\xi\bigr)=J(\mu)$ for all
$\mu\in\Pm(\XX)$.
The class of Markov stationary strategies that are optimal is
denoted by $\Usms$.

In the next section we introduce the concept of an
\emph{optimal ergodic occupation measure}, and assume that, under a near-monotone
type hypothesis on the running cost, such a measure exists.
We use this result in \cref{S3} to derive a solution to the Bellman equation.

\subsection{Existence of an optimal ergodic occupation measure}\label{S2.2}

Recall that $\zeta\in\Pm(\KK)$ is called an
\emph{ergodic occupation measure} if it satisfies
\begin{equation*}
\int_{\KK} \biggl(f(x) - \int_{\XX} f(y) P(\D{y}\,|\,x,u)\biggr)
\zeta(\D{x},\D{u}) \,=\, 0\quad\forall\,f\in\Cc_b(\XX)\,.
\end{equation*}
We let $\eom$ stand for the class of ergodic occupation measures.
Any $\zeta\in\eom$ can be disintegrated as
\begin{equation}\label{E-disint}
\zeta(\D{x},\D{u}) \,=\, \uppi_\zeta(\D{x})\, v_\zeta(\D{u}\,|\,x)
\quad \uppi_\zeta\text{-a.s.}\,,
\end{equation}
where $\uppi_\zeta\in\Pm(\XX)$ and $v_\zeta$ is
a stochastic kernel on
$\XX\times\Borel(\Act)$ which satisfies $v_\zeta(\cU(x)\,|\, x)=1$.
We denote this disintegration as $\zeta=\uppi_\zeta\circledast v_\zeta$.

\begin{remark}
Note that \cref{E-disint} does not define $v_\zeta$ on the entire space,
and thus $v_\zeta$ cannot be viewed as an element of $\Usm$.
However, if $v\in\Usm$ is any strategy that agrees
$\uppi_\zeta$-a.e.\ with $v_\zeta$, then
$\uppi_\zeta(\,\cdot\,) = \int_\XX P_v (\,\cdot\,|\,x)\,\uppi_\zeta(\D{x})$,
or, in other words, $\uppi_\zeta$ is an invariant probability measure for
the chain controlled under $v$.
Note that such a strategy can be easily constructed.
For example, for arbitrary $v_0\in\Usm$, we can define
$v=v_\zeta$ on the support of $\uppi_\zeta$ and $v=v_0$ on its complement.
\end{remark}

\begin{definition}\label{D2.2}
We say that $\zeta^\star\in\eom$ is optimal if
\begin{equation}\label{ED2.2A}
\int_\KK \rc\,\D\zeta^\star\,=\, \beta
\end{equation}
and denote the set of optimal ergodic occupation measures by $\eom^\star$.
\end{definition}

The convex analytic method introduced in \cite{Borkar-88} (see also \cite{Borkar-02})
is a powerful tool for the analysis of ergodic occupation measures.
Two main models have been considered: MDPs with a blanket stability property,
and MDPs with a \emph{near-monotone} running cost. Near-monotonicity is a structural assumption,
which, stated in simple terms, postulates that the running cost is strictly larger
than the optimal average value in \cref{E-betax} on the complement of some compact set.
More precisely, this assumption is stated as follows:

\smallskip
\begin{itemize}
\item[\hypertarget{C}{\textbf{(C)}}]
Consider a continuous one-one embedding $\Psi: \KK \to \KK^*$ of $\KK$ into  a Polish space $\KK^*$ such that $\overline{\Psi(\KK)}$ is compact in $\KK^*$. (Existence and examples of such embeddings follow.) By abuse of notation,  we identify $\KK$ with its image $\Psi(K)$  under this map and $\KK^*$ with $\overline{\Psi(\KK)}$. 
Furthermore, we assume that there exists a compact set $\widetilde{K}\subset\KK$ and a $\varepsilon_0 > 0$ such that 
\begin{equation}\label{E-C}
\rc(x,u) \,\ge\, \beta + \varepsilon_0
\qquad\forall (x,u)\in\KK\setminus
\widetilde{K}\,.
\end{equation}
This implies in particular that $\{(x_n,u_n)\} \subset \KK, (x_n,u_n) \to \partial \KK := \KK^*\backslash\KK$, then
\begin{equation}\label{nmonotone}
\liminf_{n\uparrow\infty}c(x_n,u_n) \ge \beta + \varepsilon_0.
\end{equation}
It will be convenient for us to take $\KK^*$ to be the closure of $\KK$ $(\approx \Psi(\KK))$ embedded in $\XX^*\times\Act^*$ where $\XX^*,\Act^*$ are resp., compact dense embeddings of $\XX, \Act$ into suitable Polish spaces, assumed to exist. We shall assume that this is so.
\end{itemize}

For MDPs on a countable state space a natural counterpart of assumption \hyperlink{C}{(C)}
 is enough to guarantee the existence
of an optimal ergodic occupation measure
as shown in \cite{Borkar-02}.
In \cref{T2.1}, we extend this result to MDPs on a Polish space
under \cref{A2.1} and the above assumption.

As an example, consider the case where the state space $\XX$ is locally compact
 and $\KK=\XX\times\Act$ for a compact action space $\Act$.
Suppose that a sequence $\{\widetilde\zeta_n\}_{n\in\NN}$ of
mean empirical measures converges vaguely to a positive measure $\mu\in\Pm(\KK)$,
meaning that $\int_\KK f\D\widetilde\zeta_n\to\int_\KK f\D\mu$
as $n\to\infty$ for all $f\in C_c(\KK)$, where
$C_c(\KK)$ denotes the subspace of $C_b(\XX)$ consisting
of functions with compact support.
A key lemma then asserts that $\mu(\KK) > 0$,
the normalized measure $\frac{\mu}{\mu(\KK)}$ on $\KK$ is an ergodic occupation measure.
This is established in \cite[Lemma~2.6]{Borkar-02} for models with a countable
state space, and the proof can be adapted to MDPs with
a locally compact state space.
An important ingredient in this proof is employing the Alexandroff extension,
commonly known as the one-point compactification,
and then applying Prokhorov's theorem to
the compactified space $\XX\cup\{\infty\}$.

The Alexandroff extension has a simple and geometrically meaningful structure,
but it does not result in a Hausdorff compactification unless the original space
is locally compact.
For models with general Polish state and action spaces, a general scheme that is always available is to employ Urysohn's theorem
to embed $\KK$ in the Hilbert cube, and use the closure of its image as
a compactification.
This is done as follows.

\begin{definition}[Embedding in the Hilbert cube]\label{D2.4}
As is well known,
$\KK$, being a Polish space, can be homeomorphically embedded
as a $G_{\delta}$ subset of the Hilbert cube $[0,1]^\infty$
by a homeomorphism $\Psi \colon\KK \leftrightarrow \Psi(\KK) \subset [0,1]^\infty$
\cite[Propositions~7.2 and 7.3]{BertsShreve}.
Thus we can identify  $\KK$ with $\Psi(\KK)$.
Let $\KK^* \df \overline{\Psi(\KK)}$, and view $\KK$ as being densely
homeomorphically embedded in $\KK^*$ with
$\partial\KK \df \KK^*\setminus\KK$.
We may view $\Pm(\KK)$ as being isometrically embedded in $\Pm(\KK^*)$ in the obvious manner.
The latter is compact by Prokhorov's theorem.
\end{definition}

 This may not always be convenient and one may use better problem-specific choices. As an example, consider $\KK :=$ a closed bounded subset of  $C_1[0,1]$, the space of continuous real-valued functions on $[0,1]$ which are continuously differentiable on  $(0,1)$ with left, resp.\ right limits at $0,1$, equipped with the norm $\|f\|_1 := \sup_{x\in [0,1]}|f(x)| + \sup_{x\in (0,1)}|f'(x)|$.  Its natural embedding into $C[0,1] :=$  the space of continuous real-valued functions on $[0,1]$ with the sup-norm, is compact and dense. Thus taking any bounded subset of $C_1[0,1]$ as state space with extended real valued cost $c(x,u) := \|f\|_* := \|f\|_1  + \sup_{x\in (0,1)}|f''(x)|$, satisfies the above conditions. Further examples can be constructed using compact embedding theorems for Sobolev and H$\ddot{o}$lder spaces such as the ones provided by the Rellich-Kondrachov theorem. Another example is a bounded subset of the space of probability measures on a euclidean space with finite $p$-th moment, $p \geq 1$, with the Wasserstein-$p$ distance, embedded in the space of all probability measures on the underlying space with Prohorov topology. A set of laws with uniformly bounded $p$-th moment is necessarily tight, hence the embedding is compact. Density follows easily. Yet another simple example is the natural embedding of the open unit ball $\{f : \|f\| < 1\}$ in $L_2[0,1]$ with norm topology with its natural embedding into $L_2[0,1]$ with the weak$^*$ topology.

Let $\tilde{F}$ to be the class of  functions
in $C_b(\XX^*)$.
The functions in $\tilde{F}$ can also be viewed as functions on $\KK^*$ by letting
$f(x,u)\equiv f(x)$ for $u\in\cU^*(x)$.
Abusing the notation, we use the same symbol $\tilde{F}$ to denote the pullback of the family
$\tilde{F}$ by the map $\Psi^{-1}$.
These are functions on $\Psi(\XX)$, that is,
$f(z)$ for $z\in\Psi(\XX)$ is identified with $f\bigl(\Psi^{-1}(z)\bigr)$.
Since $f(x,u)$ in the family $\tilde{F}\subset C_b(\KK^*)$
does not depend on $u$, abusing the notation,
we denote it simply as $f(x)$ whenever this is convenient. 


In the study of the average cost problem, empirical occupation measures
play an important role.
These are defined as follows.

\begin{definition}
For any given $\mu_\circ\in\Pm(\XX)$
and $\xi\in\Uadm$, we define the family of \emph{mean empirical measures}
$\bigl\{\widetilde\zeta_t \in \Pm(\KK)\,,\;t>0\bigr\}$ by:
\begin{equation*}
\int_{\KK}h(x,u)\,\widetilde\zeta_t(\D{x},\D{u}) \,\df\, \frac{1}{t}
\sum_{m=0}^{t-1}
\Exp_{\mu_\circ}^\xi\bigl[h(X_m,\xi_m)\bigr]\quad \forall \, h \in C_b(\KK)\,.
\end{equation*}
Naturally, $\widetilde\zeta_t$
 depends on $\mu_\circ$ and $\xi$, but we suppress this dependence in the notation.
\end{definition}


We state and prove a key lemma which is analogous to the one mentioned earlier
for the locally compact case. Consider a sequence $\bigl\{\widetilde\zeta_n\bigr\}_{n\in\NN}$ of mean empirical measures viewed as a sequence in $\Pm(\KK^*)$ using the embedding in Definition 2.3. By Prokhorov's theorem, moving to a subsequence if necessary,
also denoted as $\{\widetilde\zeta_n\}_{n\in\NN}$,
we have $\widetilde\zeta_n\Rightarrow\widehat\zeta$
for some $\widehat\zeta\in\Pm(\KK^*)$. Since $\KK^*$ is the disjoint union of $\KK$ and $\partial \KK$,
it is clear that $\widehat\zeta$ must be of the form
\begin{equation}\label{PT2.1C}
\widehat\zeta \,=\, a\zeta_0 + (1 - a)\zeta_1
\end{equation}
for some $a \in [0,1]$, $\zeta_0\in\Pm(\partial\KK)$,
and $\zeta_1\in \Pm(\KK)$.

\begin{lemma}\label{L2.1}
If $a < 1$, then $\zeta_1 \in \eom$. The same conclusion applies
for a sequence $\{\zeta_n\}_{n\in\NN}\subset\eom$.
\end{lemma}

\begin{proof}
Using the strong law of large numbers for martingales given by
\begin{equation*}
\frac{1}{t}\sum_{m=1}^t
\Bigl(f(X_m) - \Exp_{\mu_\circ}^\xi
\bigl[f(X_m)\,|\,X_{m-1},U_{m-1}\bigr]\Bigr)
\,\xrightarrow[t\to\infty]{}\, 0 \ \ \text{a.s.}
\end{equation*}
for $f\in C_b(\XX)$,
we obtain, upon taking expectations, that
\begin{equation}\label{PT2.1B}
\int_{\KK} \biggl(f(x) - \int_\XX f(y) P(\D{y}\,|\,x,u)\biggr)
\widetilde\zeta_t(\D{x},\D{u}) \,\xrightarrow[t\to\infty]{}\, 0\,.
\end{equation}

Then
\begin{equation}\label{PT2.1Cc}
\begin{aligned}
\lim_{n\to\infty}\,\int_{\KK} f\,\D\widetilde\zeta_n &\,=\, \lim_{n\to\infty}\,\int_{\KK^*} f\,\D\widetilde\zeta_n \,=\,
\int_{\KK^*} f\,\D\widehat\zeta \\[3pt]
&\,=\,
a \int_{\partial\KK} f\,\D\zeta_0 + (1-a) \int_{\KK} f\,\D\zeta_1\qquad
\forall f\in C_b(\KK^*)
\end{aligned}
\end{equation}
by the hypothesis that $\widetilde\zeta_n\Rightarrow\widehat\zeta$.
As shown in \cite[Theorem~4.5]{EthKurtz}, if $\mathcal{X}$ is Polish,
then any subset $\bm{F}\subset C_b(\mathcal{X})$  which
\emph{separates points} in $\mathcal{X}$
and is also an algebra is a separating class for Borel probability measures,
meaning that if $\mu',\mu''\in\Pm(\mathcal{X})$ satisfy
$\int f\D\mu'=\int f\D\mu''$ for all $f\in\bm{F}$ then
$\mu'=\mu''$.
The method that we use in this proof reduces the problem
of proving that $\zeta_1\in\eom$  to establishing
equality of two given measures in $\Pm(\XX)$.
Therefore, it suffices to continue with a class $\bm{F}$ that only separates
probability measures. By adding a constant to any $f\in\bm{F}$, we may suppose that each $f\in\bm{F}$ is bounded away from zero from below.  We begin with a special subclass of such $f$. Recall that given a compatible metric $d: \KK\times\KK \mapsto [0,1]$, and a countable dense set $\{s_n\}$ in $\KK$, we can homeomorhically embed $\KK$ into $[0,1]^\infty$ via the map $\Phi: s \in \KK \mapsto [(d(s,s_1), d(s,s_2), \cdots] \in \Phi(\KK) \subset [0,1]^\infty$ (See Definition \ref{D2.4}). Then for any $n\geq 1$, the set $\KK_n := \{[(d(s,s_1), d(s,s_2), \cdots , d(s,s_n)]\}$ is locally compact in the relative topology of $[0,1]^n$. Let $\KK_n^* = \KK_n\cup\{\infty\}$ denote its one point compactification. Consider $f$ above of the form $f(s) = g(d(s,s_1) , \cdots , d(s,s_n))$ for some $g: \KK_n^* \mapsto \RR$ vanishing at the point $\KK_n^*\backslash\KK_n$,  restricted to $\KK_n$. Then in the right hand side of (\ref{PT2.1Cc}), the first term is zero. Let $\mathcal{C}_n \subset C_b(\KK)$ denote the collections of such $f$, indexed by $n\geq 1$.

 Next, extend $H_f$ to $\KK^*$ by defining it to be $\liminf_{y'\in \KK,y'\to y}H_f(y')$ for $y\in\KK^*\backslash\KK$. By abuse of notation, we denote this extesnion by $H_f$ again. Note that $H_f$ is lower semicontinuous on $\KK^*$ by construction. Using Skorokhod's theorem, construct on some probability space $\KK^*$-valued random variables $\chi_n, n \geq 0,$ and  $\hat{\chi}$ such that the laws of $\chi_n$ (resp., $\hat{\chi}$) are $\widetilde\zeta_n$ (resp., $\widehat\zeta$) and $\chi_n \to \hat{\chi}$ a.s.


Then we have 
\begin{equation}\label{E02}
\begin{aligned}
\liminf_{n\to\infty}\,\int_{\KK}
\biggl(\int_{\XX} & f(y)\,P(\D{y}\smid x,u)\biggr)\,\widetilde\zeta_n(\D{x},\D{u})\\[5pt]
&\,=\, \liminf_{n\to\infty}\, \int_{\KK^*} 
H_f(x,u)\,\widetilde\zeta_n(\D{x},\D{u})\\[5pt]
&\,=\, \liminf_{n\to\infty}\,\Exp\bigl[H_f(\chi_n)\bigr]\\[5pt]
&\,\stackrel{(a)}{\ge}\,(1-a)\Exp\bigl[H_f(\hat{\chi})I_{\KK}\bigr] + a\Exp\bigl[H_f(\hat{\chi})I_{\KK^*\backslash\KK}\bigr] \\[5pt]
&\,\stackrel{(b)}{\ge}\,(1-a)\Exp\bigl[H_f(\hat{\chi})\bigr]\\[5pt]
&\,=\,(1-a)\int_{\KK} H_f(x,u)\,\zeta_1(\D{x},\D{u})\\[5pt]
&\,=\,
(1-a)\int_{\KK}
\biggl(\int_{\XX} f(y)\,P(\D{y}\smid x,u)\biggr)\,\zeta_1(\D{x},\D{u}),
\end{aligned}
\end{equation}
where `$(a)$' follows from the lower semicontinuity of $H_f$ and `$(b)$' follows from the fact that $H_f \geq 0$ on $\KK^*\backslash\KK$. Combining \cref{PT2.1B} with the above
and using  Fubini's theorem, we get
\begin{equation}\label{E04}
\int_{\KK} f(x)\,\zeta_1(\D{x},\D{u})\,\ge\,
\int_{\XX} f(y)\,\biggl(\int_{\KK} P(\D{y}\smid x,u)\,\zeta_1(\D{x},\D{u})\biggr)
\qquad\forall\,f\in \bm{F}\,.
\end{equation}
For $A\in\Borel(\XX)$, define
\begin{equation}\label{E-eta}
\begin{aligned}
\eta_1(A) &\,\df\,\int_{(A\times\Act)\cap\KK}\,\zeta_1(\D{x},\D{u})\,,\\[3pt]
\eta_2 (A) &\,\df\, \int_{\KK} P(A\smid x,u)\,\zeta_1(\D{x},\D{u})\,.
\end{aligned}
\end{equation}
Then, \cref{E04} can be written as
\begin{equation}\label{E06}
\int_{\XX} f(x)\,\eta_1(\D{x})\,\ge\,
\int_{\XX} f(y)\,\eta_2(\D{x})
\qquad\forall\,f\in \bm{F}\,.
\end{equation}
Such $f$ separate points of $\KK_n$ and therefore form a separating class for $\Pm(\KK_n)$. It follows that the set $\mathcal{C} := \cup_{n\geq 1}\mathcal{C}_n$ is a separating class for finite positive measures on $\KK$. Hence
\begin{equation}\label{E07}
\eta_1(B)\,\ge\, \eta_2(B)\,=\,
\int_{\KK} P(B\smid x,u)\,\zeta_1(\D{x},\D{u})
\qquad\forall\,B\in\Borel(\XX)\,.
\end{equation}
However, $\eta_1(\XX)=\eta_2(\XX)=1$ by \cref{E-eta}.
Thus equality must hold
in \cref{E07} for all $B\in\Borel(\XX)$, which means that
$\zeta_1\in\eom$ by the definition of $\eom$.

In the case of a sequence $\{\zeta_n\}_{n\in\NN}\subset\eom$ such that $\zeta_n\Rightarrow\widehat\zeta = a\zeta_0 + (1-a)\zeta_1$ as above,
observe that the left-hand side of \cref{PT2.1B} over
this sequence is identically equal to $0$ by the definition of
an ergodic occupation measure.
Thus, the proof of the statement is identical to the above.
\end{proof}

We continue by showing that \hyperlink{C}{\textup{(C)}} implies
the existence of an optimal ergodic occupation measure in the sense of \cref{D2.2}.

\begin{theorem}\label{T2.1}
Under \hyperlink{C}{\textup{(C)}}, we have $\eom^\star\ne\varnothing$.
In addition if $\zeta^\star\in\eom^\star$,
$\uppi_{\zeta^\star}\in\Pm(\XX)$ and $v_{\zeta^\star}$
satisfy \cref{E-disint}, and $\Hat{v}\in\Usm$ agrees
$\uppi_{\zeta^\star}$-a.e.\ 
with $v_{\zeta^\star}$, then
\begin{equation}\label{ET2.1A}
\lim_{N\to\infty}\,\frac{1}{N}\,
\Exp^{\Hat{v}}_x\Biggl[\sum_{n=0}^{N-1}\rc_{\Hat{v}}(X_n)\Biggr]\,=\,
J(x)\,
=\,\beta \quad\uppi_{\zeta^\star}\text{-a.e.\ }
\end{equation}
\end{theorem}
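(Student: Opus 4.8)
The plan is to prove both assertions by the convex-analytic method, adapting \cite{Borkar-02} from the countable to the Polish setting. For the existence claim, fix $\xi\in\Uadm$ with $\cE(\mu_\circ,\xi)<J(\mu_\circ)+\Tilde\varepsilon$ and form the mean empirical occupation measures $\zeta_N\df\frac1N\sum_{n=0}^{N-1}\Prob^\xi_{\mu_\circ}\circ(X_n,U_n)^{-1}\in\Pm(\KK)$, so that $\int_\KK\rc\,\D\zeta_N=\frac1N\Exp^\xi_{\mu_\circ}\bigl[\sum_{n=0}^{N-1}\rc(X_n,U_n)\bigr]$ has $\limsup$ at most $J(\mu_\circ)+\Tilde\varepsilon$, while $\int_\KK\bigl(f(x)-\int_\XX f(y)\,P(\D{y}\,|\,x,u)\bigr)\zeta_N(\D{x},\D{u})=\frac1N\Exp^\xi_{\mu_\circ}\bigl[f(X_0)-f(X_N)\bigr]\to0$ for every $f\in\Cc_b(\XX)$. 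From \cref{E-C} and $\rc\ge1$ we get $\int_\KK\rc\,\D\zeta_N\ge\zeta_N(\widetilde K)+\bigl(J(\mu_\circ)+2\Tilde\varepsilon\bigr)\bigl(1-\zeta_N(\widetilde K)\bigr)$, so $\zeta_N(\widetilde K)$ stays bounded below by a positive constant for large $N$. Passing to a subsequential limit $\widehat\zeta$ in a metrizable compactification of $\KK$, the restriction $\zeta$ of $\widehat\zeta$ to $\KK$ carries positive mass; using the weak continuity of $P$ in \cref{A2.1}\,(i) one verifies that $\uppi_\zeta P_{v_\zeta}\le\uppi_\zeta$, and then conservation of total mass upgrades this to $\uppi_\zeta=\uppi_\zeta P_{v_\zeta}$, so the normalization of $\zeta$ belongs to $\eom$; in particular $\eom\ne\varnothing$. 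Lower semicontinuity of $\rc$ bounds the cost of the limit by the non-escaped mass, while \cref{E-C} charges any escaped mass at rate at least $J(\mu_\circ)+2\Tilde\varepsilon$; comparing both quantities with $\int_\KK\rc\,\D\zeta_N$ forces $\beta<J(\mu_\circ)+2\Tilde\varepsilon<\infty$. Next, any $\zeta=\uppi_\zeta\circledast v_\zeta\in\eom$ gives, after completing $v_\zeta$ to some $v\in\Usm$ as in the remark following \cref{E-disint}, a stationary chain with invariant law $\uppi_\zeta$ and $\cE(\uppi_\zeta,v)=\uppi_\zeta(\rc_v)=\int_\KK\rc\,\D\zeta$, whence $J_\star\le\beta$; and running the same empirical-measure estimate from an arbitrary $\mu$ under an arbitrary $\xi$ — now invoking $\beta<J(\mu_\circ)+2\Tilde\varepsilon$ to see that escaped mass is charged at rate $>\beta$ and retained mass (through the $\eom$ membership of the normalized limit) at rate $\ge\beta$ — gives $\cE(\mu,\xi)\ge\beta$, hence $J_\star=\beta$. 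Finally, applying the compactification argument to a minimizing sequence $\zeta_n\in\eom$, $\int_\KK\rc\,\D\zeta_n\downarrow\beta$: if a fraction $1-m>0$ of mass escaped, the escaped part would contribute cost at least $(1-m)(J(\mu_\circ)+2\Tilde\varepsilon)$ and the normalized limit (in $\eom$) cost at least $m\beta$ on the retained part, contradicting $\int_\KK\rc\,\D\zeta_n\to\beta$ since $\beta<J(\mu_\circ)+2\Tilde\varepsilon$; so $m=1$ and the limit is a probability measure in $\eom$ of cost $\le\beta$, i.e.\ an element of $\eom^\star$.

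\emph{The ergodic average under $\Hat v$.} Put $\uppi^\star\df\uppi_{\zeta^\star}$ and let $\Hat v\in\Usm$ agree with $v_{\zeta^\star}$ $\uppi^\star$-a.e. By the remark after \cref{E-disint}, $\uppi^\star$ is invariant for $P_{\Hat v}$, and $\uppi^\star(\rc_{\Hat v})=\int_\KK\rc\,\D\zeta^\star=\beta<\infty$, so $\rc_{\Hat v}\in L^1(\uppi^\star)$. Since $P_{\Hat v}$ is a Markov operator leaving $\uppi^\star$ invariant, it is simultaneously an $L^1(\uppi^\star)$- and an $L^\infty$-contraction, so by the Dunford--Schwartz pointwise ergodic theorem the Cesàro averages $\frac1N\Exp^{\Hat v}_x\bigl[\sum_{n=0}^{N-1}\rc_{\Hat v}(X_n)\bigr]=\frac1N\sum_{n=0}^{N-1}\bigl(P_{\Hat v}^n\rc_{\Hat v}\bigr)(x)$ converge, for $\uppi^\star$-a.e.\ $x$ and in $L^1(\uppi^\star)$, to a $P_{\Hat v}$-invariant function $g$ with $\uppi^\star(g)=\beta$. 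To identify $g$, suppose the invariant set $B\df\{g<\beta\}$ had $\uppi^\star(B)>0$; then $\uppi_B\df\uppi^\star(\,\cdot\cap B)/\uppi^\star(B)$ is again invariant for $P_{\Hat v}$, so $\uppi_B\circledast\Hat v\in\eom$ and $\uppi_B(\rc_{\Hat v})\ge\beta$, whereas invariance of $\uppi_B$ and the $L^1$-convergence of the averages give $\uppi_B(\rc_{\Hat v})=\uppi_B(g)<\beta$ (as $g<\beta$ on $B$) — a contradiction. Hence $g\ge\beta$ $\uppi^\star$-a.e., and since $\uppi^\star(g)=\beta$ we get $g\equiv\beta$ $\uppi^\star$-a.e. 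Therefore $\frac1N\Exp^{\Hat v}_x\bigl[\sum_{n=0}^{N-1}\rc_{\Hat v}(X_n)\bigr]\to\beta$ for $\uppi^\star$-a.e.\ $x$, so $\cE(\delta_x,\Hat v)=\beta=J_\star$, while $J_\star\le J(x)\le\cE(\delta_x,\Hat v)$ forces $J(x)=\beta$; this is \cref{ET2.1A}.

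The step I expect to be the crux is the tightness/leakage analysis in the first part. Because $\Act$ need not be compact and $\rc$ is only lower semicontinuous, sublevel sets of $\rc$ need not be compact and bounded-cost sequences in $\eom$ are not tight; one must work in a compactification, control the mass at infinity \emph{quantitatively} through the precise form of \cref{E-C}, and verify that the $\KK$-part of a limit still satisfies the defining identity of $\eom$ — which is exactly where \cref{A2.1}\,(i) and the near-monotone gap $\beta<J(\mu_\circ)+2\Tilde\varepsilon$ enter essentially. A secondary point is obtaining pointwise (rather than merely $L^1$) convergence of the Cesàro averages in the second part, for which the Dunford--Schwartz theorem is the right tool.
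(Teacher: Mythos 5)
Your proposal is correct and follows the same convex-analytic skeleton as the paper: mean empirical occupation measures, vanishing drift via the martingale strong law, compactification of $\KK$, and quantitative control of the mass escaping to $\partial\KK$ through the near-monotone gap in \cref{E-C}, both for constructing an element of $\eom$ with cost at most $J(\mu_\circ)$ and for extracting a minimizer from a minimizing sequence in $\eom$. Two local differences are worth recording. First, where you keep mass from escaping by the direct inequality $\int\rc\,\D\zeta_N\ge\zeta_N(\widetilde K)+(J(\mu_\circ)+2\Tilde\varepsilon)(1-\zeta_N(\widetilde K))$ and then invoke lower semicontinuity "by hand," the paper builds the truncated extensions $c_n\in\fL(\KK^*)$ of \cref{E-lsc} and verifies their lower semicontinuity on the compactification using \cref{A2.1}\,(ii)--(iii); this is exactly the technical device that makes your "escaped mass is charged at rate $\ge J(\mu_\circ)+2\Tilde\varepsilon$" step rigorous, and you correctly identified it as the crux even though you did not carry it out. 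Second, for \cref{ET2.1A} the paper simply cites Birkhoff's ergodic theorem to get \cref{PT2.1H}; since $\uppi_{\zeta^\star}$ need not be ergodic, Birkhoff by itself only yields convergence to a $P_{\Hat v}$-invariant function with mean $\beta$, and your Dunford--Schwartz argument together with the restriction of $\uppi_{\zeta^\star}$ to the invariant set $\{g<\beta\}$ (which would produce an ergodic occupation measure of cost below $\beta$) is precisely the extra step needed to conclude that the limit is the constant $\beta$; this is a more careful treatment than the paper's. The remaining pieces ($J_\star=\beta$ via $\beta\le J(\mu)$ for any $\mu$ with $J(\mu)\le J(\mu_\circ)$, and $J(x)=\beta$ $\uppi_{\zeta^\star}$-a.e.) match the paper's \cref{PT2.1H,PT2.1I}.
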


\begin{proof}
Let $\{\zeta_k\}_{k\in\NN}$ be such that
$\int\rc\,\D\zeta_k\searrow\beta$ as $k\to\infty$.
We select
a subsequence such that $\zeta_k\Rightarrow\widehat\zeta\in\Pm(\KK^*)$,
and write $\widehat\zeta=a\zeta'+(1-a)\zeta''$, with $a\in[0,1]$, $\zeta'\in\Pm(\partial\KK^*)$
and $\zeta''\in\Pm(\KK)$.

Since $c$ is lower semi-continuous on $\KK$, there exists a sequence $c_n \in C_b(\KK)$ such that $c_n \uparrow c$ pointwise. Choose a $n_0 \geq 1$ such that 
for $n \ge n_0$, $\{x_m\} \subset \KK, x_m \to \partial \KK := \KK^*\backslash\KK$, then
$$\liminf_{k\uparrow\infty}\inf_uc_n(x_k,u)  > \beta + 2\Tilde\varepsilon$$
for some  $\Tilde\varepsilon > 0$. Then we have
\begin{eqnarray}\label{PT2.1G}
\beta &\ge& \liminf_{k\to\infty}\,
\int_{\KK^*}\rc\, \D\zeta_k \,\ge\,\liminf_{k\to\infty}\,
\int_{\KK^*}\rc_n\, \D\zeta_k\, \nonumber \\
&\ge& a\int c_nd\zeta' + (1-a)\int c_nd\zeta" \,\ge\,
a (\beta + \Tilde\varepsilon) + (1-a) \int_{\KK}\rc_n\, \D\zeta''\,.
\end{eqnarray}
for all $n\ge n_0$.  By the above lemma, $\zeta'' \in \eom$, implying $\int_{\KK}\rc\, \D\zeta''\ge\beta$.
Letting $n\to\infty$ in (\ref{PT2.1G}), we obtain
\begin{equation*}
\begin{aligned}
\beta &\,\ge\,
a (\beta + \Tilde\varepsilon) + (1-a) \int_{\KK}\rc\, \D\zeta''\\[3pt]
&\,\ge\, a (\beta + \Tilde\varepsilon) +  (1-a) \beta\,.
\end{aligned}
\end{equation*}
This shows that $a=0$ and $\int_{\KK}\rc\, \D\zeta''=\beta$.
Therefore $\{\zeta_k\}$ are tight and  $\zeta''\in\eom^\star$, hence $\eom^\star \neq \phi$.

It remains to establish \cref{ET2.1A}.
If $\zeta^\star=\uppi_{\zeta^\star}\circledast v_{\zeta^\star}\in\eom^\star$
and $\Hat{v}\in\Usm$ agrees $\uppi_{\zeta^\star}$-a.e.\
with $v_{\zeta^\star}$, then an application of
Birkhoff's ergodic theorem  shows that
\begin{equation}\label{PT2.1H}
\beta\,=\,\int_\KK c\,\D\zeta^\star \,=\, \lim_{N\to\infty}\,\frac{1}{N}\,
\Exp^{\Hat{v}}_x\Biggl[\sum_{n=0}^{N-1}\rc_{\Hat{v}}(X_n)\Biggr]
\quad\uppi_{\zeta^\star}\text{-a.e.\ }
\end{equation}
This completes the proof.
\end{proof}

\begin{remark}
The pair $(\Hat{v},\uppi_{\zeta^\star})$ in \cref{T2.1} is a
stationary \emph{minimum pair}
in the sense of \cite[Definition~2.2]{Yu-20}
(see also \cite{Yu-19}).
It is worthwhile comparing the assumptions in \cite{Yu-20} to the ones in
this paper. 
In \cite{Yu-20} the state space $\XX$ is Borel, $\Act$ is countable,
and $\KK$ is a Borel subset of $\XX\times\Act$.
Existence of a stationary minimum pair is established under
the assumption that  $c$ is strictly unbounded
and the transition kernel satisfies a majorization condition.
The latter involves
weak continuity of the kernel $P$ and lower semi-continuity of
$c$ when these are restricted to $D\times\Act$, where $D\subset\XX$
is a closed set that appears in the majorization condition
\cite[Assumption~3.1]{Yu-20}.

By comparison, we allow $\Act$ to be Polish, the running
cost satisfies (C) and is not necessarily strictly
unbounded, and we don't need a majorization condition.
However, we assume
that $\XX$ is Polish, that $\cU$ is upper semicontinuous,
weak continuity of $P$ and lower semi-continuity of $c$ on
$\KK$,
which are more restrictive than \cite[Assumption~3.1]{Yu-20}.
\end{remark}

\subsection{Discussion}

To guide the reader in the approach we follow to establish
the Bellman equation and the existence of an optimal stationary Markov policy,
we review the case of an MDP on a countable state space with compact
action space under the near monotone hypothesis \cite{Borkar-91}.
Let the state space be $\NN_0\df\{0,1,2,\dotsc\}$,
and $\cU(x)=\Act$ for all $x\in\NN_0$. Suppose the state $0$ is reachable with positive probability from every other state under some control.
Under the near-monotone hypothesis in \hyperlink{C}{\textup{(C)}},
we obtain an optimal ergodic occupation measure
$\zeta^\star = \uppi_{\zeta^\star}\circledast v_{\zeta^\star}$.
Let $K\subset\NN_0$ denote the support of $\uppi_{\zeta^\star}$.
Then necessarily,  $0\in K$.
Then $v_{\zeta^\star}$ is defined on $K$ via the disintegration of $\zeta^\star$,
and thus the Markov chain `controlled' by $v_{\zeta^\star}$ is well defined when
restricted to $K$.
We would like to extend $v_{\zeta^\star}$ to some policy $v_\star\in\Usm$ which is
optimal in the sense of the definition in \cref{S2.1.1}.
Let $\uptau_A$ denote the first return time to a set $A$, defined by
\begin{equation*}
\uptau_A\,\df\, \min\,\{n\ge1\colon\, X_n\in A\}\,.
\end{equation*}
Let $\uptau_0 := \uptau_{\{0\}}$. A key observation is that $v_{\zeta^\star}$ satisfies
\begin{equation}\label{E-dscrA}
\Exp^{v_{\zeta^\star}}_x\Biggl[\sum_{n=0}^{\uptau_0-1}
\bigl(\rc_{v_{\zeta^\star}}(X_n)-\beta\bigr)\Biggr]
\,=\, \inf_{v\in\Usm}\,
\Exp^{v}_x\Biggl[\sum_{n=0}^{\uptau_0-1}\bigl(\rc_{v}(X_n)-\beta\bigr)\Biggr]
\qquad\forall\, x\in K\,.
\end{equation}
This can be shown by following the proof of \cref{L3.1} which establishes
an analogous result for the  model in this paper.
Therefore, any Markov control that arises from the disintegration of
an optimal ergodic occupation measure attains the infimum on the right-hand side of \cref{E-dscrA}
for $x\in K$.
Let
\begin{equation}\label{E-dscrB}
V(x)\,\df\,
\inf_{v\in\Usm}\,
\Exp^{v}_x\Biggl[\sum_{n=0}^{\uptau_0-1}\bigl(\rc_{v}(X_n)-\beta\bigr)\Biggr]\,,
\qquad x\in \NN_0\,.
\end{equation}
with $\beta$ as in \cref{ED2.2A}, and suppose that the
right-hand side of \cref{E-dscrB} is finite for all $x\in\NN$.
Then it is straightforward to show, using a one step analysis, that
$V$ satisfies
\begin{equation*}
V(x)\,=\,\min_{u\in\Act}\;\biggl[\rc(x,u)-\beta
+\sum_{y\in\NN_0\backslash\{0\}} V(y)\,P(y\,|\,x,u)\biggr]\qquad \forall\, x\in\NN\,,
\end{equation*}
in other words, we have the Bellman equation on the entire state space
except possibly at $x=0$.
Now, since $\beta$ is the ergodic value, we have
\begin{equation*}
\Exp^{v}_0\Biggl[\sum_{n=0}^{\uptau_0-1}\bigl(\rc_{v}(X_n)-\beta\bigr)\Biggr]\,\ge\,0\,,
\end{equation*}
with equality when $v=v_{\zeta^\star}$.
In particular, $V(0)=0$.
But this shows that the Bellman equation also holds for $x=0$.
One crucial step in this derivation is the finiteness of the right-hand side of
\cref{E-dscrB}.
Since $c-\beta\ge0$ on the complement of a finite set
by the near-monotone hypothesis, it is easy to show that
it suffices to assume that there exists some $v\in\Usm$ which satisfies
\begin{equation}\label{E-dscrE}
\Exp^{v}_x\Biggl[\sum_{n=0}^{\uptau_0-1}\rc_{v}(X_n)\Biggr]
\,<\,\infty\qquad\forall x\in\NN\,.
\end{equation}
The fact that $0$ is an atom  plays of course an important role in showing that
the Bellman equation is satisfied at $x=0$.
For the model in this paper, we circumvent this difficulty
by imposing a suitable hypothesis and adopting the
splitting method introduced by Athreya--Ney and Nummelin.
This is discussed in the next subsection.

\subsection{The split-chain and the pseudo-atom}\label{S2.4}

We introduce here the notions of the split chain and pseudo-atom,
originally due to Athreya and Ney \cite{Athreya-78},
and Nummelin \cite{Nummelin-78} for uncontrolled Markov chains.
We follow the treatment of \cite[Section~8.4]{ABG12}.
See \cite{Meyn-09} for an extended treatment, albeit in the uncontrolled framework.

The basic assumption concerns the existence of a $1$-small set
which is compatible with the near-monotonicity condition \hyperlink{C}{(C)}.
More precisely, the transition probability $P$ is assumed to satisfy the
following minorization hypothesis.
\begin{itemize}
\item[\hypertarget{A0}{\textbf{(A0)}}]
There exists a bounded set
 $\cB\subset\XX$ which satisfies
\begin{equation*}
\inf_{(x,u)\in(\cB^c\times\Act)\cap\KK}\;\rc(x,u)\,>\,\beta\,,
\end{equation*}
such that
for some measure $\nu\in\Pm(\XX)$ with $\nu(\cB)=1$, and a constant
$\delta > 0$, we have
$P(A\,|\,x,\cdot\,)\ge\delta\nu(A)\Ind_{\cB}(x)$ for all $A\in\Borel(\XX)$  and $\sum_{n\geq1}P(X_n\in \cB) > 0$ under all $v\in\Usm$.
Here, $\beta$ is as defined in \cref{ED2.2A}.
\end{itemize}

\begin{remark}
If $\XX=\Rd$ and the transition kernel has a continuous density $\varphi$,
then a necessary and sufficient condition
for the minorization condition in \hyperlink{A0}{(A0)} is that the function
$\varGamma\colon\cB\to\RR_+$
defined by
$$\varGamma(y) \,\df\, \inf_{(x,u)\in(\cB\times\Act)\cap\KK}\,\varphi(y\,|\,x,u)$$
is not equal to $0$ $\nu$-a.e.
In particular, if the density $\varphi$ is strictly positive,
then \hyperlink{A0}{(A0)} is automatically satisfied.
\end{remark}

\begin{definition}[Pseudo-atom]\label{D2.5}
Let 
\begin{equation*}
\cX \,\df\, (\XX\times\{0\})\cup(\cB\times\{1\})
\end{equation*}
and $\Borel(\cX)$ denote its Borel $\sigma$-algebra.
For a probability measure $\mu\in\Pm(\XX)$ we define
the corresponding probability measure $\Breve\mu$ on
$\Borel(\cX)$ by
\begin{equation}\label{ED2.5A}
\begin{aligned}
\Breve\mu(A\times\{0\}) &\,\df\, (1-\delta)\mu(A\cap\cB)
+\mu(A\cap\cB^c)\,,\quad A\in\Borel(\XX)\,,\\[3pt]
\Breve\mu(A\times\{1\}) &\,\df\,\delta\mu(A)\,,\quad A\in\Borel(\cB)\,.
\end{aligned}
\end{equation}
Let $\Breve\cB\df\cB\times\{1\}$, and refer to it as the \emph{pseudo-atom}.
\end{definition}

\begin{definition}[Split chain]\label{D2.6}
Given the controlled Markov chain
$\bigl(\XX,\Act,\cU,P,\rc\bigr)$ as described in \cref{S2},
we define the corresponding \emph{split chain}
$(\cX,\Act,\cU,Q,\Breve\rc)$,
with state space $\cX$, and transition kernel given by
\begin{equation}\label{E-Q}
Q(\D{y}\,|\, (x,i), u)\df\begin{cases}
\Breve{P}(\D{y}\,|\, x,u) &\text{if }(x,i)\in (\XX\times\{0\})\setminus(\cB\times\{0\})\,,
\\[5pt]
\frac{1}{1-\delta}\bigl(\Breve{P}(\D{y}\,|\, x,u) -\delta\Breve\nu(\D{y})\bigr) &
\text{if }(x,i)\in\cB\times\{0\}\,,\\[5pt]
\Breve\nu(\D{y}) &\text{if }(x,i)\in\cB\times\{1\}\,.
\end{cases}
\end{equation}
The running cost $\Breve\rc$ is defined in \cref{S4.4}.
\end{definition}

Using \cref{D2.5,E-Q}, the kernel $Q$ of the split chain can be expressed as follows:
\begin{equation}\label{E-Q1}
\begin{aligned}
Q(A\times\{0\}\,|\, (x,0), u)&\,\df\,
\bigl[P(A\cap\cB\,|\, x,u) -\delta\nu(A\cap\cB)
+\tfrac{1}{1-\delta}\,P(A\cap\cB^c\,|\, x,u)\bigr]\,\Ind_{\cB}(x)\\[4pt]
&\mspace{90mu}
+ \bigl[(1-\delta)P(A\cap\cB\,|\, x,u) + P(A\cap\cB^c\,|\, x,u)\bigr]
\,\Ind_{\cB^c}(x)
\end{aligned}
\end{equation}
for $A\in\Borel(\XX)$,
\begin{equation}\label{E-Q2}
Q(A\times\{1\}\,|\, (x,0), u)\,\df\,
\tfrac{\delta}{1-\delta}\bigl(P(A\,|\,x,u) -\delta\nu(A)\bigr)
\,\Ind_{\cB}(x)
+ \delta P(A\,|\, x,u)\,\Ind_{\cB^c}(x)
\end{equation}
for $A\in\Borel(\cB)$, and for $x \in \cB$,
\begin{equation}\label{E-Q3}
\begin{aligned}
Q(\D{y}\times\{0\}\,|\, (x,1), u)&\,\df\,
(1-\delta)\,\nu(\D{y})
\,,\\[5pt]
Q(\D{y}\times\{1\}\,|\, (x,1), u)&\,\df\,
\delta\,\nu(\D{y})\,.
\end{aligned}
\end{equation}
Note that $\cB^c\times\{1\}$ is not visited.

Given an initial distribution $\mu_0$ of $X_0$, the corresponding initial
distribution $\Breve\mu_0$ of the split chain is determined according to
\cref{ED2.5A}.
We let $\Breve X_n=(X_n, i_n)\in\cX$ denote the state process of the split chain.

Next we define an equivalent running cost for the split chain. 
Consider a function $\Breve\rc\colon\cX\times\Act\to\RR$ satisfying
\begin{equation*}
\begin{aligned}
\Breve\rc\bigl((x,0), u\bigr) &\,=\, \rc(x,u)\,,\qquad
(x,u)\in (\cB^c\times\Act)\cap\KK\,,\\[3pt]
\delta\Breve\rc\bigl((x,1), u\bigr)+
(1-\delta)\Breve\rc\bigl((x,0), u\bigr) &\,=\, \rc(x,u)\,,\qquad
(x,u)\in (\cB\times\Act)\cap\KK\,,
\end{aligned}
\end{equation*}
with $\Breve\rc\bigl((x,1), u\bigr)$ not depending on $u$.

Let $\Breve\Pm(\cX)$ denote the class of probability measures $\Breve\mu$
on $\Borel(\cX)$ which satisfy
$(1-\delta)\Breve\mu(A\times\{1\})=\delta\Breve\mu(A\times\{0\})$
for all $A\in\Borel(\cB)$.
It follows by \cref{ED2.6E}, that for any initial $\mu_0\in\Pm(\cX)$,
we have $\Breve\mu_0 Q_u\in\Breve\Pm(\cX)$.
In other words,  $\Breve\Pm(\cX)$ is invariant under the action of $Q$.
This property implies that
\begin{equation*}
\Breve\Exp^\xi_{\Breve\mu_0}\biggl[\sum_{n=0}^{N-1}
\Breve\rc(\Breve{X}_n, U_n)\biggr] \,=\,
\Exp^\xi_{\mu_0}\Biggl[\sum_{n=0}^{N-1}\rc(X_n, U_n)\Biggr]
\quad \forall\,\xi\in\Uadm\,.
\end{equation*}
In particular, the ergodic control problem of the split
chain under the cost-per-stage function $\Breve\rc$ is equivalent
to the original ergodic control problem.

With the above property in mind, we introduce the following definition.
\begin{definition}\label{D2.7}
We define the cost-per-stage function
$\Breve\rc\colon\cX\times\Act\to\RR$
for the split chain by
\begin{equation}\label{ED2.7A}
\begin{aligned}
\Breve\rc\bigl((x,0), u\bigr)&\,\df\,
\begin{cases}\rc(x,u) &\text{for all }x\in (\cB^c\times\Act)\cap\KK\,,\\[3pt]
\frac{\rc(x,u)}{1-\delta} &\text{for all }x\in (\cB\times\Act)\cap\KK\,,
\end{cases}\\[3pt]
\Breve\rc\bigl((x,1), u\bigr) &\,\df\, 0\qquad\forall\,x\in\cB\,.
\end{aligned}
\end{equation}
For $v\in\Usm$, we let $\Breve\rc_v$ be as in \cref{D2.1} with $\rc(\cdot)$ replaced by $\Breve\rc(\cdot)$.
\end{definition}

An equivalent description of the split chain is as follows. Let $\{\xi_n\}$ denote the control process. 
\begin{enumerate}

\item If $X_n = x \in \cB, \xi_n = u$ and $i_n = 0$, then $X_{n+1} = y$ according to the transition probability
$$\frac{1}{1-\delta}\bigl(P(\D{y}\,|\, x,u) -\delta\nu(\D{y})\bigr).$$
Furthermore, if $y \in \cB$, then $i_{n+1} = 1$ with probability $\delta$ and $= 0$ with probability $1-\delta$.

\item If  $X_n = x \in \cB$ and $i_n = 1$, then $X_{n+1} = y \in \cB$ with probability $\nu(dy)$ and $i_{n+1} = 1$ with probability $\delta$ and $= 0$ with probability $1-\delta$.

\item If $X_n = x \notin \cB$ and $i_n = 0$, then $X_{n+1} = y$ according to $P(dy|x,u)$ and $i_{n+1}$ is as in (1) above.

\item The set $\cB^c\times\{1\}$ is never visited.

\end{enumerate}

This gives a causal description of the split chain. We dub the control $\xi = \{\xi_n\}$ as an admissible control. Intuitively, it can depend at time $n$ on the past history till $n$, i.e., on $(X_m, i_m), m \leq n, \xi_k, k < n$, and in addition, on any extraneous randomization conditionally independent of the `future' $(X_m,i_m), \xi_m, m > n,$ given the history till $n$. 

It is clear that an admissible strategy $\xi\in\Uadm$,
or a  Markov randomized strategy $v=\{v_t\}_{t\in\NN_0}$
maps in a natural manner to a corresponding control for the split chain, which is also
denoted as $\xi$ or $v$, respectively.
We use the symbols,  $\Breve\Exp^\xi_{(x,i)}, \Breve\Exp^v_{(x,i)}$ to denote the expectation operator
on the path space of the split chain controlled under $\xi\in\Uadm, v \in \Uadm_{sm}$ resp., and adopt
the analogous notation as in \cref{D2.1}, e.g., $Q_v$ and
$\{\Breve X_n^v\}_{n\in\NN_0}$.
In addition, we let
\begin{equation}\label{E-uuptau}
\uuptau\,\df\,\min\{n \ge 1\colon\Breve{X}_n\in\Breve\cB\}\,,
\end{equation}
that is, the first return time to $\Breve\cB\df\cB\times\{1\}$.

Let
\begin{equation}\label{E-delta0}
\delta_\circ \,\df\,
\frac{1-\delta}{\delta}\,
\biggl(\inf_{(x,u)\in(\cB\times\Act)\cap\KK}\;P(\cB\,|\,x,u) - \delta\biggr)^{-1}\,.
\end{equation}
Since $\delta>0$ in \hyperlink{A0}{(A0)}
can always be chosen so that $(x,u)\mapsto P(\cB\,|\,x,u) - \delta$
is strictly positive on $(\cB\times\Act)\cap\KK$, we may assume that $\delta_\circ$ is a
(finite) positive constant.
We have the following simple lemma.

\begin{lemma}\label{L2.2}
For any $v\in\Usm$ it holds that
\begin{equation*}
\Breve\Exp^v_{(x,0)}\Biggl[\sum_{k=0}^{\uuptau-1} \Ind_{\cB\times\{0\}}
(\Breve{X}_k)\Biggr] \,\le\, \delta_\circ\,.
\end{equation*}
\end{lemma}

\begin{proof}
This follows directly from the fact that
$Q(\cB\times\{1\}\,|\, (x,0), u)\ge\delta_\circ^{-1}$ for $x\in\cB$ by
\cref{E-delta0,E-Q2}.
\end{proof}

Let $\mu_0$ be an initial distribution of $\{X_n\}_{n\in\NN_0}$.
Adopting the notation $Q_u(\,\cdot\,|\, z) = Q(\,\cdot\,|\, z,u)$ for $z\in\cX$,
an easy calculation using \cref{D2.6} shows that
$\Breve\mu_0 Q_u (\cdot) \df \int_\cX Q_u(\,\cdot\,|\, z)\,\Breve\mu_0(\D z)$ is given by
\begin{equation}\label{ED2.6E}
\begin{aligned}
\Breve\mu_0 Q_u (A\times\{0\})
&\,=\, \int_\XX\bigl[(1-\delta) P(A\cap\cB\,|\, x,u)
+ P(A\cap\cB^c\,|\, x,u)\bigr] \mu_0(\D{x})\,,\qquad A\in\Borel(\XX)\,,\\
\Breve\mu_0 Q_u (A\times\{1\})
&\,=\, \int_\XX \delta P(A\cap\cB\,|\, x,u) \mu_0(\D{x})\,,\qquad A\in\Borel(\cB)\,.
\end{aligned}
\end{equation}
It is important to note, as seen by \cref{ED2.6E},
 that the marginal of the law of $(\Breve{X}_n, U_n)$,
$n\ge 0$, on $(\KK)^{\infty}$ coincides with the law of $(X_n, U_n)$,
$n\ge 0$, but the split chain has a pseudo-atom $\cB\times\{1\}$ with many
desirable properties that will become apparent in the next section
(see \cite[Theorem~8.4.1, p.~289]{ABG12} and \cites{Athreya-78,Nummelin-78}).

\subsection{Some basic notions}

We now recall some standard background from the theory of
Markov chains on a general state space,
see, e.g., \cite{Meyn-09} for a more detailed treatment.
For $v\in\Usm$ we define the \emph{resolvent} $\cR_v$ by
\begin{equation*}
\cR_v(x,A) \,\df\,  \sum_{n=1}^\infty 2^{-n}\,P^n_v(x,A)\,.
\end{equation*}
Consider the chain $\{X_n\}_{n\ge0}$ controlled by $v\in\Usm$.
Recall that a measure $\psi$ on $\Borel(\XX)$ is called a
\emph{(maximal) irreducibility measure} for the chain if
$\psi$ is
absolutely continuous with respect to $\cR_v(x,\cdot)$ for all $x\in\XX$
(and $\psi$ is maximal among such measures).
In turn, the chain itself is said to be $\psi$-irreducible.
Let $\Borel^+(\XX)$ denote the class of Borel sets $A$ satisfying
$\psi(A)>0$.
Let $\uptau_A$ denote the first return time to a set $A$, defined by
\begin{equation*}
\uptau_A\,\df\, \min\,\{n\ge1\colon\, X_n\in A\}\,.
\end{equation*}
For a $\psi$-irreducible chain, a set $C$ is \emph{petite} if
there exists a positive constant $c$ such that
$\cR_v(x,A)\ge c\widehat{\psi}(A)$
every $A\in\Borel(\XX)$ and $x\in C$,  and some finite positive measure $\widehat{\psi}$ equivalent to $\psi$.
Recall also that a $\psi$-irreducible chain is called \emph{Harris}
if $\Prob_x(\uptau_A<\infty)=1$ for every $A\in\Borel^+(\XX)$ and $x\in\XX$,
and it is called \emph{positive Harris} if it admits an invariant probability
measure.

Let $f\colon \XX\to[1,\infty)$ be a measurable map.
For a $\psi$-irreducible chain, a set $D\in\Borel(\XX)$ is called
$f$-regular \cite{Meyn-09} if
\begin{equation*}
\sup_{x\in D}\;
\Exp_x\Biggl[\sum_{n=0}^{\uptau_A-1}f(X_n)\Biggr]\,<\,\infty\qquad
\forall\,A\in\Borel^+(\XX)\,.
\end{equation*}
If there is countable cover of $\XX$ by $f$-regular sets,
then the chain is called $f$-regular.
An $f$-regular chain is always positive Harris with a unique invariant
probability measure $\uppi$ and satisfies
\begin{equation*}
\lim_{N\to\infty}\;\frac{1}{N}\;\sum_{n=0}^{N-1}\Exp_x\bigl[
f(X_n)\bigr]\,=\, \uppi(f) \,\df\, \int_\XX f(x)\,\uppi(\D{x})\qquad\forall\,x\in\XX\,.
\end{equation*}

\section{The Bellman equation}\label{S3}

In view of the definitions of the preceding section, we lift the
control problem in \cref{S2.1.1} to an an equivalent problem
on the controlled split chain
$(\cX,\Act,\cU(x),Q,\Breve\rc)$ described in \cref{D2.6,D2.7}.
In other words, we seek to minimize over all admissible $\xi\in\Uadm$
the cost
\begin{equation*}
\limsup_{N\to\infty}\;\frac{1}{N}\;
\Breve\Exp_{(x,i)}^\xi\biggl[\sum_{n=0}^{N-1}
\Breve\rc(\Breve{X}_n, U_n)\biggr]\,.
\end{equation*}

\subsection{Two assumptions}\label{S3.1}

We need two additional assumptions.
To state the first,
we borrow the notion of $\cK$-inf-compactness from \cite{Feinberg-13}.
Recall that a function $f\colon S\to\RR$, where $S$ is a topological space
is called inf-compact (on $S$), if the set
$\{x\in S\colon f(x) \le \kappa\}$ (possibly empty) is compact in $S$
for all $\kappa\in\RR$.
A function $f\colon\KK\to\RR$ is called $\cK$-inf-compact if
for every compact set $K\subset\XX$ the function is inf-compact
on $(K\times\Act)\cap\KK$.

The first assumption
is a structural hypothesis on the running cost
and is stated as follows:
\begin{itemize}
\item[\hypertarget{A1}{\textbf{(A1)}}]
One of the following holds.
\begin{itemize}
\item[(i)]
For some $x\in\XX$, we have $J(x)<\infty$, and
the running cost $\rc$ is inf-compact on $\KK$.
\item[(ii)]
The running cost $\rc$ is $\cK$-inf-compact
and \hyperlink{C}{\textup{(C)}} holds.
\end{itemize}
\end{itemize}

It is clear that part~(i) of \hyperlink{A1}{(A1)}  implies \hyperlink{C}{(C)}.
Therefore, as shown in \cref{T2.1}, under \hyperlink{A1}{(A1)},
there exists an optimal ergodic occupation measure.

\begin{remark}
Hypothesis \hyperlink{A1}{(A1)}\,(i) cannot be satisfied unless
$\KK$ is $\sigma$-compact.
A non-trivial example of such a Polish space
is $\prod_{i\in\NN}
\{\lambda e_i \,\colon \lambda \geq 0\}$ where $\{e_i\}_{i\in\NN}$
is a complete orthonormal
basis for a Hilbert space with relative topology inherited from the ambient Hilbert space.
This space is not locally compact.
Note also that an inf-compact $\rc$ is automatically $\cK$-inf-compact
\cite[Lemma~2.1\,(ii)]{Feinberg-13}.
\end{remark}

The second assumption is analogous to \cref{E-dscrE} for denumerable MDPs.
We start with the following definition.

\begin{definition}
Let $\uuptau$ be as defined in \cref{E-uuptau}.
We say that $v\in\Usm$ is $\rc$-\emph{stable} if for the chain controlled by $v$
the map
\begin{equation*}
x\,\mapsto\,\Breve\Exp^{v}_{(x,0)}\Biggl[\sum_{k=0}^{\uuptau - 1}
\Breve\rc_{v}(\Breve{X}_k)\Biggr]
\end{equation*}
is locally bounded on $\XX$, and by that we mean that it is bounded
on every bounded set of $\XX$.
\end{definition}

We impose the following assumption.

\begin{itemize}
\item[\hypertarget{A2}{\textbf{(A2)}}]
There exists a $\rc$-\emph{stable} $v\in\Usm$.
\end{itemize}

 Assumptions \hyperlink{A0}{(A0)}--\hyperlink{A2}{(A2)} are in effect throughout
the rest of the paper, unless mentioned otherwise.
To see how they are used, consider the following.
Let $\Hat{v}\in\Usm$ be 
such that it
agrees $\uppi_{\zeta^\star}$-a.e.\ with the control $v_{\zeta^\star}$ obtained
via the disintegration of an optimal ergodic occupation measure
$\zeta^\star=\uppi_{\zeta^\star}\circledast v_{\zeta^\star}$,
whose existence is guaranteed by \hyperlink{A1}{(A1)}.
 It is then clear by \hyperlink{A0}{(A0)} and Proposition 5.1.1, p.\ 97, \cite{Meyn-09} that the chain controlled by $\Hat{v}$ is
$\nu$-irreducible and aperiodic.
Thus, the invariant probability measure $\uppi_{\zeta^\star}$ is unique
for the chain controlled by $\Hat{v}$ and is (trivially)  mutually absolutely continuous with respect to $\nu$ on its support.
This implies that $J(x,\Hat{v})=\beta$, a constant that does not depend on $x\in\XX$.
Compare this with the counterexample in 
\cite[Example~1, p.~178]{Dyn-Yush}.
Also, \hyperlink{A2}{(A2)} should be compared with part (b) of
\cite[Theorem~3.5]{Yu-20}.
It is clear that \hyperlink{A0}{(A0)}
implies that the split chain controlled by a $c$-stable $v\in\Usm$ is positive Harris.

In the rest of the paper
we let
\begin{equation}\label{E-zstar}
\zeta^\star\,=\,\uppi_{\zeta^\star}\circledast v_{\zeta^\star}\in\eom^\star
\end{equation}
be a generic optimal ergodic measure.
It is clear that
$\Breve\Exp^{v_{\zeta^\star}}_{(x,0)}\Bigl[\sum_{k=0}^{\uuptau - 1}
\Breve\rc_{v_{\zeta^\star}}({X}_k)\Bigr]$ is well defined
$\uppi_{\zeta^\star}$-a.e.

We continue with the following lemma.

\begin{lemma}\label{L3.1}
Any $c$-stable $v\in\Usm$ (and therefore every  $v\in\Usm$) satisfies
\begin{equation*}
\Breve\Exp_{(x,0)}^{v}\Biggl[\sum_{k=0}^{\uuptau - 1}
\bigl(\Breve\rc_v\bigl(\Breve{X}_k\bigr)-\beta\bigr)\Biggr]
\,\ge\, \Breve\Exp_{(x,0)}^{v_{\zeta^\star}}\Biggl[\sum_{k=0}^{\uuptau - 1}
\bigl(\Breve\rc_{v_{\zeta^\star}}\bigl(\Breve{X}_k\bigr)-\beta\bigr)\Biggr]
\qquad\uppi_{\zeta^\star}\text{-a.e.}
\end{equation*}
In particular, \hyperlink{A2}{(A2)} implies that
$x\mapsto \Breve\Exp^{v_{\zeta^\star}}_{(x,0)}\Bigl[\sum_{k=0}^{\uuptau - 1}
\Breve\rc_{v_{\zeta^\star}}({X}_k)\Bigr]$ is locally bounded
$\uppi_{\zeta^\star}$-a.e.
\end{lemma}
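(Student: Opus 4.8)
Here is how I would argue, exploiting the regeneration structure furnished by the pseudo‑atom $\Breve\cB$ together with the optimality of $\beta$ over $\eom$. From any $(x,1)\in\Breve\cB$ the law of the split chain from time one on is independent of the representative (it transitions to $\Breve\nu$), so $\Breve\cB$ behaves as an atom, and I write $\Breve\Exp^{w}_{\Breve\cB}$ accordingly. Fix $\Hat v\in\Usm$ agreeing with $v_{\zeta^\star}$ $\uppi_{\zeta^\star}$‑a.e.\ (cf.\ the remark after \cref{E-disint}); then $\uppi_{\zeta^\star}$ is $P_{\Hat v}$‑invariant, so the split chain under $\Hat v$ started from $\uppi_{\zeta^\star}$‑a.e.\ $(x,0)$ stays in (the support of $\uppi_{\zeta^\star}$) $\times\{0\}$ together with $\Breve\cB$ a.s., and the quantities below are insensitive to the extension. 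For $w\in\Usm$ set $U_w((x,0))\df\Breve\Exp^{w}_{(x,0)}\bigl[\sum_{k=0}^{\uuptau-1}(\Breve\rc_w(\Breve X_k)-\beta)\bigr]$, extended by $0$ on $\Breve\cB$. Since $\Breve\rc_w\ge c_0\df\inf_{(\cB^c\times\Act)\cap\KK}\rc>\beta$ on $\cB^c\times\{0\}$ by \hyperlink{A0}{(A0)}, $\Breve\rc_w\ge0$ on $\cB\times\{0\}$, and \cref{L2.1} bounds the expected number of visits to $\cB\times\{0\}$ before $\uuptau$ by $\delta_\circ$, we get $U_w((x,0))\ge-\beta\delta_\circ$ for every $w\in\Usm$ and every $x$; in particular $U_w$ is unambiguously defined, and the assertion is $U_v\ge U_{\Hat v}$ $\uppi_{\zeta^\star}$‑a.e.\ for every $\rc$‑stable $v$ — equivalently, since trivially $U_{\Hat v}\ge W\df\inf_{v\in\Usm}U_v$, that $U_{\Hat v}=W$ $\uppi_{\zeta^\star}$‑a.e.

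The first ingredient is a renewal identity. The split chain under $\Hat v$ is $\nu$‑irreducible by \hyperlink{A0}{(A0)} and has invariant probability $\Breve\uppi_{\zeta^\star}$ (the splitting of the $P_{\Hat v}$‑invariant $\uppi_{\zeta^\star}$, cf.\ \cref{T2.1}), hence is positive Harris, and $\Breve\uppi_{\zeta^\star}(\Breve\cB)=\delta\,\uppi_{\zeta^\star}(\cB)>0$ (indeed $\uppi_{\zeta^\star}(\cB)>0$, else $\int\rc\,\D\zeta^\star>\beta$ by \hyperlink{A0}{(A0)}). By Kac's formula $\Breve\Exp^{\Hat v}_{\Breve\cB}[\uuptau]=\Breve\uppi_{\zeta^\star}(\Breve\cB)^{-1}<\infty$, and since $\int\Breve\rc_{\Hat v}\,\D\Breve\uppi_{\zeta^\star}=\int\rc\,\D\zeta^\star=\beta$ (the cost equivalence of \cref{S4.4}) we get $\Breve\Exp^{\Hat v}_{\Breve\cB}\bigl[\sum_{k=0}^{\uuptau-1}\Breve\rc_{\Hat v}(\Breve X_k)\bigr]=\beta\,\Breve\Exp^{\Hat v}_{\Breve\cB}[\uuptau]$; conditioning on the first step out of $\Breve\cB$ (which lands in $\Breve\nu$, carried by $(\XX\times\{0\})\cup\Breve\cB$ with $\XX\times\{0\}$‑part equal to $(1-\delta)\nu$ and $\Breve\cB$‑part contributing nothing since $\Breve\rc_{\Hat v}\equiv0$ on $\Breve\cB$, see \cref{ED2.5A}), this reads $0=-\beta+(1-\delta)\int_\XX U_{\Hat v}((y,0))\,\nu(\D y)$, i.e.\ $\int_\XX U_{\Hat v}((y,0))\,\nu(\D y)=\beta/(1-\delta)$. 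The same computation for any $\rc$‑stable $w$ — whose split chain is positive Harris (as noted after \hyperlink{A2}{(A2)}) with $\zeta_w\df\uppi_w\circledast w\in\eom$ and $\Breve\Exp^w_{\Breve\cB}[\uuptau]<\infty$ — gives $-\beta+(1-\delta)\int_\XX U_w((y,0))\,\nu(\D y)=(\int\rc\,\D\zeta_w-\beta)\,\Breve\Exp^w_{\Breve\cB}[\uuptau]\ge0$ by \cref{ED2.2A}, hence $\int_\XX U_w((y,0))\,\nu(\D y)\ge\beta/(1-\delta)$.

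Next I would analyse the auxiliary first‑passage problem with value $W$. A one‑step analysis gives the dynamic programming equation $W((x,0))=\min_{u\in\cU(x)}\bigl[\Breve\rc((x,0),u)-\beta+Q_uW((x,0))\bigr]$ ($\le$ is clear, and $\ge$ follows from the Poisson identity $U_v((x,0))=\Breve\rc_v((x,0))-\beta+Q_vU_v((x,0))$ and $U_v\ge W$). Moreover $W$ is locally bounded: $W\ge-\beta\delta_\circ$ as above, and $W\le U_{v_0}$ with $v_0$ the $\rc$‑stable strategy of \hyperlink{A2}{(A2)}, $U_{v_0}((x,0))\le\Breve\Exp^{v_0}_{(x,0)}\bigl[\sum_{k=0}^{\uuptau-1}\Breve\rc_{v_0}(\Breve X_k)\bigr]$ being locally bounded by $\rc$‑stability. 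Now \hyperlink{A1}{(A1)} ($\cK$‑inf‑compactness of $\rc$), the weak continuity of $Q$ and the lower semicontinuity of $\Breve\rc$ let me invoke a measurable‑selection theorem to produce, for each $\varepsilon>0$, a stationary $\varepsilon$‑optimal selector $v_\varepsilon\in\Usm$ with $U_{v_\varepsilon}\le W+\varepsilon$ on $\XX\times\{0\}$; each $v_\varepsilon$ is $\rc$‑stable, since $U_{v_\varepsilon}$ is locally bounded and then $\Breve\Exp^{v_\varepsilon}_{(x,0)}\bigl[\sum_{k=0}^{\uuptau-1}\Breve\rc_{v_\varepsilon}(\Breve X_k)\bigr]=U_{v_\varepsilon}((x,0))+\beta\,\Breve\Exp^{v_\varepsilon}_{(x,0)}[\uuptau]$ is locally bounded too (using $\Breve\Exp^{v_\varepsilon}_{(x,0)}[\uuptau]\le c_0^{-1}\Breve\Exp^{v_\varepsilon}_{(x,0)}\bigl[\sum_{k=0}^{\uuptau-1}\Breve\rc_{v_\varepsilon}(\Breve X_k)\bigr]+\delta_\circ$ and $c_0>\beta$). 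Applying the renewal inequality to $v_\varepsilon$ and letting $\varepsilon\downarrow0$ gives $\int_\XX W((y,0))\,\nu(\D y)\ge\beta/(1-\delta)=\int_\XX U_{\Hat v}((y,0))\,\nu(\D y)$; since $0\le U_{\Hat v}-W$ this forces $U_{\Hat v}((x,0))=W((x,0))$ for $\nu$‑a.e.\ $x$.

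The remaining — and, I expect, hardest — step is to upgrade this from $\nu$‑a.e.\ to $\uppi_{\zeta^\star}$‑a.e., i.e.\ to bridge the gap between the regeneration measure $\nu$ governing the renewal identity and the invariant measure $\uppi_{\zeta^\star}$ in the conclusion. Put $g\df(U_{\Hat v}-W)\,\Ind_{\XX\times\{0\}}\ge0$. Subtracting the Poisson identity for $U_{\Hat v}$ under $\Hat v$ (valid $\uppi_{\zeta^\star}$‑a.e., where $\Hat v=v_{\zeta^\star}$) from the dynamic programming inequality $W((x,0))\le\Breve\rc_{\Hat v}((x,0))-\beta+Q_{\Hat v}W((x,0))$ yields $g\ge Q_{\Hat v}g$ $\uppi_{\zeta^\star}$‑a.e.\ on $\XX\times\{0\}$; on $\Breve\cB$ one has $g=0=Q_{\Hat v}g$ since $\int g\,\D\Breve\nu=(1-\delta)\int_\XX g((y,0))\,\nu(\D y)=0$. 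For fixed $M>0$ the bounded function $g\wedge M$ also satisfies $g\wedge M\ge Q_{\Hat v}(g\wedge M)$ $\Breve\uppi_{\zeta^\star}$‑a.e.\ (by concavity of $t\mapsto t\wedge M$), so integrating against the $Q_{\Hat v}$‑invariant $\Breve\uppi_{\zeta^\star}$ forces $g\wedge M=Q_{\Hat v}(g\wedge M)$ $\Breve\uppi_{\zeta^\star}$‑a.e.; as the split chain under $\Hat v$ is positive Harris and aperiodic ($Q(\Breve\cB\,|\,(x,1),u)=\delta>0$), a bounded harmonic function is $\Breve\uppi_{\zeta^\star}$‑a.e.\ constant, and that constant is $0$ because $g\wedge M$ vanishes on $\Breve\cB$, a set of positive $\Breve\uppi_{\zeta^\star}$‑measure. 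Letting $M\uparrow\infty$ gives $g=0$ $\Breve\uppi_{\zeta^\star}$‑a.e., hence $U_{\Hat v}=W$ $\uppi_{\zeta^\star}$‑a.e., which with $U_v\ge W$ for all $v$ is the claimed inequality. For the final assertion, taking $v=v_0$ shows $U_{\Hat v}\le U_{v_0}$ is locally bounded, and then $\Breve\Exp^{v_{\zeta^\star}}_{(x,0)}\bigl[\sum_{k=0}^{\uuptau-1}\Breve\rc_{v_{\zeta^\star}}(\Breve X_k)\bigr]=U_{\Hat v}((x,0))+\beta\,\Breve\Exp^{v_{\zeta^\star}}_{(x,0)}[\uuptau]$, bounded via $\Breve\Exp^{v_{\zeta^\star}}_{(x,0)}[\uuptau]\le c_0^{-1}\Breve\Exp^{v_{\zeta^\star}}_{(x,0)}\bigl[\sum_{k=0}^{\uuptau-1}\Breve\rc_{v_{\zeta^\star}}(\Breve X_k)\bigr]+\delta_\circ$ and $c_0>\beta$, is locally bounded $\uppi_{\zeta^\star}$‑a.e. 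Apart from this upgrade, the points needing care are the measurable‑selection/verification technicalities of the auxiliary first‑passage problem and the integrability of $g$, which is why I handle the latter through truncation rather than directly.
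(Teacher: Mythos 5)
Your argument takes a genuinely different route from the paper's. The paper proceeds by contradiction: if the inequality failed on a set $A$ with $\uppi_{\zeta^\star}(A)>0$, it builds a nonstationary strategy $\Tilde{v}$ that switches to $v$ upon entering $A$ during a cycle and reverts to $v_{\zeta^\star}$ at the next visit to $\Breve\cB$; the strong Markov property makes the expected cycle cost from the pseudo-atom strictly negative, and a truncation argument (the costs $\rc^m$ and the convergence $\beta_m\to\beta$) converts this into an average cost strictly below $\beta$, contradicting \cref{T2.1}. Your renewal identity at the atom is also the paper's starting point (it is \cref{E3.2}, derived from \cref{E-inv}), and your renewal inequality $\int_\XX U_w\,\D\nu\ge\beta/(1-\delta)$ for general $\rc$-stable $w$ is a clean observation. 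But two steps are genuine gaps, not technicalities.

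First, the middle step --- the dynamic programming equation for $W\df\inf_{v\in\Usm}U_v$ together with pointwise $\varepsilon$-optimal \emph{stationary} selectors satisfying $U_{v_\varepsilon}\le W+\varepsilon$ --- is not a routine application of a measurable-selection theorem. The ``$\le$'' half of the DP equation for an infimum taken over stationary strategies requires pasting an action at $x$ with near-optimal stationary continuations at \emph{every} successor state into a single stationary strategy, which presupposes exactly the uniform measurable selector you are trying to produce; moreover $W$, being an infimum over the uncountable family $\Usm$, is not obviously Borel measurable, and passing from a selector of the minimizer of the DP equation to the pointwise bound $U_{v_\varepsilon}\le W+\varepsilon$ needs a verification argument controlling $\Breve\Exp^{v_\varepsilon}_{(x,0)}\bigl[W(\Breve X_{\uuptau\wedge n})\bigr]$ for a $v_\varepsilon$ not yet known to be stable. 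This machinery is essentially what \cref{T3.1} supplies, and the paper proves that theorem \emph{after}, and using, \cref{L3.1}; importing it here inverts the logical order.

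Second, several steps presuppose positive Harris recurrence of the split chain under $\Hat v$, and the stated justification (``$\nu$-irreducible by (A0)'') does not hold: from $\cB^c\times\{0\}$ the set $\cB$ need not be reachable under an arbitrary $\Hat v$, and recurrence of $\Breve\cB$ under $\Hat v$ is close to what the ``in particular'' clause of the lemma is meant to deliver. This affects both Kac's formula and the claim that bounded $Q_{\Hat v}$-harmonic functions are $\Breve\uppi_{\zeta^\star}$-a.e.\ constant in your final upgrade. Both can be repaired without circularity --- $\Breve\Prob_{\Breve\cB}(\uuptau<\infty)=1$ follows from Poincar\'e recurrence since $\Breve\uppi_{\zeta^\star}(\Breve\cB)=\delta\uppi_{\zeta^\star}(\cB)>0$, so the atom-cycle measure is invariant and, after decomposing $\Breve\uppi_{\zeta^\star}$ into ergodic components, any component not charging $\Breve\cB$ would project to an ergodic occupation measure of cost at least $\inf_{(\cB^c\times\Act)\cap\KK}\rc>\beta$ and hence carry zero weight, which also rescues the harmonic-function step --- but these repairs are needed and are not in your write-up.
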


\begin{proof}
If not, then we have the reverse inequality  on some
set $A\in\Borel(\XX)$ with $\uppi_{\zeta^\star}(A)>0$, that is,
\begin{equation}\label{PL3.1A}
\Breve\Exp_{(x,0)}^{v}\Biggl[\sum_{k=0}^{\uuptau - 1}
\bigl(\Breve\rc_v\bigl(\Breve{X}_k\bigr)-\beta\bigr)\Biggr]
\,<\, \Breve\Exp_{(x,0)}^{v_{\zeta^\star}}\Biggl[\sum_{k=0}^{\uuptau - 1}
\bigl(\Breve\rc_{v_{\zeta^\star}}\bigl(\Breve{X}_k\bigr)-\beta\bigr)\Biggr]
\qquad\forall\,x\in A,
\end{equation}
with $+\infty$ a possible value for the right hand side. To simplify the expressions let
\begin{equation*}
\cJ(v) \,\df\, \sum_{k=0}^{\uuptau - 1}
\bigl(\Breve\rc_v\bigl(\Breve{X}_k\bigr)-\beta\bigr)\,,\qquad v\in\Usm\,.
\end{equation*}
 Since $\uppi_{\zeta^\star}(A)>0$, $\nu(A)> 0$ and $A$ is in the support of the resolvent $R_{\zeta^*}(x,\cdot)$ for $\pi_{\zeta^*}$-a.e. $x$.  Hence \cref{PL3.1A} implies that
\begin{equation}\label{PL3.1B}
\Breve\Exp_{(x,1)}^{v_{\zeta^\star}}\Bigl[\Ind_{\{\uuptau> \uptau_A\}}
\Breve\Exp_{\Breve{X}_{\uptau_A}}^{v_{\zeta^\star}}\bigl[\cJ(v_{\zeta^\star})\bigr]\Bigr]
\,>\,
\Breve\Exp_{(x,1)}^{v_{\zeta^\star}}\Bigl[\Ind_{\{\uuptau> \uptau_A\}}
\Breve\Exp_{\Breve{X}_{\uptau_A}}^{v}\bigl[\cJ(v)\bigr]\Bigr]\,.
\end{equation}
Consider
$\Tilde{v}=(\Tilde{v}_n\,,\;n\in\NN_0)$ defined by
Let 
\begin{equation*}
\Tilde{v}_n \,\df\,
\begin{cases}
v& \text{if\ \ } \uptau_A \le n < \uuptau\,,\\[2pt]
v_{\zeta^\star}&\text{otherwise.}
\end{cases}
\end{equation*}
It is clear that this can be extended to a (nonstationary) strategy
$\Tilde{v}\in\Uadm$ over the infinite horizon, by using  the $k^{\mathrm{th}}$ return time
to $\Breve\cB$, denoted as $\uuptau_k$, and the number of cycles $\varkappa(n)$ completed
at time $n\in\NN$, which is defined by
\begin{equation*}
\varkappa(n)\,\df\, \max\,\{k\colon n\ge \uuptau_k\}\,.
\end{equation*}
Using the strong Markov property and \cref{PL3.1B}, we obtain
\begin{align*}
0&\,=\, \Breve\Exp_{(x,1)}^{v_{\zeta^\star}}\bigl[\cJ(v_{\zeta^\star})\bigr]\\
&\,=\,
\Breve\Exp_{(x,1)}^{v_{\zeta^\star}}\Bigl[\Ind_{\{\uuptau\le \uptau_A\}}
\cJ(v_{\zeta^\star})\Bigr]
+ \Breve\Exp_{(x,1)}^{v_{\zeta^\star}}\Bigl[\Ind_{\{\uuptau> \uptau_A\}}
\Breve\Exp_{\Breve{X}_{\uptau_A}}^{v_{\zeta^\star}}\bigl[\cJ(v_{\zeta^\star})\bigr]\Bigr]\\
&\,>\,
\Breve\Exp_{(x,1)}^{\Tilde{v}}\Bigl[\Ind_{\{\uuptau\le \uptau_A\}}
\cJ(v_{\zeta^\star})\Bigr]
+ \Breve\Exp_{(x,1)}^{\Tilde{v}}\Bigl[\Ind_{\{\uuptau> \uptau_A\}}
\Breve\Exp_{\Breve{X}_{\uptau_A}}^{v}\bigl[\cJ(v)\bigr]\Bigr]\\
&\,=\,
\Breve\Exp_{(x,1)}^{\Tilde{v}}\Biggl[\sum_{k=0}^{\uuptau - 1}
\bigl(\Breve\rc_{\Tilde{v}}(\Breve{X}_k)-\beta\bigr)\Biggr]\,.
\end{align*}
For $m>0$,  let $\Breve\rc^m_{\Tilde{v}}\df \min\{m,\Breve\rc_{\Tilde{v}}\}$.
The preceding inequality shows that, for some $\varepsilon>0$, we have
\begin{equation}\label{PL3.1C}
\Breve\Exp_{(x,1)}^{\Tilde{v}}\Biggl[\sum_{k=0}^{\uuptau - 1}
\bigl(\Breve\rc^m_{\Tilde{v}}(\Breve{X}_k)-\beta\bigr)\Biggr]\,<\,
-\varepsilon\qquad\forall\, m\in\NN\,.
\end{equation}
We claim that \cref{PL3.1C} contradicts the fact that $\beta$ is the optimal ergodic
value.
Indeed, it is rather standard to show (see the proof
of Theorem~5.1 of \cite{Hasminskii}) that
\begin{equation*}
\frac{1}{T}\, \sum_{t=0}^{T-1}
\bigl(\Breve\rc^m_{\Tilde{v}}(\Breve{X}_t)-\beta\bigr)
\,\xrightarrow[T\to\infty]{}\,
\frac{1}{\Breve\Exp_{(x,1)}^{\Tilde{v}}[\uuptau]}\,
\Breve\Exp_{(x,1)}^{\Tilde{v}}\Biggl[\sum_{k=0}^{\uuptau - 1}
\bigl(\Breve\rc^m_{\Tilde{v}}(\Breve{X}_k)-\beta\bigr)\Biggr]
\qquad \Breve\Prob^{\Tilde{v}}\text{-a.s.},
\end{equation*}
which together with \cref{PL3.1C} implies (since $\Breve\rc^m_{\Tilde{v}}$ is bounded)
that
\begin{equation}\label{PL3.1D}
\lim_{T\to\infty}\,\frac{1}{T}\, \Breve\Exp_{(x,1)}^{\Tilde{v}}\Biggl[\sum_{t=0}^{T-1}
\bigl(\Breve\rc^m_{\Tilde{v}}(\Breve{X}_t)-\beta\bigr)\Biggr]
\,<\,-\varepsilon_1\qquad\forall\, m\in\NN\,,
\end{equation}
for some $\varepsilon_1>0$.
Let 
$c^m\df \min\{m,\rc\}$, and 
$\beta_m$ denote the optimal ergodic
value for $c^m$ in place of $\rc$, defined as in assumption \hyperlink{C}{(C)}.
We first show that $\beta_m\to\beta$ as $m\to\infty$.
By \cref{T2.1}, there exists $\zeta_m\in\eom$ such that
\begin{equation*}
\beta_m \,=\, \int_{\KK} \rc^m\,\D\zeta_m\qquad \forall\, m> \beta + 2\Tilde\varepsilon\,.
\end{equation*}
As argued in the proof of \cref{T2.1}, $\zeta_m$ converges along some subsequence
to a measure
$a\zeta'+(1-a)\zeta''\in\Pm(\KK^*)$, with $\zeta'\in\Pm(\partial\KK^*)$
and $\zeta''\in\Pm(\KK)$.
We employ a family $\{c^m_n\,,\, m,n\in\NN\}$
of lower semi-continuous functions on $\KK^*$ defined
as in \hyperlink{C}{(C)} with $\rc$ replaced by $\rc^m$.
Then, analogously to \cref{PT2.1G},
for any fixed $m>\beta + 2\Tilde\varepsilon$, we have
\begin{equation*}
\lim_{k\to\infty}\,\beta_k\,\ge\,
\lim_{k\to\infty}\,\int_{\KK} \rc^m_m\,\D\zeta_k
\,\ge\,
a(\beta+\Tilde\varepsilon)+(1-a) \int_{\KK} \rc^m_m\,\D\zeta''\,.
\end{equation*}
Taking limits as $m\to\infty$ and using monotone convergence,
we obtain
\begin{equation}\label{PL3.1E}
\beta\,\ge\,\lim_{k\to\infty}\,\beta_k\,\ge\,
a(\beta+\Tilde\varepsilon)+(1-a) \int_{\KK} \rc\,\D\zeta''\,.
\end{equation}
This implies that $a<1$, and therefore
$\zeta''\in\eom$ by \cref{L2.1}.
But then $\int_{\KK} c\,\D\zeta''\ge\beta$ by \cref{ED2.2A},
and the equality $\lim_{k\to\infty}\,\beta_k=\beta$ follows
from \cref{PL3.1E}.
Parenthetically, we mention that the above argument
also shows that the sequence $\{\zeta_m\}_{m\in\NN}$ is tight.
Continuing, \cref{PL3.1D} implies that
\begin{equation*}
\beta_m - \beta \,\le\,\lim_{T\to\infty}\,\frac{1}{T}\,
\Breve\Exp_{(x,1)}^{\Tilde{v}}\Biggl[\sum_{t=0}^{T-1}
\bigl(\Breve\rc^m_{\Tilde{v}}(\Breve{X}_t)-\beta\bigr)\Biggr]
\,=\,-\varepsilon_1\qquad\forall\, m\in\NN\,,
\end{equation*}
which is a contradiction. Note that for a stationary policy that is not $c$-stable, the claim is vacuously true because the left hand side of the inequality is $+\infty$.
This completes the proof.
\end{proof}

Let $v\in\Usm$ be $c$-stable.
It follows from \cref{L3.1} that the strategy
 which agrees with $v_{\zeta^\star}$ on the support
of $\uppi_{\zeta^\star}$ and with $v$ on its complement is also $c$-stable.


It is clear from the definition of $Q$ that the first exit distribution of
the split-chain from $\cB\times\{1\}$ does not depend on $x\in\cB$.
Thus $x\mapsto \Breve\Exp_{(x,1)}^{\Hat{v}}[\uuptau\mspace{2mu}]$ is constant on $\cB$.
This implies that, for all $f\in\Cc_b(\XX)$, with $\Breve{f}$ defined
analogously to \cref{ED2.7A} so that $\Breve{f}\bigl((x,1)\bigr)=0$ for all $x\in\cB$, we have
\begin{equation}\label{E-inv}
\uppi_{\zeta^\star}(f)\,\df\, \int_{\XX} f(y)\,\uppi_{\zeta^\star}(\D{y})
\,=\,\frac{\Breve\Exp_{(x,1)}^{\Hat{v}}\Bigl[\sum_{k=0}^{\uuptau - 1}
\Breve f(\Breve{X}_k)\Bigr]}
{\Breve\Exp_{(x,1)}^{\Hat{v}}[\uuptau\mspace{2mu}]}\qquad\forall\, x\in\cB\,.
\end{equation}
In fact, \cref{E-inv} holds for any $f\in \cL^1(\XX;\uppi_{\Hat{v}})$
by \cite[Proposition~5.9]{Nummelin-book}.
Therefore, we have
\begin{equation}\label{E3.2}
\Breve\Exp^{\Hat{v}}_{(x,1)}\Biggl[\sum_{k=0}^{\uuptau - 1}
\bigl(\Breve\rc_{\Hat{v}}(\breve{X}_k)-\beta\bigr)\Biggr]\,=\,0
\qquad\forall\, x\in\cB\,.
\end{equation}
It then follows by \cref{L3.1}
that the function
\begin{equation}\label{E3.3}
\Breve\cG_{\Hat{v}}^{(i)} (x)\,=\,\Breve\cG_{\Hat{v}} (x,i) \,\df\,
\Breve\Exp^{\Hat{v}}_{(x,i)}\Biggl[\sum_{k=0}^{\uuptau - 1}
\bigl(\Breve\rc_{\Hat{v}}(\breve{X}_k)-\beta\bigr)\Biggr]\,,\qquad(x,i)\in\cX\,,
\end{equation}
is locally bounded from above.
On the other hand, by \cref{L2.2} and the fact that
$\Breve\rc_{\Hat{v}}\ge\beta$ on $\cB^c\times\{0\}$
we have
$\inf_\XX\,\Breve\cG_{\Hat{v}}^{(0)}\ge - \beta \delta_\circ$.

In \cref{S3.2} we show that $\Breve\cG_{\Hat{v}}^{(i)} (x)$ solves
the Poisson equation.

\subsection{Solution to the Poisson equation}\label{S3.2}

Let ${\Hat{v}}\in\Usm$ be 
$\rc$-\emph{stable}, and 
such that it
agrees $\uppi_{\zeta^\star}$-a.e.\ with the control $v_{\zeta^\star}$ obtained
via the disintegration of an optimal ergodic occupation measure
$\zeta^\star=\uppi_{\zeta^\star}\circledast v_{\zeta^\star}$.
By one step analysis, using \cref{E-Q1,E-Q2,E-Q3,ED2.7A,E3.3}, 
adopting the notation in \cref{D2.1}, we obtain
\begin{equation}\label{E-stp1a}
\Breve\cG_{\Hat{v}}^{(1)}(x) \,=\, -\beta +
 (1 -\delta)\int_\cB\Breve\cG_{\Hat{v}}^{(0)}(y)\nu(\D{y})
 + \delta\int_\cB\Breve\cG_{\Hat{v}}^{(1)}(y)\nu(\D{y})\,,\quad x\in\cB\,,
\end{equation}
\begin{equation}\label{E-stp1b}
\begin{aligned}
\Breve\cG_{\Hat{v}}^{(0)}(x) &\,=\, \tfrac{\rc_{\Hat{v}}(x)}{1-\delta}  -\beta
+ \int_\cB\Breve\cG_{\Hat{v}}^{(0)}(y)\bigl[P_{\Hat{v}}(\D{y}\,|\,x)-\delta\nu(\D{y})\bigr]
+\tfrac{1}{1-\delta}\int_{\cB^c}\Breve\cG_{\Hat{v}}^{(0)}(y)P_{\Hat{v}}(\D{y}\,|\,x)\\
&\mspace{200mu} +
\tfrac{\delta}{1-\delta}\int_{\cB}\Breve\cG_{\Hat{v}}^{(1)}(y)
\bigl[P_{\Hat{v}}(\D{y}\,|\,x)-\delta\nu(\D{y})\bigr]\,,
\quad x\in\cB\,,
\end{aligned}
\end{equation}
and
\begin{equation}\label{E-stp1c}
\begin{aligned}
\Breve\cG_{\Hat{v}}^{(0)}(x) &\,=\, \rc_{\Hat{v}}(x)  -\beta
+ (1-\delta)\int_\cB\Breve\cG_{\Hat{v}}^{(0)}(y)P_{\Hat{v}}(\D{y}\,|\,x)
+\int_{\cB^c}\Breve\cG_{\Hat{v}}^{(0)}(y)P_{\Hat{v}}(\D{y}\,|\,x)\\
&\mspace{200mu} + \delta \int_\cB\Breve\cG_{\Hat{v}}^{(1)}(y)P_{\Hat{v}}(\D{y}\,|\,x)
\,,\quad x\in\cB^c\,.
\end{aligned}
\end{equation}
Let
\begin{equation}\label{E-overc}
\overline\rc(x,u) \,\df\, \rc(x,u) - \beta\,,\qquad\text{and}\quad
\overline\rc_{\Hat{v}}(x) \,\df\, \rc_{\Hat{v}}(x) - \beta\,.
\end{equation}
Multiplying \cref{E-stp1a} and \cref{E-stp1b} by $\delta$ and $(1-\delta)$,
respectively, and adding them together, we obtain
\begin{equation}\label{E-stp2}
\begin{aligned}
(1 -\delta)\Breve\cG_{\Hat{v}}^{(0)}(x)+\delta\,\Breve\cG_{\Hat{v}}^{(1)}(x)
&\,=\, \overline\rc_{\Hat{v}}(x)
+ \int_{\cB}\bigl[(1 -\delta)\Breve\cG_{\Hat{v}}^{(0)}(y)
+\delta\,\Breve\cG_{\Hat{v}}^{(1)}(y)\bigr]P_{\Hat{v}}(\D{y}\,|\,x)\\
&\mspace{150mu} + \int_{\cB^c}\Breve\cG_{\Hat{v}}^{(0)}(y)P_{\Hat{v}}(\D{y}\,|\,x)\,,
\quad x\in\cB\,.
\end{aligned}
\end{equation}

We define
\begin{equation}\label{E-cGB}
\cG_{\Hat{v}}(x)\,\df\,\begin{cases}
(1 -\delta)\Breve\cG_{\Hat{v}}^{(0)}(x)+\delta\,\Breve\cG_{\Hat{v}}^{(1)}(x)\,,
&\text{for\ }x\in\cB\,,\\[5pt]
\Breve\cG_{\Hat{v}}^{(0)}(x), &\text{otherwise}.
\end{cases}
\end{equation}

It follows by \cref{E-stp1c,E-stp2,E-cGB} that
\begin{equation}\label{E-stp3}
\cG_{\Hat{v}}(x)\,=\,\overline\rc_{\Hat{v}}(x) +\int \cG_{\Hat{v}}(y)\,P_{\Hat{v}}(\D{y}\,|\,x)
\,=\, \overline\rc_{\Hat{v}}(x) + P_{\Hat{v}} \cG_{\Hat{v}}(x)\,,\qquad x\in\XX\,.
\end{equation}

It is clear that \cref{E3.2} implies that
$\Breve\cG_{\Hat{v}}^{(1)}\equiv 0$ on $\cB$.
Thus
\begin{equation*}
\int_\cB\cG_{\Hat{v}}(y)\nu(\D{y}) \,=\,
\int_\cB (1-\delta)\Breve\cG_{\Hat{v}}^{(0)}(y)\nu(\D{y})\,=\,\beta
\end{equation*}
by \cref{E-stp1a,E-cGB}.

Note that
$\Breve\cG_{v_{\zeta^\star}}^{(i)}$ and
$\cG_{v_{\zeta^\star}}$ are well defined $\uppi_{\zeta^\star}$-a.e.\
via \cref{E3.3,E-cGB}.

\subsection{Derivation of the Bellman equation (ACOE)}\label{S3.3}

Starting in this section, and throughout the rest of the paper,
we enforce the following structural hypothesis on the controlled chain.
This assumption is implicit in all the results of the paper which follow,
unless otherwise mentioned.

\begin{assumption}\label{A3.1}
$P(\D{y}\,|\, x,u)$ is \emph{strongly continuous} (or strong Feller),
that is, the map $\KK\ni(x,u)\mapsto\int_\XX f(y)P(\D{y}\,|\, x,u)$
is continuous for every $f\in\cM_b(\XX)$.
\end{assumption}

\begin{remark}
\cref{A3.1} implies that
the family
$\{P(\,\cdot\,|\, x,u)\,\colon (x,u)\in K\}$ is tight
for any compact set $K\subset\KK$.
Indeed, for any sequence $(x_n,u_n)\in K$ converging to
some $(x,u)$ in this set, we have $P(\,\,\cdot\,|\, x_n,u_n)\Rightarrow P(\,\cdot\,|\, x,u)$.
Then the above set, being the continuous image of a compact set, is compact.
By Prokhorov's theorem, it is tight.
\end{remark}

\begin{remark}
If $\XX=\Rd$, a sufficient condition for \cref{A3.1} is that
$$P(\D{y}\,|\, x,u) \,=\, \varphi(y\,|\, x,u)\lambda(\D{y})$$
for a density function $\varphi$
with respect to $\lambda$, the Lebesgue measure on $\Rd$,
and that the map
\begin{equation*}
(y,x,u)\in\Rd\times\KK\mapsto\varphi(y\,|\,x,u)\in [0,\infty)
\end{equation*}
is continuous.
This implies the continuity of the measure-valued map $(x,u) \mapsto \varphi(y|x,u)dy$ in total variation norm by Scheffe's theorem, which in turn implies \cref{A3.1}.
\end{remark}

\begin{definition}\label{D3.2}
Define 
\begin{equation*}
\Breve{V}_\star^{(0)}(x) \,\df\, \inf_{v\in\Usm}\,
\Breve\Exp^{v}_{(x,0)}\Biggl[\sum_{k=0}^{\uuptau - 1}
\bigl(\Breve\rc_{v}(\Breve{X}_k)-\beta\bigr)\Biggr]\,,\qquad x\in\XX\,,
\end{equation*}
Also let $\Breve{V}_\star^{(1)}(x)=0$ for $x\in\cB$,
and
\begin{equation*}
V_\star(x)\,\df\,\begin{cases}
(1 -\delta)\Breve{V}_\star^{(0)}(x)\,,
&\text{for\ }x\in\cB\,,\\[5pt]
\Breve{V}_\star^{(0)}(x), &\text{otherwise}.
\end{cases}
\end{equation*}
\end{definition}

Recall the definition of
$\order(f)$ in \cref{S-not},
and that $\fL(\XX)$ denotes the class of real-valued lower semi-continuous functions
which are bounded from below in $\XX$.

\begin{theorem}\label{T3.1}
The function $V_\star$ in \cref{D3.2} is in the class
$\fL(\XX)$ and satisfies
\begin{equation}\label{ET3.1A}
V_\star(x)\,=\,\min_{u\in\cU(x)}\;\biggl[\overline\rc(x,u)
+\int_\XX V_\star(y)\,P(\D{y}\,|\,x,u)\biggr]\qquad \forall\, x\in\XX\,,
\end{equation}
with $\overline\rc$ as in \cref{E-overc}.
Moreover, every $v_\star\in\Usm$ which satisfies
\begin{equation}\label{E-veri}
v_\star(x)\;\in\;\Argmin_{u\in\cU(x)}\;\bigl[\overline\rc(x,u)
+P_u V_\star(x)\bigr]
\end{equation}
is an optimal stationary Markov strategy.
In addition, \cref{ET3.1A} has, up to an additive constant, a unique solution
in $\fL(\XX)\cap\order(V_\star)$.
\end{theorem}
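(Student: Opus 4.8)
The plan is to carry out ``first‑passage value iteration'' on the split chain of \cref{D2.4} with target $\Breve\cB$, and then to \emph{un‑split} exactly as was done for the Poisson equation in \cref{S3.2}. First I would establish that $\Breve V_\star^{(0)}$, together with the convention $\Breve V_\star^{(1)}\equiv 0$ on $\cB$, satisfies the one‑step optimality equation of this first‑passage problem: the supersolution inequality by conditioning an arbitrary $v\in\Usmstab$ on its first move (for stationary controls the shifted continuation is again in $\Usmstab$), and the subsolution inequality by the policy ``play $u$ once, then an $\varepsilon$‑optimal $c$‑stable continuation'', which is legitimate precisely because $\Breve\cB$ is a genuine atom and so the continuation regenerates. \cref{L2.1} supplies the uniform lower bound $\Breve V_\star^{(0)}\ge-\beta\delta_\circ$ (whence $V_\star$ is bounded below) that makes all the limit exchanges valid. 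Spelling out the cases $x\in\cB$ and $x\in\cB^c$ of this equation via \cref{E-Q1,E-Q2,E-Q3,ED2.5A} yields exactly the ``$\min_u$'' analogues of \cref{E-stp1b} and \cref{E-stp1c} with each $\Breve\cG_{\Hat v}^{(i)}$ replaced by $\Breve V_\star^{(i)}$.

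Next I would un‑split. Substituting \cref{D3.2} into these two equations --- the same bookkeeping that turned \cref{E-stp1a,E-stp1b,E-stp1c} into \cref{E-stp2}, valid because the inner control $u$ is shared so the minimization passes through the affine combination --- collapses the $\cB^c$‑equation directly to $V_\star(x)=\min_{u\in\cU(x)}\bigl[\overline\rc(x,u)+P_uV_\star(x)\bigr]$, and collapses the $\cB$‑equation to the same identity up to the additive constant $\beta-\nu(V_\star)$. Hence \cref{ET3.1A} follows once $\nu(V_\star)=\beta$. Here $\nu(V_\star)\le\beta$ is immediate from $\Breve V_\star^{(0)}\le\Breve\cG_{\Hat v}^{(0)}$ ($\Hat v$ being $c$‑stable), \cref{E-cGB}, and the relation $\nu(\cG_{\Hat v})=\beta$ already proved above; for the reverse, \cref{L3.1} gives $\Breve V_\star^{(0)}=\Breve\cG_{\Hat v}^{(0)}$ $\uppi_{\zeta^\star}$‑a.e., and since the $\Hat v$‑chain is positive Harris its invariant measure $\uppi_{\zeta^\star}$ is a maximal irreducibility measure, so $\nu\ll\uppi_{\zeta^\star}$ and therefore $\nu(V_\star)=(1-\delta)\nu(\Breve V_\star^{(0)})=(1-\delta)\nu(\Breve\cG_{\Hat v}^{(0)})=\nu(\cG_{\Hat v})=\beta$.

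For $V_\star\in\fL(\XX)$: the function is real‑valued ($V_\star\le\Breve\cG_{\Hat v}^{(0)}$, which is finite and locally bounded by $c$‑stability of $\Hat v$) and bounded below, so only lower semicontinuity remains, which I would bootstrap off \cref{ET3.1A}. Given $y_n\to x_0$ realizing $\liminf V_\star$, boundedness of $V_\star(y_n)$ together with $P_uV_\star\ge\inf V_\star$ forces the attained minimizers $u_n$ into a level set of $\rc$, hence by the $\cK$‑inf‑compactness in \hyperlink{A1}{(A1)} into a compact set; passing to a convergent subsequence $u_n\to u^\star\in\cU(x_0)$ and using lower semicontinuity of $\rc$ and of $(y,u)\mapsto\int_\XX V_\star(z)\,P(\D z\,|\,y,u)$ (the latter from \cref{A3.1}, the integrand being the supremum of its bounded truncations) gives $\liminf V_\star(y_n)\ge\overline\rc(x_0,u^\star)+P_{u^\star}V_\star(x_0)\ge V_\star(x_0)$. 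The same facts --- $(x,u)\mapsto\overline\rc(x,u)+P_uV_\star(x)$ lower semicontinuous and $\cK$‑inf‑compact on $\KK$ --- give, via the measurable‑selection theorem of \cite{Feinberg-13}, that the infimum in \cref{ET3.1A} is attained and that an $\Argmin$‑selector $v_\star\in\Usm$ exists.

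Finally, optimality: iterating \cref{ET3.1A} under $v_\star$ yields $\tfrac1N\Exp^{v_\star}_x\bigl[\sum_{n=0}^{N-1}\rc_{v_\star}(X_n)\bigr]=\beta+\tfrac1N\bigl(V_\star(x)-\Exp^{v_\star}_x[V_\star(X_N)]\bigr)$, so $\cE(\mu,v_\star)\le\beta$ because $V_\star\ge-\beta\delta_\circ$, while $\cE(\mu,\xi)\ge J_\star=\beta$ for every $\xi$ by \cref{T2.1}; hence $v_\star$ is optimal. For uniqueness up to a constant, given another solution $W\in\fL(\XX)\cap\order(V_\star)$ I would replace it by $W-\nu(W)+\beta$ so that $\nu(W)=\beta$ (note $\nu(|W|)<\infty$ since $W\in\order(V_\star)$ and $\nu(|V_\star|)<\infty$). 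The identity $P_wW=W-(\rc_w-\beta)$ for an $\Argmin$‑selector $w$, with $\rc_w-\beta$ bounded away from $0$ on $\cB^c$ and $W$ bounded below and locally bounded above, shows by a Foster--Lyapunov argument that $w\in\Usmstab$; re‑splitting $W$ as in \cref{D3.2}, the normalization forces the $\{1\}$‑component to vanish on $\cB$, and iterating the split equation to $\uuptau$ gives $\Breve W^{(0)}=\Breve\cG_w^{(0)}\ge\Breve V_\star^{(0)}$, i.e.\ $W\ge V_\star$; feeding the (also $c$‑stable) selector $v_\star$ into $W\le\overline\rc_{v_\star}+P_{v_\star}W$ and iterating to $\uuptau$ gives $\Breve W^{(0)}\le\Breve\cG_{v_\star}^{(0)}=\Breve V_\star^{(0)}$, i.e.\ $W\le V_\star$, so $W=V_\star$. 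I expect the main obstacles to be the lower‑semicontinuity/selection step (where $\cK$‑inf‑compactness together with the a priori bound $V_\star\le\Breve\cG_{\Hat v}^{(0)}\in\order(\rc)$ is exactly what is needed) and, in the uniqueness argument, deducing $c$‑stability of the $\Argmin$‑selectors from the drift inherited from the ACOE and justifying the iteration up to the regeneration time $\uuptau$ for functions in $\order(V_\star)$, i.e.\ showing $\Breve\Exp^{v}_{(x,0)}\bigl[\,|W|(\Breve X_{N\wedge\uuptau})\,\Ind_{\{\uuptau>N\}}\bigr]\to 0$.
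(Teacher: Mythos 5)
Your proposal is correct and follows essentially the same route as the paper: one-step analysis of the first-passage representation on the split chain, un-splitting to obtain \cref{ET3.1A} with $\nu(V_\star)=\beta$ supplied by \cref{L3.1}, lower semicontinuity and existence of selectors from strong continuity plus $\cK$-inf-compactness (you argue Berge's theorem by hand where the paper cites \cite{Feinberg-13}), and uniqueness by comparing both solutions with their stochastic representations up to the regeneration time $\uuptau$ (the paper packages the $\le$ direction as a nonnegative supermartingale argument, but the transversality input (\cref{PT3.1F}) is the same one you flag). The obstacles you identify --- $c$-stability of the $\Argmin$-selectors and the vanishing of $\Breve\Exp^{v_\star}_{(x,0)}\bigl[\,\abs{W}(\Breve X_N)\Ind_{\{\uuptau>N\}}\bigr]$ for $W\in\order(V_\star)$ --- are exactly the points the paper's proof also has to address, and your sketches for them are adequate.
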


\begin{proof}
As in \cref{E-stp1b,E-stp1b},  using standard dynamic programming arguments in place of  one step analysis, we obtain
\begin{equation}\label{PT3.1A}
\begin{aligned}
\Breve{V}_\star^{(0)}(x) &\,=\, \min_{u\in\cU(x)}\,
\biggl[\tfrac{\rc(x,u)}{1-\delta}  -\beta
+ \int_\cB\Breve{V}_\star^{(0)}(y)\bigl[P(\D{y}\,|\,x,u)-\delta\nu(\D{y})\bigr]\\
&\mspace{250mu}
+\tfrac{1}{1-\delta}\int_{\cB^c}\Breve{V}_\star^{(0)}(y)P(\D{y}\,|\,x,u)\biggr]\,,
\quad x\in\cB\,,
\end{aligned}
\end{equation}
and
\begin{equation}\label{PT3.1B}
\begin{aligned}
\Breve{V}_\star^{(0)}(x) &\,=\, \min_{u\in\cU(x)}\,
\biggl[\rc(x,u)  -\beta
+ (1-\delta)\int_\cB\Breve{V}_\star^{(0)}(y)P(\D{y}\,|\,x,u)
+\int_{\cB^c}\Breve{V}_\star^{(0)}(y)P(\D{y}\,|\,x,u)\biggr]
\end{aligned}
\end{equation}
for $x\in\cB^c$.  

On the other hand, since $\Breve{V}_\star^{(0)}=\Breve\cG_{v_{\zeta^\star}}^{(0)}$
$\nu$-a.e. by \cref{L3.1}, then \cref{E-stp1a} shows that
\begin{equation}\label{PT3.1C}
0\,=\,\Breve{V}_\star^{(1)}(x) \,=\, -\beta +
 (1 -\delta)\int_\cB\Breve{V}_\star^{(0)}(y)\nu(\D{y})\quad \forall\,x\in\cB\,.
\end{equation}
It then follows by \cref{PT3.1A,PT3.1B,PT3.1C} and
\cref{D3.2}, that
$V_\star$ satisfies
\begin{equation}\label{PT3.1D}
V_\star(x)\,=\,\min_{u\in\cU(x)}\;\bigl[\overline\rc(x,u)
+P_u V_\star(x)\bigr]\,.
\end{equation}
Since the kernel $P$ is strongly continuous
and $V_\star$ is bounded from below in $\XX$ by \cref{L2.2},
the map $(x,u)\mapsto P_u V_\star(x)$ is 
lower semi-continuous on $\KK$.
Therefore, since $\cU$ is upper semi-continuous, the map
$(x,u)\mapsto \overline\rc(x,u)+P_u V_\star(x)$ is $\cK$-inf-compact by
\cite[Lemma~2.1\,(i)]{Feinberg-13}.
Hence, applying Theorem~2.1 of \cite{Feinberg-13} to \cref{PT3.1D},
we deduce that  $V_*\in\fL(\XX)$.
Since $V_\star$ is bounded from below in $\XX$ by \cref{L2.2}, existence and optimality
of $v_\star$ in \cref{E-veri} follows by a standard argument using Birkhoff's ergodic
theorem.

We continue with the proof of uniqueness.
Since $\Breve{V}_\star^{(0)}$ is bounded from below in $\XX$, it is standard to show,
using \cref{PT3.1B} and Fatou's lemma, that
\begin{equation}\label{PT3.1E}
\Breve{V}_\star^{(0)}(x) \,\ge\,
\Breve\Exp^{v_\star}_{(x,0)}\Biggl[\sum_{k=0}^{\uuptau - 1}
\bigl(\Breve\rc_{v_\star}(\Breve{X}_k)-\beta\bigr)\Biggr]\,,\qquad x\in\XX\,,
\end{equation}
with $v_\star$ as in \cref{E-veri}.
\cref{D3.2} shows that we must have equality in \cref{PT3.1E}.
In turn, applying Dynkin's formula to \cref{PT3.1B} we obtain
\begin{equation*}
\Breve{V}_\star^{(0)}(x) \,=\, \lim_{n\to\infty}\,
\Breve\Exp^{v_\star}_{(x,0)}\Biggl[\sum_{k=0}^{\uuptau\wedge n - 1}
\bigl(\Breve\rc_{v_\star}(\Breve{X}_k)-\beta\bigr)\Biggr]\,,\qquad x\in\XX\,,
\end{equation*}
and
\begin{equation}\label{PT3.1F}
\limsup_{n\to\infty}\, \Breve\Exp^{v_\star}_{(x,0)}\Bigl[
\Breve{V}_\star^{(0)} (\Breve{X}_{\uuptau})\Ind_{\{\uuptau> n\}}\Bigr]\,=\,0\,.
\end{equation}

Let $V\in\fL(\XX)\cap\order(V_\star)$ be a solution of \cref{ET3.1A}
and $\Hat{v}\in\Usm$ a selector from its minimizer.
Going to the split chain and scaling with an additive constant, we obtain
functions $\Breve{V}^{(i)}(x)$, $i=0,1$, which satisfy \cref{PT3.1A,PT3.1B,PT3.1C}
(with $\Breve{V}_\star^{(i)}$ replaced by $\Breve{V}^{(i)}$), and
\begin{equation*}
V(x)\,\df\,\begin{cases}
(1 -\delta)\Breve{V}^{(0)}(x)\,,
&\text{for\ }x\in\cB\,,\\[5pt]
\Breve{V}^{(0)}(x), &\text{otherwise}.
\end{cases}
\end{equation*}
In analogy to \cref{PT3.1E}, we also have
\begin{equation}\label{PT3.1G}
\Breve{V}^{(0)}(x) \,\ge\,
\Breve\Exp^{\Hat{v}}_{(x,0)}\Biggl[\sum_{k=0}^{\uuptau - 1}
\bigl(\Breve\rc_{\Hat{v}}(\Breve{X}_k)-\beta\bigr)\Biggr]
\,\ge\, \Breve{V}_\star^{(0)}(x)\,,\qquad x\in\XX\,,
\end{equation}
where for the second inequality we use \cref{D3.2}.
Thus, if $v_\star$ is as in \cref{E-veri}, then using the kernel $Q_{v_\star}$ of
the split chain in \cref{E-Q}, we deduce that
$\Breve{V}^{(i)}-\Breve{V}_\star^{(i)}$ is a nonnegative local supermartingale under
$Q_{v_\star}$.
Since $\Breve{V}^{(1)}=\Breve{V}_\star^{(1)}=0$,
and $V\in\order(V_\star)$, using Dynkin's formula,
we obtain from \cref{PT3.1F} and the supermartingale
inequality that
\begin{equation}\label{PT3.1H}
\Breve{V}^{(0)}(x)-\Breve{V}_\star^{(0)}(x)\,\le\,0\qquad\forall\,x\in\XX\,.
\end{equation}
Therefore,
$\Breve{V}^{(0)}=\Breve{V}_\star^{(0)}$ on $\cX$ by \cref{PT3.1G,PT3.1H}.
This completes the proof.
\end{proof}

\begin{remark}
If we relax the strong Feller hypothesis in \cref{A3.1}, and assume instead that
the transition kernel is weak Feller, we can obtain an ACOI
with a lower semi-continuous potential function.
Indeed, if we let
\begin{equation*}
\widetilde{V}_\star(x) \,\df\, \sup_{r>0}\, \inf_{y\in B_r(x)}\, V_\star(y)\,,
\end{equation*}
with $B_r(x)$ denoting the open ball of radius $r$ centered at $x$,
then $\widetilde{V}_\star\in\fL(\XX)$.
Therefore, by \cref{E-stp3} we have
\begin{equation}\label{ER3.5A}
V_\star(x)\,\ge\, \inf_{u\in\cU(x)}\,\bigl[ \overline\rc(x,u)
+ P_u V_\star(x)\bigr]\,\ge\,
\inf_{u\in\cU(x)}\,\bigl[ \overline\rc(x,u)+ P_u \widetilde{V}_\star(x)\bigr]\,,
\end{equation}
and the term on the right-hand side of \cref{ER3.5A} is in $\fL(\XX)$.
Since $\widetilde{V}_\star$ is the largest lower semi-continuous function
dominated by $V_\star$ \cite{Vega-Amaya-18}, we obtain
\begin{equation*}
\widetilde{V}_\star(x)\,\ge\,
\inf_{u\in\cU(x)}\,\bigl[ \overline\rc(x,u)+ P_u \widetilde{V}_\star(x)\bigr]\,.
\end{equation*}
It is standard to show that any measurable selector from this equation is optimal.
We refer the reader to \cites{Jaskiewicz-06,Vega-Amaya-18} on how to improve
this to an ACOE under additional hypotheses.
\end{remark}

\begin{remark}
Our approach differs from the standard approach of deriving the Bellman equation
using a vanishing discount argument. We briefly indicate here how near-monotonicity
or inf-compactness of the cost function can help us with the standard methodology.
One important consequence of near-monotonicity is that we can prove that the
discounted value function attains its minimum on a fixed compact set as the
discount parameter varies.
Thus if we can establish equicontinuity of the relative discounted value functions
as the discount factor varies (e.g., using convexity when available as in \cite{FAM-92},
or in \cref{Ex6.1} in \cref{S6}),
one can argue that as the discount parameter tends to $1$, the relative discounted
value functions either remain bounded on compacts or tend to infinity uniformly
on compacts along a subsequence.
Eliminating the latter possibility by a suitable choice of the offset in the
definition of the relative discounted value function shows uniform boundedness
over compacts.
This idea is used in \cite{Agarwal-08} for deriving the Bellman equation for
the average cost for a specific class of problems, and can potentially be generalized.
See also \cite[Theorem~6]{Feinberg-12}.
\end{remark}

\begin{remark}
It is  worth noting that the derivation of the Bellman equation \cref{PT3.1D}
does not require the strong continuity of \cref{A3.1}; weak continuity will suffice.
We do, however, require strong continuity in order to obtain
a solution in the class $\fL(\XX)$.
\end{remark}

\section{The value iteration}\label{S4}

Throughout this section as well as \cref{S6}, $v_\star\in\Usms$ is
some optimal stationary Markov strategy which is kept fixed.

\subsection{The value iteration algorithm}

We start with the following definition.

\begin{definition}
\textbf{[Value Iteration]}
Given $\Phi_0\in\fL(\XX)$ which serves as an initial condition, we define
the value iteration (VI) by
\begin{equation}\label{E-VI}
\Phi_{n+1}(x)\,=\,\overline\cT\Phi_{n}(x)\,\df\,\min_{u\in\cU(x)}\;\bigl[\overline\rc(x,u)
+P_u\Phi_{n}(x)\bigr],\quad n\in\NN_0\,.
\end{equation}
\end{definition}

Since $\overline\cT\colon \fL(\XX)\to\fL(\XX)$, it is clear that
the algorithm lives in the space of lower semi-continuous functions
which are bounded from below in $\XX$.
It is also clear that $\overline\cT$ is a monotone operator on $\fL(\XX)$,
that is, for any $f, f'\in\fL(\XX)$ with $f\le f'$, we have
$\overline\cT f \le \overline\cT f'$.

\subsubsection{The value iteration for the split chain}
Using  \cref{E-Q1,E-Q2,E-Q3} we can also express the algorithm via
the split chain as follows.
The \emph{value iteration functions}
$\bigl\{\Breve\Phi_n^{(i)}\,,\,n\in\NN_0\,,i=0,1\bigr\}$, are defined as follows.
Let $V_0\colon\XX\to\RR$ be a nonnegative continuous function.
The initial condition is $\Breve\Phi_0^{(1)}=0$,
and $\Breve\Phi_0^{(0)}(x) = V_0(x)\bigl[ (1-\delta)^{-1}\Ind_{\cB}(x)+
\Ind_{\cB^c}(x)\bigr]$,
and for each $n\in\NN$, define
\begin{equation}\label{E-Phispl}
\Phi_n(x)\,=\,
\begin{cases}
(1 -\delta)\Breve\Phi_n^{(0)}(x) +\delta\Breve\Phi_n^{(1)}\,,
&\text{for\ }x\in\cB\,,\\[5pt]
\Breve\Phi_n^{(0)}(x), &\text{otherwise}.
\end{cases}
\end{equation}
Thus, the algorithm takes the form
\begin{eqnarray}
\Breve\Phi_{n+1}^{(0)}(x) &=& 
\frac{1}{1 -\delta}\,\min_{u\in\cU(x)}\;\biggl[\overline\rc(x,u)
+ \int_{\XX}\Phi_{n}(y)\,P(\D{y}\,|\,x,u)\biggr] \nonumber \\
&& - \ \frac{\delta}{1 -\delta}\int_\cB \Phi_n(y)\,\nu(\D{y})\,,\quad x\in\cB\,, \label{one}
\\
\Breve\Phi_{n+1}^{(0)}(x) &=& \min_{u\in\cU(x)}\;\biggl[\overline\rc(x,u)
+ \int_{\XX}\Phi_{n}(y)\,P(\D{y}\,|\,x,u)\biggr]\,,\quad x\in\cB^c\,, \label{two}
\\
\Breve\Phi_{n+1}^{(1)}(x) &=& -\beta+
 \int_\cB \Phi_n(y)\,\nu(\D{y})\,,\qquad x\in\cB\,. \label{three}
\end{eqnarray}

\medskip

\begin{notation}\label{N4.1}
We adopt the following simplified notation.
We let $\Hat{v}_n\in\Usm$ be
a measurable selector from the minimizer of  \cref{E-VI}, and
define
\begin{equation}\label{EN4.1A}
\widehat{P}_n(\cdot\,|\,x) \,\df\, P\bigl(\cdot\,|\,x,\Hat{v}_n(x)\bigr)\,,\quad\text{and}\quad
\widehat{\rc}_n(x) \,\df\, \rc(x, \Hat{v}_n(x))-\beta\,.
\end{equation}
Note that these depend on the initial value $\Phi_0$.

We fix an optimal strategy $v_\star\in\Usms$, and
let $P_{\star}(\D{y}\,|\,x)$ denote the transition kernel
under $v_\star$.
In addition, we let $\rc_\star(x) = \rc\bigl(x,v_\star(x)\bigr)$
and $\overline\rc_\star = \rc_\star-\beta$.
\end{notation}
With this notation, for $n\in\NN_0$, we have
\begin{equation}\label{ELC01}
\begin{aligned}
\Phi_{n+1}(x) &\,=\,\min_{u\in\cU(x)}\,
\bigl[\overline\rc(x,u) +P_u\Phi_{n}(x)\bigr]\\[5pt]
&\,=\,\widehat{\rc}_n(x) +\widehat{P}_n \Phi_{n}(x)\qquad\forall x\in\XX\,,
\end{aligned}
\end{equation}
and
\begin{equation}\label{ELC02}
V_{\star}(x)\,=\,\overline\rc_\star(x) +P_\star V_{\star}(x)\qquad\forall x\in\XX\,.
\end{equation}
It follows from optimality of $v_{\star}$ and $\Hat{v}_n$ that
\begin{equation}\label{ELC03}
\Phi_{n+1}\,\le\,\overline\rc_\star +P_\star \Phi_n\,,
\end{equation}
and
\begin{equation}\label{ELC04}
V_{\star}\,\le\,\widehat{\rc}_n + \widehat{P}_n V_{\star}\,.
\end{equation}

\subsection{General results on convergence of the VI}\label{S4.2}

Recall the function $V_\star$ from \cref{T3.1}, and let
$\uppi_*$ denote the associated invariant probability measure.
Consider the following hypothesis.

\medskip

\begin{itemize}
\item[\hypertarget{H1}{\textbf{(H1)}}]
$\uppi_\star(V_\star)<\infty$.
\end{itemize}

\medskip

For $\rc$ bounded, finiteness of the second moments of $\uuptau$ implies
\hyperlink{H1}{(H1)} (see, for example, \cite[p.~66]{Borkar-91}).
In general, \hyperlink{H1}{(H1)} is equivalent to the finiteness of
the second moments
of the modulated first hitting times to $\cB\times\{1\}$ on a full and absorbing
set.

For a constant $\kappa\in\RR$ we define the set
\begin{equation}\label{E-cV}
\cV(\kappa) \,\df\, \bigl\{f\in\fL(\XX)\cap\order(V_\star)\,\colon
f\ge V_\star-\kappa\,,\ \uppi_\star(f) \le \kappa+1\bigr\}\,.
\end{equation}

Under \hyperlink{H1}{(H1)},
 we show that the VI converges pointwise for any $\Phi_0\in\cV(\kappa)$.
In order to prove this result, we need the following lemma.

\begin{lemma}\label{L4.1}
Under \textup{\hyperlink{H1}{(H1)}},
if $\Phi_0\in\cV(\kappa)$ for some $\kappa\in\RR$, then $\Phi_n\in\cV(\kappa)$
for all $n\in\NN$, or in other words, the set
$\cV(\kappa)$ is invariant under the action of $\overline\cT$.
In addition, $\uppi_\star(\Phi_{n+1}) \le \uppi_\star(\Phi_{n})$ for all
$n\in\NN_0$.
\end{lemma}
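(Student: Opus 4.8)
The plan is an induction on $n$, with base case $n=0$ supplied by the hypothesis $\Phi_0\in\cV(\kappa)$; thus it suffices to show that $\cV(\kappa)$ is invariant under $\overline\cT$ and that $\uppi_\star(\overline\cT f)\le\uppi_\star(f)$ on $\cV(\kappa)$, after which both assertions follow by iterating. First I would record two elementary facts about $\overline\cT$: it is monotone, and it commutes with the addition of constants, i.e.\ $\overline\cT(f+a)=\overline\cT f+a$ for $a\in\RR$, since each $P_u$ is a probability kernel. Combined with the Bellman equation \cref{ET3.1A}, which reads $\overline\cT V_\star=V_\star$, these give $\overline\cT(V_\star-\kappa)=V_\star-\kappa$, so if $\Phi_n\ge V_\star-\kappa$ then $\Phi_{n+1}=\overline\cT\Phi_n\ge\overline\cT(V_\star-\kappa)=V_\star-\kappa$; the lower-bound constraint thus propagates. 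Membership $\Phi_{n+1}\in\fL(\XX)$ is immediate since $\overline\cT\colon\fL(\XX)\to\fL(\XX)$.

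The heart of the argument is the behaviour of $\uppi_\star$-integrals. I would first integrate \cref{ELC02}, $V_\star=\overline\rc_\star+P_\star V_\star$, against the invariant measure $\uppi_\star$. Since $V_\star$ is bounded below, $\uppi_\star(P_\star V_\star)=\uppi_\star(V_\star)$ (apply Tonelli to the nonnegative function $V_\star-\inf_\XX V_\star$ and use invariance), so $\uppi_\star(V_\star)=\uppi_\star(\overline\rc_\star)+\uppi_\star(V_\star)$; here \textup{\hyperlink{H1}{(H1)}} enters crucially, since the finiteness of $\uppi_\star(V_\star)$ is exactly what licenses the cancellation, yielding $\uppi_\star(\overline\rc_\star)=0$. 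Next I would integrate \cref{ELC03}, $\Phi_{n+1}\le\overline\rc_\star+P_\star\Phi_n$, against $\uppi_\star$; all terms are bounded below (by $\inf_\XX V_\star-\kappa$ and $1-\beta$, using the lower bound already propagated), so additivity of the integral causes no trouble, and invariance again gives $\uppi_\star(P_\star\Phi_n)=\uppi_\star(\Phi_n)$. Hence
\[
\uppi_\star(\Phi_{n+1})\,\le\,\uppi_\star(\overline\rc_\star)+\uppi_\star(\Phi_n)\,=\,\uppi_\star(\Phi_n)\,,
\]
which is the asserted monotonicity; iterating from $\uppi_\star(\Phi_0)\le\kappa+1$ gives $\uppi_\star(\Phi_n)\le\kappa+1$ for every $n$.

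It remains to propagate $\Phi_n\in\order(V_\star)$. I would sandwich: from the previous two paragraphs $V_\star-\kappa\le\Phi_{n+1}\le\overline\rc_\star+P_\star\Phi_n$, so it is enough that both envelopes lie in $\order(V_\star)$. The lower one does, trivially. For the upper one, \cref{ELC02} gives $P_\star V_\star=V_\star-\overline\rc_\star$ and $\overline\rc_\star=V_\star-P_\star V_\star\le V_\star-\inf_\XX V_\star$, so $\overline\rc_\star\in\order(V_\star)$ (it is bounded below, as $\rc\ge 1$ forces $\overline\rc_\star\ge 1-\beta$); since $V_\star$ is bounded below, $P_\star\abs{V_\star}$ is dominated by an affine function of $V_\star$, so $P_\star$ maps $\order(V_\star)$ into itself, whence $\overline\rc_\star+P_\star\Phi_n\in\order(V_\star)$ whenever $\Phi_n\in\order(V_\star)$. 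This closes the induction.

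I expect the integral bookkeeping in the second step to be the only real subtlety: the iterates $\Phi_n$ are known a priori only to be lower semicontinuous and bounded below, not integrable, so one must verify that no expression of the form $\infty-\infty$ arises. This is precisely where \textup{\hyperlink{H1}{(H1)}} is needed — to make $\uppi_\star(V_\star)$ finite and hence to legitimise the cancellation producing $\uppi_\star(\overline\rc_\star)=0$ — and where the running lower bound $\Phi_n\ge V_\star-\kappa$ is needed to keep $\uppi_\star(\Phi_n)$ and $\uppi_\star(P_\star\Phi_n)$ well defined in $(-\infty,\infty]$ at every stage.
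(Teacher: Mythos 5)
Your proof is correct and follows essentially the same route as the paper: both arguments rest on the two one-sided inequalities obtained by comparing the optimality relations for $\Phi_{n+1}$ and $V_\star$ (the paper's \cref{EL4.1A,EL4.1B}), using one to propagate the pointwise lower bound and integrating the other against the invariant measure $\uppi_\star$, with \hyperlink{H1}{(H1)} licensing the cancellation. The only cosmetic difference is that you propagate membership in $\order(V_\star)$ by sandwiching between $V_\star-\kappa$ and $\overline\rc_\star+P_\star\Phi_n$, whereas the paper gets a uniform upper bound on $\Phi_n-V_\star$ from the $f$-norm ergodic theorem; both are valid.
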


\begin{proof}
Subtracting \cref{ELC02} from \cref{ELC03} we obtain
\begin{equation}\label{EL4.1A}
\Phi_{n+1} - V_{\star}\,\le\, P_{\star}(\Phi_n - V_{\star})\,,
\end{equation}
while by subtracting \cref{ELC04} from \cref{ELC01} we have
\begin{equation}\label{EL4.1B}
\Phi_{n+1}-V_{\star} \,\ge\,\widehat{P}_n(\Phi_{n}-V_{\star})\,.
\end{equation}
 Applying (14.4) of  \cite[Theorem~14.0.1]{Meyn-09}) to  $\frac{g(\cdot)}{\|g\|_{V_*}}$ for any $g \in \order(V_\star)$, we have:  there exists a constant $\widetilde{m}(x)$ depending on $x$ such that
\begin{equation*}
\norm{P_*^n g(x)}^{}_{V_{\star}} \,\le\, \widetilde{m}(x) \norm{g}^{}_{V_{\star}} + \frac{\uppi_{\star}(g)}{V_{\star}(x) + 1}
\quad\,\forall x\in\XX\,,\quad\forall\,n\in\NN.
\end{equation*}
Therefore $\uppi_\star(\Phi_{n+1}) \le \uppi_\star(\Phi_{n})$ by \cref{EL4.1A}. From this it follows by induction that $\uppi_\star(\Phi_n) \le \kappa + 1 \ \forall \ n \geq 0,$ if it is so for $n=0$. Likewise, $\Phi_{n+1}-V_{\star}\ge \inf_{\XX}\,(\Phi_{n}-V_{\star})$ by
\cref{EL4.1B}. From this it follows by induction that $\phi_n \ge V_{\star} - \kappa \ \forall \ n \geq 0,$ if it is so for $n=0$.

The result then follows from these.
\end{proof}

\begin{theorem}\label{T4.1}
Assume \textup{\hyperlink{H1}{(H1)}}, and suppose $\Phi_0\in\cV(\kappa)$
for some $\kappa\in\RR$.
Then the following hold
\begin{equation}\label{ET4.1A}
\Phi_n \,\xrightarrow[n\to\infty]{}\, V_\star +
\lim_{n\to\infty}\,\uppi_\star(\Phi_n - V_{\star})
\qquad\text{in\ \ } \cL^1(\XX;\uppi_\star)\quad\text{and\ \ }\uppi_\star\text{-a.s.}\,.
\end{equation}
Also,
\begin{equation}\label{ET4.1B}
\lim_{n\to\infty}\;\babs{\nu(\Phi_n) -\uppi_\star(\Phi_n - V_{\star})}
\,=\,\beta\,.
\end{equation}
\end{theorem}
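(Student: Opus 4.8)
The plan is to combine the monotone-decreasing behavior of $\uppi_\star(\Phi_n)$ established in \cref{L4.1} with the contraction-type estimate \cref{EL4.1A} to pin down the $\cL^1$ and almost-sure limit, and then to read off \cref{ET4.1B} from the split-chain form of the iteration. First I would set $g_n\df\Phi_n-V_\star$, which by \cref{L4.1} lies in $\order(V_\star)$, is bounded below by $-\kappa-\inf_\XX(\Phi_0-V_\star)$ uniformly in $n$ (from \cref{EL4.1B}), and satisfies $\uppi_\star(g_n)\searrow\ell$ for some $\ell\ge 0$ by the monotonicity part of \cref{L4.1} together with $\uppi_\star(g_n)\ge\inf_\XX g_n>-\infty$. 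The key inequality is $g_{n+1}\le P_\star g_n$ from \cref{EL4.1A}; iterating, $g_{n+k}\le P_\star^k g_n$ for all $k$, while $g_{n+k}\ge\inf_\XX g_n\wedge\cdots$ stays bounded below. Since $P_\star$ is positive Harris with invariant measure $\uppi_\star$ and $g_n\in\order(V_\star)$, the $f$-norm ergodic theorem (already invoked in the proof of \cref{L4.1}) gives $P_\star^k g_n\to\uppi_\star(g_n)$ in $\cL^1(\uppi_\star)$ and $\uppi_\star$-a.s.\ as $k\to\infty$. Passing to the limit in $g_{n+k}\le P_\star^k g_n$ and using that $\uppi_\star(g_n)\downarrow\ell$, one squeezes $\limsup_n g_n\le\ell$ a.s.; the matching lower bound $\liminf_n g_n\ge\ell$ follows because $\uppi_\star(g_n)\to\ell$ forces the $\cL^1$-limit (if it exists) to be $\ell$, so it remains to upgrade a.s.\ $\limsup\le\ell$ plus $\uppi_\star(g_n)\to\ell$ to actual convergence. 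That upgrade uses the uniform lower bound on $g_n$ and a Fatou/Scheffé argument: $g_n+C\ge0$ with $\uppi_\star(g_n+C)\to\ell+C$ and $\limsup(g_n+C)\le\ell+C$ a.s.\ together imply $g_n+C\to\ell+C$ in $\cL^1$ and a.s.\ along the full sequence (a nonnegative sequence whose integrals converge to the integral of its $\limsup$ converges).

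Once $\Phi_n\to V_\star+\ell$ with $\ell=\lim_n\uppi_\star(\Phi_n-V_\star)$ is in hand, \cref{ET4.1B} comes from the third line of the split-chain recursion, $\Breve\Phi_{n+1}^{(1)}(x)=-\beta+\nu(\Phi_n)$ for $x\in\cB$, combined with \cref{PT3.1C} which gives the analogous identity $0=-\beta+(1-\delta)\nu(\Breve V_\star^{(0)})$, i.e.\ $\nu(V_\star)=\beta$ after accounting for the $(1-\delta)$ weight on $\cB$ in the definitions of $V_\star$ and $\Breve V_\star^{(0)}$ (this identity is also recorded just after \cref{E-stp3}: $\int_\cB\cG_{\Hat v}(y)\nu(\D y)=\beta$). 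Thus $\nu(\Phi_n)=\nu(V_\star)+\nu(g_n)\to\beta+\nu(V_\star+\ell\text{-correction})$; more carefully, writing $\Phi_n=V_\star+g_n$ on $\cB$ up to the weighting and using $g_n\to\ell$ in $\cL^1(\uppi_\star)$ — hence, since $\cB$ carries positive $\uppi_\star$-mass and $\nu\ll\uppi_\star$ restricted to $\cB$ is not automatic, one instead argues directly: $\nu(\Phi_n)-\uppi_\star(g_n)\to\nu(V_\star)=\beta$ by dominated convergence for $\nu(\Phi_n-V_\star)$ using the uniform $\order(V_\star)$ bound on $g_n$ and $\nu(V_\star)<\infty$ (finite because $V_\star$ is locally bounded and $\nu(\cB)=1$ with $\cB$ bounded). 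Then $\abs{\nu(\Phi_n)-\uppi_\star(\Phi_n-V_\star)}\to\abs{\beta}=\beta$.

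The main obstacle I anticipate is the passage from "$\limsup g_n\le\ell$ a.s.\ and $\uppi_\star(g_n)\to\ell$" to genuine convergence in both senses: the sequence $g_n$ is not monotone pointwise (only its integral is), so one must be careful to deploy the right integrability control — the uniform lower bound from \cref{EL4.1B} and the uniform $\norm{\cdot}_{V_\star}$ bound from \cref{L4.1} — to invoke a reverse-Fatou or uniform-integrability argument rather than naive monotone convergence. A secondary subtlety is justifying the interchange of limits in $g_{n+k}\le P_\star^k g_n$ (first $k\to\infty$ for fixed $n$, then $n\to\infty$), which requires the $f$-norm ergodic estimate to be uniform in the "base point" $n$; this is exactly the bound $P_\star^k g\le\widetilde m\norm{g}_{V_\star}$ from \cref{L4.1}, so it is available, but the bookkeeping must be done cleanly. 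The remaining steps — identifying $\nu(V_\star)=\beta$ and taking absolute values — are routine given the displays already in \cref{S3.2}.
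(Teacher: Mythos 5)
Your argument breaks down at exactly the step you flag as the main obstacle, and the repair you propose does not work. From $g_{n+k}\le P_\star^k g_n$ and the $f$-norm ergodic theorem you correctly obtain $\limsup_n g_n\le\ell$ $\uppi_\star$-a.s., and reverse Fatou (using the uniform bounds from \cref{L4.1} and the iterated \cref{EL4.1A}) then upgrades this to $\limsup_n g_n=\ell$ a.s. But the principle you invoke to finish --- ``a nonnegative sequence whose integrals converge to the integral of its $\limsup$ converges'' --- is false. Take $h_n=\Ind_{A_n}$ on a probability space with $\mu(A_n)\to 1$, $\sum_n\mu(A_n^c)=\infty$, and the $A_n^c$ chosen so that a.e.\ point lies in infinitely many of them (e.g.\ independent events): then $\limsup_n h_n=1$ a.e., $\int h_n\,\D\mu\to 1=\int\limsup_n h_n\,\D\mu$, the sequence is uniformly bounded, yet $\liminf_n h_n=0$ a.e.\ and $h_n$ converges neither a.e.\ nor in any pointwise sense. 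Fatou only gives $\uppi_\star(\liminf_n g_n)\le\ell$, which is the wrong direction, and the companion inequality $g_{n+1}\ge\widehat{P}_n g_n$ from \cref{EL4.1B} involves the time-varying kernels $\widehat{P}_n$, for which no ergodic theorem is available to produce the matching bound $\liminf_n g_n\ge\ell$. So the a.s.\ (and hence the $\cL^1$) convergence in \cref{ET4.1A} is not established by your route.

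The idea your proof is missing is the one the paper uses: work on the stationary two-sided process $\{X^\star_n\}_{n\in\ZZ}$ under $v_\star$ and observe that \cref{EL4.1A} makes $k\mapsto \bigl(\Phi_{-k}-V_\star\bigr)(X^\star_{k})$, $k\le 0$, a \emph{backward submartingale} for the filtration $\sigma(X^\star_m,\ m\le k)$; the uniform $\cL^1(\uppi_\star)$ bound from \cref{L4.1} then yields a.s.\ and mean convergence by the backward submartingale convergence theorem, and ergodicity of the stationary process forces the limit to be constant. This theorem is precisely the device that converts ``convergence of the means $\uppi_\star(g_n)$'' into pointwise convergence, which no Fatou/Scheff\'e manipulation can do. Your treatment of \cref{ET4.1B} is also more convoluted than needed: by \hyperlink{A0}{(A0)} one has $\nu\le(\delta\,\uppi_\star(\cB))^{-1}\uppi_\star$, so $\cL^1(\uppi_\star)$ convergence passes directly to $\cL^1(\cB;\nu)$, and $\nu(V_\star)=\beta$ from \cref{PT3.1C} finishes it --- but this part is contingent on first securing \cref{ET4.1A}.
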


\begin{proof}
Let $\{X_n^{\star}\}_{n\in\ZZ}$ denote the stationary optimal process controlled
by $v_{\star}$.
If $\Phi_0\in\cV(\kappa)$, we have
\begin{equation*}
\sup_{n\in\NN}\,\int \abs{\Phi_{n}(x) - V_{\star}(x)}\,\uppi_\star (\D{x})\,<\,\infty
\end{equation*}
by \cref{L4.1}.
Then \cref{EL4.1A} implies that the process
\begin{equation*}
M_k\,\df\,\bigl\{ \Phi_{-{k}}(X^{\star}_{k})
- V_{\star}(X^{\star}_{k})\bigr\}_{k\le 0}
\end{equation*}
is a backward submartingale with respect to the filtration
$\{\cF_k\}_{k\le 0}\df
\bigl\{\sigma\bigl(X^{\star}_{\ell}\,,\; \ell\le k\bigr)\bigr\}_{k\le 0}$.
By Corollary V-3-13, p.\ 119, \cite{Neveu}, $M_k$ converges
a.s.\ and in the mean to some random variable $M^*$.
The latter implies the convergence in $\cL^1(\XX;\uppi_\star)$ claimed
in \cref{ET4.1A}.
By  the ergodicity of $\{X_n^{\star}\}_{n\in\ZZ}$ the $M^*$ is a constant
$\uppi_\star$-a.s. This is because $M^*$ is measurable with respect to the tail $\sigma$-field $\cap_{k\leq 0}\sigma(X^{\star}_m, m \leq k)$ which is a.s.\ trivial by the ergodicity of {\color{red} $\{X^{\star}_{-n}\}$.}

Convergence of $\Phi_n$ in $\cL^1(\XX;\uppi_\star)$, and hence
also in $\cL^1(\cB;\nu)$ by \hyperlink{A0}{(A0)}, implies that $\nu(\Phi_n)$ converges.
Since $\nu(V_\star)=\beta$ by \cref{PT3.1C}, \cref{ET4.1B} then follows from \cref{ET4.1A}.
\end{proof}

\begin{corollary}\label{C4.1}
Assume \textup{\hyperlink{H1}{(H1)}}, and suppose that $V_\star$ is bounded.
Then $\Phi_n(x)-V_\star(x)$ converges to a constant
$\uppi_\star$-a.e.\  as $n\to\infty$
for any initial condition $\Phi_0\in\fL_b(\XX)$.
\end{corollary}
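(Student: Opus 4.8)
The plan is to reduce the general bounded initial condition $\Phi_0\in\fL_b(\XX)$ to the situation already covered by \cref{T4.1}, by showing that any such $\Phi_0$ lands in some $\cV(\kappa)$ after we absorb a large enough constant. First I would note that when $V_\star$ is bounded, $\order(V_\star)=\cM_b(\XX)$, so $\fL_b(\XX)\subset\fL(\XX)\cap\order(V_\star)$ automatically; in particular every iterate $\Phi_n$ lies in $\order(V_\star)$ and $\uppi_\star(V_\star)\le\norm{V_\star}_\infty<\infty$, so \hyperlink{H1}{(H1)} is in force and all the machinery of \cref{L4.1,T4.1} applies. The point is then simply that boundedness of $V_\star$ lets us pick $\kappa$ uniformly.

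Here are the steps. \textbf{Step 1.} Given $\Phi_0\in\fL_b(\XX)$, set $\kappa\df \norm{\Phi_0}_\infty+\norm{V_\star}_\infty+1$. Then $\Phi_0\ge -\norm{\Phi_0}_\infty\ge V_\star-\kappa$, and $\uppi_\star(\Phi_0)\le\norm{\Phi_0}_\infty\le\kappa+1$, so $\Phi_0\in\cV(\kappa)$. \textbf{Step 2.} Apply \cref{T4.1} directly: under \hyperlink{H1}{(H1)}, since $\Phi_0\in\cV(\kappa)$, we have $\Phi_n\to V_\star+\lim_{n\to\infty}\uppi_\star(\Phi_n-V_\star)$ in $\cL^1(\XX;\uppi_\star)$ and $\uppi_\star$-a.s.; equivalently $\Phi_n-V_\star$ converges $\uppi_\star$-a.e.\ to the constant $\lim_{n\to\infty}\uppi_\star(\Phi_n-V_\star)$, which exists by the monotonicity $\uppi_\star(\Phi_{n+1})\le\uppi_\star(\Phi_n)$ from \cref{L4.1} together with the lower bound $\uppi_\star(\Phi_n-V_\star)\ge\inf_\XX(\Phi_0-V_\star)\ge-\kappa$. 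This is exactly the claim.

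Honestly, the corollary is almost immediate from \cref{T4.1}; the only thing to verify is the membership $\Phi_0\in\cV(\kappa)$, which is where boundedness of $V_\star$ is used in an essential way (for unbounded $V_\star$ one cannot in general absorb $\Phi_0-(V_\star-\kappa)$ by a single additive constant, nor guarantee $\Phi_0\in\order(V_\star)$, nor that \hyperlink{H1}{(H1)} transfers). So the ``main obstacle'' is really just bookkeeping: confirm $\fL_b(\XX)\subseteq\fL(\XX)\cap\order(V_\star)$ when $V_\star$ is bounded, and confirm that the normalizing constant in \cref{ET4.1A} is finite, which follows because $\uppi_\star(\Phi_n-V_\star)$ is nonincreasing in $n$ and bounded below by $\inf_\XX(\Phi_0-V_\star)>-\infty$. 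I would present the proof in two or three lines along exactly these lines.
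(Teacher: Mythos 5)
Your proposal is correct and matches the paper's own (one-line) proof: the paper likewise observes that boundedness of $V_\star$ puts every $\Phi_0\in\fL_b(\XX)$ into $\cV(\kappa)$ for some $\kappa$, and then invokes \cref{T4.1}. Your write-up simply makes explicit the choice of $\kappa$ and the verification of the defining conditions of $\cV(\kappa)$, which is exactly the intended bookkeeping.
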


\begin{proof}
This clearly follows from \cref{T4.1}, since
if $\Phi_0\in\fL_b(\XX)$, then $\Phi_0\in\cV(\kappa)$ for some $\kappa\in\RR$.
\end{proof}

\section{Relative value iteration}\label{S5}

We consider three variations of the
\emph{relative value iteration algorithm} (RVI).
All these start with initial condition $V_0\in\fL(\XX)$.

Let
\begin{equation*}
\cS f(x)\,\df\, \inf_{u\in\cU(x)}\;\bigl[\rc(x,u)+ P_u f(x)\bigr]\,,
\qquad f\in\fL(\XX)\,.
\end{equation*}
The iterates $\{V_n\}_{n\in\NN}\subset\fL(\XX)$
are defined by
\begin{equation}\label{E-RVIA}
V_{n}(x)\,=\, \cT\, V_{n-1}(x)\,\df\,
\cS V_{n-1}(x) - \nu\bigl(V_{n-1})\,,\qquad x\in\XX\,.
\end{equation}
An important variation of this is
\begin{equation}\label{E-RVIB}
\widecheck{V}_{n}(x)\,=\, \widecheck\cT\, \widecheck{V}_{n-1}(x)
\,\df\,\cS \widecheck{V}_{n-1}(x)
- \min_{\XX}\, \widecheck{V}_{n-1}\,, \qquad x\in\XX\,.
\end{equation}
Also, we can modify \cref{E-RVIA} to
\begin{equation}\label{E-RVIC}
\widetilde{V}_{n}(x)\,=\, \widetilde\cT\, \widetilde{V}_{n-1}(x)
\,\df\,\cS \widetilde{V}_{n-1}(x) - \widetilde{V}_{n-1}(\Hat{x})\,,\qquad x\in\XX\,,
\end{equation}
where $\Hat{x}\in\cB$ is some point that is kept fixed.

We let $\Hat{v}_n$ be a measurable selector from the
minimizer of \cref{E-RVIA,E-RVIB,E-RVIC} (note that all three minimizers
agree if the algorithms start with the same initial condition).
We refer to $\{\Hat{v}_n\}$
as the \emph{receding horizon} control sequence.

\begin{lemma}\label{L5.1}
Provided that $\Phi_0=V_0$, then we have
\begin{equation}\label{EL5.1A}
\Phi_n(x)-\Phi_n(y) \,=\,
V_n(x)-V_n(y)\qquad\forall\,x,y\in\XX\,,\quad\forall\,n\in\NN\,,
\end{equation}
and the same applies if $V_n$ is replaced by $\widecheck{V}_n$ or $\widetilde{V}_n$.
In addition, the convergence of $\{\Phi_n\}$
implies the convergence of $\{V_n\}$, and also that of
$\{\widecheck{V}_n\}$, $\{\widetilde{V}_n\}$ in the same space.
\end{lemma}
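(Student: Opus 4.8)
The plan rests on two elementary observations. Since $\overline\rc=\rc-\beta$, the operator $\cS$ and the operator $\overline\cT$ of \cref{E-VI} satisfy $\cS f=\overline\cT f+\beta$ for every $f\in\fL(\XX)$; and since each $P_u$ is a probability kernel, both $\overline\cT$ and $\cS$ commute with the addition of a scalar, i.e.\ $\overline\cT(f+a)=\overline\cT f+a$ and $\cS(f+a)=\cS f+a$ for $a\in\RR$. Hence one step of the VI \cref{E-VI} and one step of any of the RVI recursions \cref{E-RVIA,E-RVIB,E-RVIC} differ only by the subtraction of a scalar offset, namely $\nu(V_{n-1})$, $\min_{\XX}\widecheck{V}_{n-1}$, or $\widetilde{V}_{n-1}(\Hat{x})$, each of which, for a fixed iterate, is a constant.

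First I would prove \cref{EL5.1A} by induction on $n$. Since $\Phi_0=V_0$, the base case holds with $\Phi_0-V_0\equiv0$. Assuming $\Phi_n-V_n\equiv c_n$ for a constant $c_n$, the commutation property gives $\Phi_{n+1}=\overline\cT\Phi_n=\overline\cT V_n+c_n=\cS V_n-\beta+c_n$, whereas $V_{n+1}=\cS V_n-\nu(V_n)$; subtracting yields $\Phi_{n+1}-V_{n+1}=c_n-\beta+\nu(V_n)=:c_{n+1}$, again a constant. Thus $\Phi_n$ and $V_n$ differ by a constant for every $n$, which is exactly \cref{EL5.1A}; running the identical computation with $\min_{\XX}\widecheck{V}_n$, respectively $\widetilde{V}_n(\Hat{x})$, in place of $\nu(V_n)$ settles the claim for $\widecheck{V}_n$ and $\widetilde{V}_n$.

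For the convergence assertion I would make the offsets explicit. Substituting $\nu(V_n)=\nu(\Phi_n)-c_n$ in the recursion for $c_n$ collapses it to $c_{n+1}=\nu(\Phi_n)-\beta$, so that $V_n=\Phi_n-\nu(\Phi_{n-1})+\beta$ for all $n\in\NN$; the same manipulation gives $\widecheck{V}_n=\Phi_n-\min_{\XX}\Phi_{n-1}+\beta$ and $\widetilde{V}_n=\Phi_n-\Phi_{n-1}(\Hat{x})+\beta$. If $\Phi_n\to\Phi_\infty$ in the relevant space, then, since \hyperlink{A0}{(A0)} allows one to pass from that convergence to convergence of the $\nu$-average exactly as in the proof of \cref{T4.1}, we get $\nu(\Phi_{n-1})\to\nu(\Phi_\infty)$ and hence $V_n\to\Phi_\infty-\nu(\Phi_\infty)+\beta$ in the same space. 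For $\widecheck{V}_n$ and $\widetilde{V}_n$ the conclusion reduces to convergence of the scalar sequences $\min_{\XX}\Phi_{n-1}$ and $\Phi_{n-1}(\Hat{x})$; here I would exploit the monotonicity and boundedness of the VI iterates established in \cref{S4}, notably that $\inf_{\XX}(\Phi_n-V_\star)$ is nondecreasing and $\uppi_\star(\Phi_n)$ nonincreasing by (the proof of) \cref{L4.1}, together with the near-monotone/inf-compact structure which forces $\inf_{\XX}\Phi_n$ to be eventually attained on a fixed compact set on which $\Phi_n$ converges. Controlling these two scalar offsets, as opposed to the $\nu$-average which is immediate, is the step I expect to require the most care.
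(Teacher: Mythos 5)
Your proposal is correct and follows essentially the same route as the paper: show by the recursion that $\Phi_n$ and each RVI iterate differ by a spatial constant (the paper records this as $V_n-\Phi_n=n\beta-\sum_{k=0}^{n-1}\nu(V_k)$), then reduce convergence to convergence of the scalar offsets via $\nu(V_{n+1})=\nu(\Phi_{n+1})-\nu(\Phi_n)+\beta$, i.e.\ your $V_n=\Phi_n-\nu(\Phi_{n-1})+\beta$. If anything you are more explicit than the paper about the only delicate point, namely that for $\widecheck{V}_n$ and $\widetilde{V}_n$ one must separately control $\min_{\XX}\Phi_{n-1}$ and $\Phi_{n-1}(\Hat{x})$, which the paper dispatches with a single "analogously."
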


\begin{proof}
A straightforward calculation shows that
\begin{equation*}
V_n(x)-\Phi_n(x) \,=\, n\beta - \sum_{k=0}^{n-1} \nu(V_k)\,,
\end{equation*}
from which \eqref{EL5.1A} follows.
The proofs for $\widecheck{V}_n$ and $\widetilde{V}_n$ are completely analogous.

We have
\begin{equation*}
V_{n+1}(x)-\Phi_{n+1}(x) \,=\, V_n(x)-\Phi_n(x) +\beta - \nu(V_n)\,,
\end{equation*}
which implies that
\begin{equation*}
\nu(V_{n+1}) \,=\, \nu(\Phi_{n+1})-\nu(\Phi_n) +\beta\,.
\end{equation*}
Therefore,  convergence of $\{\Phi_n\}$ implies that
$\nu(V_{n})\to\beta$ as $n\to\infty$.
In turn, this implies the convergence of $\{V_n\}$ by \eqref{EL5.1A}.
In the case of $\{\widecheck{V}_n\}$  we obtain
$\min_{\XX}\,\widecheck{V}_n\to\beta$ as $n\to\infty$,
and analogously for $\{\widetilde{V}_n\}$.
\end{proof}

The following theorem, under
hypotheses (a)--(b), is a direct consequence of \cref{T4.1,C4.1,L5.1}.
Hypothesis \hyperlink{H2}{(H2)} is given in the beginning of the next section.

\begin{theorem}\label{T5.1}
Let one of the following assumptions be satisfied.
\begin{itemize}
\item[(a)]
\textup{\hyperlink{H1}{(H1)}} holds and $V_0\in\cV(\kappa)$ for some $\kappa\in\RR$.
Here, $\cV(\kappa)$ is as defined in \cref{E-cV}.
\item[(b)]
\textup{\hyperlink{H1}{(H1)}} holds, $V_\star$ is bounded, and $V_0\in\fL_b(\XX)$.
\item[(c)]
\textup{\hyperlink{H2}{(H2)}} holds and $V_0\in\fL(\XX)\cap\order(V_\star)$.
\end{itemize}
Then the value iteration functions in \cref{E-RVIA,E-RVIB,E-RVIC}
converge $\uppi_\star$-a.e.\ to $V_\star$ as $n\to\infty$.
In addition, if \ttup{c} holds, then
convergence is pointwise for all $x\in\XX$.
\end{theorem}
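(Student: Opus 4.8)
The plan is to route every assertion through the auxiliary value iteration $\{\Phi_n\}$ of \cref{E-VI} by means of \cref{L5.1}. With $\Phi_0=V_0$, each of $V_n$, $\widecheck V_n$, $\widetilde V_n$ differs from $\Phi_n$ by a deterministic additive constant, and \cref{L5.1} already records that once $\{\Phi_n\}$ converges so do the three relative iterates, with the offsets $\nu(\Phi_{n-1})$, $\min_\XX\Phi_{n-1}$, $\Phi_{n-1}(\Hat x)$ tending to $\beta$; since $\nu(V_\star)=\beta$ by \cref{PT3.1C}, this pins the common limit to $V_\star$. So it suffices to prove convergence of $\{\Phi_n\}$ under each of \textup{(a)}, \textup{(b)}, \textup{(c)}.

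Under \textup{(a)}, $\Phi_0\in\cV(\kappa)$ (see \cref{E-cV}), so \cref{T4.1} applies directly: $\Phi_n\to V_\star+c_\infty$ both $\uppi_\star$-a.e.\ and in $\cL^1(\XX;\uppi_\star)$, with $c_\infty\df\lim_n\uppi_\star(\Phi_n-V_\star)$, and by \hyperlink{A0}{\textup{(A0)}} also in $\cL^1(\cB;\nu)$, whence $\nu(\Phi_n)\to\beta+c_\infty$ — precisely the input \cref{L5.1} asks for. Under \textup{(b)}, $V_\star$ is bounded and $V_0\in\fL_b(\XX)$, so $V_0\in\cV(\kappa)$ for a suitable $\kappa$ and \cref{C4.1} (equivalently \cref{T4.1}) gives the same. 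This settles \textup{(a)} and \textup{(b)}.

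The real content is \textup{(c)}, where \hyperlink{H1}{\textup{(H1)}} and the $\cV(\kappa)$-membership of $V_0$ are dropped and \textup{(H2)} instead supplies a Lyapunov function $\mathcal W$ whose growth matches that of $\rc$ together with a Markov control rendering the chain geometrically ergodic. The first step is to record the standing consequences of \textup{(H2)} developed in \cref{S6}: \hyperlink{H1}{\textup{(H1)}} holds; $V_\star\in\order(\mathcal W)$, hence $\order(V_\star)\subset\order(\mathcal W)$ and $V_0\in\order(\mathcal W)$; the optimal chain under $v_\star$ is $\mathcal W$-geometrically ergodic; and, after a finite number of iterations, the receding-horizon controls $\{\Hat v_n\}$ obey a common geometric drift towards $\cB$ with the same $\mathcal W$. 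Granting these, two routes present themselves. The cleaner one is to observe that $\widecheck\cT f=\cS f-\min_\XX f$ is invariant under additive constants, and to show that on the resulting quotient it is a contraction in a $\mathcal W$-weighted span seminorm — the local minorization on $\cB$ together with the common geometric drift producing a contraction factor $\rho<1$ — whence $\widecheck V_n\to V_\star$ pointwise on $\XX$ at rate $\rho^{\,n}$ for every $V_0\in\fL(\XX)\cap\order(V_\star)$, and likewise for $V_n$ and $\widetilde V_n$ via \cref{L5.1}. The second route works at the level of $\{\Phi_n\}$: subtracting \cref{ELC02} from \cref{ELC03} and iterating yields $\Phi_n-V_\star\le P_\star^{\,n}(\Phi_0-V_\star)$, which $\mathcal W$-geometric ergodicity drives to $\uppi_\star(\Phi_0-V_\star)$ at every $x$, while subtracting \cref{ELC04} from \cref{ELC01} and iterating yields the lower bracket $\Phi_n-V_\star\ge\widehat P_{n-1}\dotsm\widehat P_0(\Phi_0-V_\star)$; one then must show this lower bracket settles to the same constant for every $x$, after which $\Phi_n\to V_\star+\uppi_\star(\Phi_0-V_\star)$ pointwise on $\XX$ and \cref{L5.1} finishes.

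The step I expect to be the genuine obstacle is the contraction estimate itself — equivalently, the uniform control of the time-inhomogeneous product $\widehat P_{n-1}\dotsm\widehat P_0$ — in the unbounded-cost regime. The kernels $\widehat P_k$ come from distinct, a priori unrelated receding-horizon controls, so they share no common invariant measure and the classical bounded-cost Doeblin span contraction is unavailable; what makes the argument close is exactly that the Lyapunov function supplied by \textup{(H2)} has the \emph{same} growth as the running cost, which — after finitely many steps — forces every $\widehat v_k$ into a common geometric-drift/small-set pair and thereby yields a uniform geometric coupling estimate (and with it the computable error bound). The same feature keeps $\{\Phi_n-V_\star\}$ inside a bounded subset of $\order(\mathcal W)$, so that, if one prefers, \cref{T4.1} can be applied from a late index to recover the $\uppi_\star$-a.e.\ statement; it is then the uniform estimate, not available under \hyperlink{H1}{\textup{(H1)}} alone, that upgrades ``$\uppi_\star$-a.e.'' to ``everywhere'' and supplies the rate — which is why parts \textup{(a)}--\textup{(b)} must instead rely on the backward-submartingale mechanism of \cref{T4.1}.
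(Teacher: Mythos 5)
Parts (a) and (b) of your argument are correct and coincide with the paper's: the paper disposes of these two cases exactly as you do, by feeding \cref{T4.1} (resp.\ \cref{C4.1}) into \cref{L5.1}, and your bookkeeping of the offsets (using $\nu(V_\star)=\beta$ from \cref{PT3.1C}) to identify the limit as $V_\star$ rather than $V_\star$ plus a constant is the right way to read \cref{L5.1}.

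Part (c), however, is where the entire content of the theorem beyond \cref{S4} lies, and your proposal does not prove it: both of your routes stop precisely at the step you yourself flag as ``the genuine obstacle.'' The first route asserts a contraction of $\widecheck\cT$ in a $V_\star$-weighted span seminorm; but \hyperlink{A0}{(A0)} only minorizes on the bounded set $\cB$, so no one-step weighted span contraction is available, and a multi-step version would require exactly the uniform control of return times to $\cB$ under the time-inhomogeneous products $\widehat P_{n-1}\dotsm\widehat P_0$ that you leave unproved. (It would also deliver a geometric rate in a weighted norm for the whole sequence, which is stronger than what the paper claims or proves.) The second route leaves open the assertion that the lower bracket $\widehat P_{n-1}\dotsm\widehat P_0(\Phi_0-V_\star)$ settles to the correct constant, and the upper bracket $P_\star^{\,n}(\Phi_0-V_\star)\to\uppi_\star(\Phi_0-V_\star)$ would in any case give the wrong constant (the limit is $\lim_n\uppi_\star(\Phi_n-V_\star)$, which is in general strictly smaller). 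What the paper actually does is \cref{T6.1,T6.3}: first, iterating the drift \cref{PT6.1A} through \cref{EL4.1A,EL4.1B} gives the two-sided pointwise bound $\abs{\Phi_n-V_\star}\le\widehat C_0(1+(1-\theta_1)^nV_\star)$, and since \hyperlink{H2}{(H2)} implies \hyperlink{H1}{(H1)}, the backward-submartingale mechanism of \cref{T4.1} yields $\uppi_\star$-a.e.\ convergence to a constant. The upgrade to pointwise convergence everywhere is then carried out on the split chain: Dynkin's formula is applied up to the pseudo-atom return time $\uuptau$, the already-known convergence of $\Breve\Phi^{(1)}_n$ on the atom controls the main term, and the remainder terms are killed by the moment bound \cref{PT6.3L} together with the key estimate \cref{PT6.3G}, namely $\Breve\Prob^{\Hat v^{2n}}_{(x,0)}[\uuptau>n]\to0$, which is itself extracted from a lower bound on $\Breve\Phi^{(0)}_n$ in terms of $\Breve\Exp[\uuptau\wedge n]$ using the near-monotonicity in \hyperlink{A0}{(A0)} and \cref{L2.1}. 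This atom-based localization is the missing idea in your sketch; without it (or a genuinely completed contraction argument), case (c) and the ``pointwise for all $x$'' assertion remain unproven.
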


The assertions concerning \hyperlink{H2}{(H2)} are proved in the next section.
They are included in \cref{T5.1} in order to give a unified statement.

\section{Stability of the rolling horizon procedure}\label{S6}

Consider the following hypothesis.
\medskip

\begin{itemize}
\item[\hypertarget{H2}{\textbf{(H2)}}]
There exist constants $\theta_1>0$ and $\theta_2$ such that
\begin{equation*}
\min_{u\in\cU(x)}\,\rc(x,u) \,\ge\, \theta_1 V_\star(x) - \theta_2 \quad \forall x\in\XX\,.
\end{equation*}
Without loss of generality, we assume that $\theta_1\in(0,1)$.
\end{itemize}

\medskip

\begin{remark}\label{R6.1}
Hypothesis \hyperlink{H2}{(H2)} can be written in the following equivalent, but seemingly
more general form.
\medskip

\begin{itemize}
\item[\hypertarget{H2'}{\textbf{(H2$^\prime$)}}]
There exists $v\in\Usm$,
and a function $V_v\colon\XX\to[1,\infty)$
satisfying
\begin{equation*}
\min_{u\in\cU(x)}\,\rc(x,u) \,\ge\, \theta_1 V_v(x) - \theta_2 \quad \forall x\in\XX\,,
\end{equation*}
for some constants $\theta_1>0$ and $\theta_2$, and
\begin{equation*}
P_v V_v(x) - V_v(x)\,\le\, C\Ind_\cB(x) - \rc_v(x)
\end{equation*}
for some constant $C$.
\end{itemize}
\bigskip

It is clear that \hyperlink{H2}{(H2)} implies \hyperlink{H2'}{(H2$^\prime$)} (take $v =$ an optimal stationary policy),
while the converse follows
by the stochastic representation of $V_v$ and \cref{D3.2}, whereby $V_v(\cdot) \ge V_\star(\cdot)$.
From the definition of $V_\star$, it is clear by `one step analysis' that $V_\star(x) \geq \upsilon_1\min_uc(x,u) - \upsilon_2$ for suitable  constants $\upsilon_1, \upsilon_2 > 0$. Hence the above hypothesis says that $V_\star$ and $c_{min}(\cdot) := \min_uc(\cdot,u)$ have comparable growth. From the definition of $V\star$, it also follows that 
\begin{eqnarray*}
\int V_\star(y)P(dy|x,v_\star(x)) - V_\star(x) &\leq& - c_{min} (x) + C\\
&\leq& -\theta_1V_*(x) + (\theta_2 + C)
\end{eqnarray*}
for some $C > 0$. Then by Theorem 15.0.1, pp.\ 362-3, of \cite{Meyn-09}, a necessary condition for the above is that the process be geometrically ergodic under $v^*$.   
\end{remark}

Concerning the value iteration, we have the following:

\begin{theorem}\label{T6.1}
Assume \textup{\hyperlink{H2}{(H2)}}, and suppose that the
initial condition $\Phi_0$ lies in $\fL(\XX)\cap\order(V_\star)$.
Then, there exists a constant $\widehat{C}_0$ depending on $\Phi_0$ such that
\begin{equation*}
 \babs{\Phi_n(x)-V_\star(x)}\,\le\, \widehat{C}_0\bigl(1 +
\bigl(1-\theta_1)^n V_\star(x)\bigr)\qquad\forall\,x\in\XX\,,\ \forall\,n\in\NN\,.
\end{equation*}
In addition,
$\Phi_n(x)-V_\star(x)$ converges to a constant $\uppi_\star$-a.e.\ as $n\to\infty$.
\end{theorem}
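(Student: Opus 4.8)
The plan is to prove the two–sided estimate by bounding $\Phi_n-V_\star$ from above and from below separately, in each case via a geometric drift supplied by \hyperlink{H2}{(H2)}, and then to deduce the $\uppi_\star$-a.e.\ convergence along the lines of the proof of \cref{T4.1}. (Since $V_\star$ is bounded below the sign conventions below are harmless; for definiteness one may take $V_\star\ge1$.) For the upper bound, iterating \cref{EL4.1A} and using that $P_\star$ is a positive operator gives $\Phi_n-V_\star\le P_\star^{\,n}(\Phi_0-V_\star)$, and since $\Phi_0\in\order(V_\star)$ we have $\Phi_0-V_\star\le\kappa_1(1+V_\star)$ with $\kappa_1\df\norm{\Phi_0-V_\star}_{V_\star}$, so it suffices to bound $P_\star^{\,n}V_\star$. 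This is where \hyperlink{H2}{(H2)} enters: from the Poisson equation \cref{ELC02} we have $P_\star V_\star=V_\star-\rc_\star+\beta$, and $\rc_\star(x)\ge\min_{u\in\cU(x)}\rc(x,u)\ge\theta_1V_\star(x)-\theta_2$ yields the drift $P_\star V_\star\le(1-\theta_1)V_\star+(\theta_2+\beta)$; iterating the drift gives $P_\star^{\,n}V_\star\le(1-\theta_1)^nV_\star+c_1$ for a constant $c_1$, whence $\Phi_n-V_\star\le\widehat C_0\bigl(1+(1-\theta_1)^nV_\star\bigr)$.

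For the lower bound I would avoid iterating \cref{EL4.1B}, which produces $\widehat P_{n-1}\dotsm\widehat P_0V_\star$: because the value–iteration selectors $\widehat v_n$ need not minimise the Bellman operator, there is no evident drift for the kernels $\widehat P_n$. Instead I use the monotonicity of $\overline\cT$ together with $\overline\cT V_\star=V_\star$ (the Bellman equation \cref{ET3.1A}). The key elementary fact is that for $a\in[0,1]$ and $b\in\RR$,
\[
\overline\cT(aV_\star+b)\;\ge\;\bigl(a+(1-a)\theta_1\bigr)V_\star-(1-a)(\theta_2+\beta)+b\,,
\]
which follows by writing $\overline\rc(x,u)+aP_uV_\star(x)=a\bigl(\overline\rc(x,u)+P_uV_\star(x)\bigr)+(1-a)\overline\rc(x,u)$, using $\min(f+g)\ge\min f+\min g$, the Bellman equation, and the lower bound on $\rc$ in \hyperlink{H2}{(H2)}. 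Since $\Phi_0\in\fL(\XX)$ is bounded below, $\Phi_0\ge-L_0$ for a constant $L_0$, so monotonicity gives $\Phi_n\ge\overline\cT^{\,n}(-L_0)$; one application of the fact (with $a=0$) gives $\Phi_1\ge\theta_1V_\star+b_1$ with $b_1$ constant, and iterating it (the slopes $a_n=1-(1-\theta_1)^n$ stay in $[0,1]$) gives $\Phi_n\ge\bigl(1-(1-\theta_1)^n\bigr)V_\star+b_n$ for $n\ge1$, where the corrections $b_n$ form a convergent series and so remain bounded. This yields the matching lower bound $V_\star-\Phi_n\le(1-\theta_1)^nV_\star+\text{const}$, and the two bounds together give the asserted estimate.

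For the convergence statement, note first that \hyperlink{H2}{(H2)} implies \hyperlink{H1}{(H1)}: since $v_\star$ is optimal with invariant measure $\uppi_\star$ we have $\uppi_\star(\rc_\star)=\beta<\infty$, so \hyperlink{H2}{(H2)} forces $\uppi_\star(V_\star)\le(\beta+\theta_2)/\theta_1<\infty$. Moreover the estimate just proved gives $\norm{\Phi_n-V_\star}_{V_\star}\le\widehat C_0$ for all $n$, hence $\sup_n\uppi_\star\abs{\Phi_n-V_\star}<\infty$. One then follows the argument in the proof of \cref{T4.1}: along the stationary optimal process $\{X^\star_n\}$ controlled by $v_\star$, \cref{EL4.1A} makes $\bigl\{\Phi_{-k}(X^\star_{-k})-V_\star(X^\star_{-k})\bigr\}_{k\le0}$ a backward submartingale, bounded in $\cL^1$, which therefore converges a.s.\ and in the mean; by ergodicity of $\{X^\star_n\}$ the limit is $\uppi_\star$-a.s.\ constant, which is the claimed convergence of $\Phi_n-V_\star$. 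In contrast with \cref{T4.1,C4.1}, here we need neither $\Phi_0\in\cV(\kappa)$ nor boundedness of $V_\star$, the uniform $\order(V_\star)$-bound playing the role of the invariance of $\cV(\kappa)$ under $\overline\cT$.

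The step I expect to require the most care is the lower bound, i.e.\ showing that the suboptimality of the running selectors $\widehat v_n$ does not spoil the geometric rate. The point is to argue with the operator $\overline\cT$ rather than with the selector kernels $\widehat P_n$: $\overline\cT$ sends a function affine in $V_\star$ with slope in $[0,1]$ to one of the same form with slope pushed toward $1$ at rate $1-\theta_1$, and this is exactly what the cost lower bound \hyperlink{H2}{(H2)} provides.
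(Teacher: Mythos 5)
Your proof is correct, and the upper bound and the final convergence step coincide with the paper's: the paper likewise derives the drift $P_\star V_\star\le \beta+\theta_2+(1-\theta_1)V_\star$ from the Poisson equation and \hyperlink{H2}{(H2)}, obtains the upper estimate by iterating \cref{EL4.1A}, and then invokes the backward--submartingale argument of \cref{T4.1} once $\uppi_\star(\Phi_n-V_\star)$ is known to be bounded above and below. Where you genuinely diverge is the lower bound. The paper does not iterate \cref{EL4.1B} either; instead it sets $\rho=1-\theta_1$, introduces $f_n=\Phi_n-(1-\rho^n)\bigl(V_\star-\tfrac{\beta+\theta_2}{\theta_1}\bigr)$, and verifies pointwise that $f_{n+1}\ge\widehat P_n f_n$ by combining $\Phi_{n+1}=\widehat\rc_n+\widehat P_n\Phi_n$ with the one-sided inequality $V_\star\le\widehat\rc_n+\widehat P_nV_\star$ of \cref{ELC04} and the cost bound in \hyperlink{H2}{(H2)}; iterating and using that $\widehat P_n$ is Markov gives $f_n\ge\inf_\XX\Phi_0$, i.e.\ exactly your $\Phi_n\ge(1-\rho^n)V_\star+b_n$. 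Your route replaces this selector-wise supermartingale computation by the monotonicity of $\overline\cT$ acting on functions affine in $V_\star$, which is a clean repackaging of the same two inputs (the Bellman fixed-point identity $\overline\cT V_\star=V_\star$ and the lower bound on $\rc$); note that the "missing drift for $\widehat P_n$" you worry about is also not needed in the paper's version, since only the lower bound \cref{ELC04} on $\widehat P_nV_\star$ enters there. A small bonus of your write-up is the explicit remark that \hyperlink{H2}{(H2)} forces $\uppi_\star(V_\star)\le(\beta+\theta_2)/\theta_1<\infty$, which the paper leaves implicit but which is indeed what makes $M_k$ integrable in the backward-submartingale step.
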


\begin{proof}
Under \hyperlink{H2}{(H2)} we obtain

\begin{equation}\label{PT6.1A}
P_\star V_\star(x) \,=\, \beta - \rc_\star(x) + V_\star(x)
\,\le\, \beta + \theta_2 + (1-\theta_1) V_\star(x)\,.
\end{equation}

Let $\rho\df 1-\theta_1$, and define
\begin{equation*}
f_n(x) \,\df\, \Phi_n (x) - (1-\rho^n) \bigl(V_\star(x)
-\tfrac{\beta+\theta_2}{\theta_1}\bigr)\,.
\end{equation*}
Recall \cref{EN4.1A,ELC02}. We have
\begin{equation*}
\begin{aligned}
 f_{n+1}(x)-\widehat{P}_{n} f_{n}(x)
&\,=\, \widehat\rc_n(x)-\theta_1\rho^{n}
\bigl(V_\star(x)-\tfrac{\beta+\theta_2}{\theta_1}\bigr)
+(1-\rho^{n})(\widehat{P}_{n}-I) V_\star(x)\\[5pt]
&\,\ge\, \widehat\rc_n(x)-\theta_1\rho^{n}
\bigl(V_\star(x)-\tfrac{\beta+\theta_2}{\theta_1}\bigr)
-(1-\rho^{n})\widehat\rc_n(x)\\[5pt]
&\,=\, \rho^{n}\bigl(-\theta_1 V_\star(x)+\theta_2+\rc_n(x)\bigr)
\,\ge\, 0 \qquad \forall (x,n) \in \XX\times\NN\,,
\end{aligned}
\end{equation*}
where we also used \cref{ELC04}.
Iterating the above inequality, we get $f_n\ge \inf_\XX \Phi_0$ for all $n\in\NN$.
Assuming without loss of generality that $\Phi_0$ is nonnegative, this implies that
\begin{equation}\label{PT6.1B}
(1-\rho^n)\bigl(V_\star(x)-\tfrac{\beta+\theta_2}{\theta_1}\bigr)\,\le\,
\Phi_{n}(x) \,.
\end{equation}
On the other hand, by \cref{EL4.1A,PT6.1A}, we obtain
\begin{equation}\label{PT6.1C}
\Phi_n(x) \,\le\, V_\star(x) + \Bar{C}_0
  \bigl(\Bar{C}_1+\rho^n V_\star(x)\bigr)
\end{equation}
for some constants $\Bar{C}_0$ and $\Bar{C}_1$ which depend on $\Phi_0$.
Since $\uppi_\star(\Phi_n-V_\star)$ is bounded from above by \cref{PT6.1C},
and bounded from below by \cref{PT6.1B}, the
 result follows by the same argument as was used in
the proof of \cref{T4.1}.
\end{proof}

\begin{definition}
We say that $v\in\Usm$ is \emph{stabilizing} if
\begin{equation*}
\limsup_{N\to\infty}\;
\frac{1}{N}\;
\Exp_x^{v}\Biggl[\sum_{k=0}^{N-1}\rc_v(X_k)\Biggr]
\,<\,\infty\qquad\forall\,x\in\XX\,,
\end{equation*}
and denote the class of stabilizing controls by $\Ustab$.
\end{definition}

Recall the definition of $\Hat{v}_n$ in \cref{N4.1}.
We have the following theorem.

\begin{theorem}\label{T6.2}
Under \textup{\hyperlink{H2}{(H2)}},
for every $\Phi_0\in\fL(\XX)\cap\order(V_\star)$
there exists $N_0\in\NN$ such that the stationary
Markov control $\Hat{v}_n$ is stabilizing for any $n\ge N_0$.
\end{theorem}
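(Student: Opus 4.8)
The plan is to read off from \cref{T6.1} a Foster--Lyapunov drift inequality for the chain controlled by $\Hat{v}_n$, using the value iteration iterate $\Phi_n$ itself as the Lyapunov function, and then to conclude stability by a standard comparison estimate. Recall that $\Hat{v}_n$ is a measurable selector from the minimizer in \cref{E-VI}, so \cref{ELC01,EN4.1A} yield the identity $\widehat P_n\Phi_n(x)=\Phi_{n+1}(x)-\rc_{\Hat{v}_n}(x)+\beta$ for all $x\in\XX$. First I would invoke \cref{T6.1}, after the harmless normalization $\Phi_0\ge 0$ (adding a constant to $\Phi_0$ shifts every $\Phi_n$ by that constant and leaves the minimizers $\Hat{v}_n$ unchanged): with $\rho\df 1-\theta_1\in(0,1)$ there is a constant $\widehat C_0$, depending on $\Phi_0$ but not on $x$ or $n$, with $\babs{\Phi_n-V_\star}\le\widehat C_0\bigl(1+\rho^n V_\star\bigr)$ on $\XX$. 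Combining the upper bound for $\Phi_{n+1}$ with the lower bound for $\Phi_n$ and using $\rho^{n+1}\le\rho^n$ gives
\begin{equation*}
\Phi_{n+1}(x)-\Phi_n(x)\,\le\,2\widehat C_0+2\widehat C_0\rho^n V_\star(x)\,,\qquad x\in\XX\,,\ n\in\NN\,.
\end{equation*}

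Inserting this into the identity above, $\widehat P_n\Phi_n(x)-\Phi_n(x)\le 2\widehat C_0+\beta+2\widehat C_0\rho^n V_\star(x)-\rc_{\Hat{v}_n}(x)$. The second ingredient is \hyperlink{H2}{\textup{(H2)}}: since $\Hat{v}_n(x)\in\cU(x)$ we have $\rc_{\Hat{v}_n}(x)\ge\min_{u\in\cU(x)}\rc(x,u)\ge\theta_1 V_\star(x)-\theta_2$, hence $V_\star(x)\le\theta_1^{-1}\bigl(\rc_{\Hat{v}_n}(x)+\theta_2\bigr)$, where we may take $\theta_2\ge0$ without loss. Substituting, the coefficient of $\rc_{\Hat{v}_n}(x)$ becomes $2\widehat C_0\theta_1^{-1}\rho^n-1$, so if I choose $N_0\in\NN$ with $2\widehat C_0\theta_1^{-1}\rho^{N_0}\le\tfrac12$ (a threshold depending only on $\Phi_0$), then for every $n\ge N_0$ and every $x\in\XX$,
\begin{equation*}
\widehat P_n\Phi_n(x)\,\le\,\Phi_n(x)-\tfrac12\rc_{\Hat{v}_n}(x)+C'\,,\qquad C'\df 2\widehat C_0+\beta+2\widehat C_0\theta_1^{-1}\theta_2\,,
\end{equation*}
where we used $\rho^n\le1$.

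To finish, fix $n\ge N_0$ and set $W\df\Phi_n-\inf_\XX\Phi_n$, which is nonnegative and real-valued since $\Phi_n\in\fL(\XX)$ is bounded below; moreover $\widehat P_nW<\infty$ pointwise by the displayed identity. The drift inequality becomes $\widehat P_nW\le W-\tfrac12\rc_{\Hat{v}_n}+C'$. For the chain $\{X_k\}$ controlled by the stationary control $\Hat{v}_n$, put $\cF_N\df\sigma(X_0,\dotsc,X_N)$ and $Y_N\df W(X_N)+\tfrac12\sum_{k=0}^{N-1}\rc_{\Hat{v}_n}(X_k)-C'N$; then the Markov property gives $\Exp^{\Hat{v}_n}_x[Y_{N+1}\mid\cF_N]\le Y_N$ a.s., and $Y_N^-\le C'N$, so an induction on $N$ via the tower property gives $\Exp^{\Hat{v}_n}_x[Y_N]\le Y_0=W(x)$. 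Since $W\ge0$, this yields $\tfrac12\Exp^{\Hat{v}_n}_x\bigl[\sum_{k=0}^{N-1}\rc_{\Hat{v}_n}(X_k)\bigr]\le W(x)+C'N$, whence $\limsup_{N\to\infty}\tfrac1N\Exp^{\Hat{v}_n}_x\bigl[\sum_{k=0}^{N-1}\rc_{\Hat{v}_n}(X_k)\bigr]\le 2C'<\infty$ for all $x\in\XX$, i.e.\ $\Hat{v}_n\in\Ustab$. I expect the main point to be the interplay in the second paragraph: one must exploit simultaneously the \emph{geometric} decay rate of \cref{T6.1} and the \emph{growth} bound \hyperlink{H2}{\textup{(H2)}} to drive the coefficient of $\rc_{\Hat{v}_n}$ strictly below $1$ once $n$ is large — neither ingredient alone suffices. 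The passage from the pointwise drift to the averaged bound is then routine, the only subtlety being that $\Phi_n$ need not be $\Prob^{\Hat{v}_n}_x$-integrable, which the bounded-negative-part remark circumvents.
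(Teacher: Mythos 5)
Your proposal is correct and follows essentially the same route as the paper: both arguments combine the value-iteration identity \cref{ELC01}, the two-sided estimate from \cref{T6.1} (i.e.\ \cref{PT6.1B,PT6.1C}), and \hyperlink{H2}{(H2)} to produce a Foster--Lyapunov drift inequality whose coefficient on $\rc_{\Hat{v}_n}$ becomes strictly negative once $\rho^n$ is small, and then telescope. The only cosmetic difference is that you use $\Phi_n$ itself as the Lyapunov function where the paper works with $V_\star$.
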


\begin{proof}
Combining \cref{ELC01,PT6.1B,PT6.1C}, we obtain
\begin{equation*}
\begin{aligned}
\widehat{P}_n\,\Bigl((1-\rho^n)\bigl(V_\star(x)-\tfrac{\beta+\theta_2}{\theta_1}\bigr)\Bigr)
&\,\le\, \widehat{P}_n\,\Phi_{n}(x)\\
&\,=\, -\widehat\rc_n(x)+\Phi_{n+1}(x)\\
&\,\le\, -\widehat\rc_n(x)
+ V_\star(x) + \Bar{C}_0\bigl(\Bar{C}_1+\rho^{n+1} V_\star(x)\bigr).\,,
\end{aligned}
\end{equation*}
Rearranging, this gives
\begin{equation}\label{PT6.2A}
(1-\rho^n)\widehat{P}_n\,V_\star(x)\,\le\,
-\widehat\rc_n(x) + (1-\rho^n)\tfrac{\beta+\theta_2}{\theta_1}
+\Bar{C}_0\Bar{C}_1 +(1+\Bar{C}_0\rho)\rho^nV_\star(x)
+(1-\rho^n)V_\star(x)\,.
\end{equation}
From \hyperlink{H2}{(H2)} we have
$V_\star \le \frac{\widehat\rc_n + \beta +\theta_2}{\theta_1}$, and using
this in \cref{PT6.2A} gives
\begin{equation}\label{PT6.2B}
(1-\rho^n)\widehat{P}_n\,V_\star(x)\,\le\,
-\Bigl(1-\tfrac{1+\Bar{C}_0\rho}{\theta_1}\rho^n\Bigl) \widehat\rc_n(x)
+ \bigl(1+\Bar{C}_0\rho^{n+1}\bigr)\tfrac{\beta+\theta_2}{\theta_1}
+\Bar{C}_0\Bar{C}_1
+(1-\rho^n)V_\star(x)\,.
\end{equation}
Let $N_0\in\NN$ be large enough such that
$\tfrac{1+\Bar{C}_0\rho}{\theta_1}\rho^{N_0}<1$  and let $n\geq N_0$.
Since $0 < \rho < 1$, adding and subtracting $(1-\rho^{n-1})V_\star(x)$ on the right hand side of \cref{PT6.2B} and using it to form a telescoping sum, the fact that $V_\star$ is bounded from below in $\XX$ coupled with Fatou's lemma leads to
\begin{equation}\label{PT6.2C}
\limsup_{N\to\infty}\; \frac{1}{N}\;
\Exp_x^{\Hat{v}_n}\Biggl[\sum_{k=0}^{N-1} \rc_n(X_k)\Biggr]
\,\le\, \beta+\frac{\bigl(1+\Bar{C}_0\rho^{n+1}\bigr)(\beta+\theta_2) + \theta_1\Bar{C}_0\Bar{C}_1}
{\theta_1-\bigl(1+\Bar{C}_0\rho\bigr)\rho^n}
\qquad\forall\, n\ge N_0\,,
\end{equation}
with $\rc_n(x) \df \rc(x,\Hat{v}_n(x))$.
This shows that $\Hat{v}_n$ is stabilizing for all $n\ge N_0$.
\end{proof}

We improve the convergence result in \cref{T6.1}.

\begin{theorem}\label{T6.3}
Assume \textup{\hyperlink{H2}{(H2)}}.
Then,
for every initial condition $\Phi_0\in\fL(\XX)\cap\order(V_\star)$, the sequence
$\Phi_n(x)-V_\star(x)$ converges pointwise to a constant.
\end{theorem}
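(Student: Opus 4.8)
Write $g_n\df\Phi_n-V_\star$ and let $\ell$ be the $\uppi_\star$-a.e.\ limit of $g_n$ furnished by \cref{T6.1}; recall also the two-sided bound $\abs{g_n(x)}\le\widehat C_0\bigl(1+(1-\theta_1)^nV_\star(x)\bigr)$, so that $g_n\in\order(V_\star)$, uniformly in $n$. The plan is to upgrade the $\uppi_\star$-a.e.\ convergence to convergence at every point. First I would record some consequences of \hyperlink{H2}{(H2)} implicit in \cref{T6.1}: integrating \hyperlink{H2}{(H2)} against $\uppi_\star$ and using that the optimal ergodic cost under $v_\star$ is $\beta$ gives $\uppi_\star(V_\star)\le(\beta+\theta_2)/\theta_1<\infty$, hence by dominated convergence $g_n\to\ell$ also in $\cL^1(\XX;\uppi_\star)$; by \cref{L4.1}, $\uppi_\star(g_n)\downarrow\ell$; and since by \hyperlink{A0}{(A0)} and invariance of $\uppi_\star$ one has $\uppi_\star(A)\ge\delta\,\uppi_\star(\cB)\,\nu(A)$ (with $\uppi_\star(\cB)>0$ by near-monotonicity), the Radon--Nikodym derivative $\mathrm{d}\nu/\mathrm{d}\uppi_\star$ is bounded, so $\nu(g_n)\to\ell$ as well. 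Finally, \cref{PT6.1A} gives the geometric drift $P_\star V_\star\le(1-\theta_1)V_\star+(\beta+\theta_2)$, which together with $\cB$ being a $1$-small set (by \hyperlink{A0}{(A0)}) makes the chain controlled by $v_\star$ geometrically $V_\star$-ergodic \cite[Ch.~14--15]{Meyn-09}; in particular $P_\star^k h(x)\to\uppi_\star(h)$ as $k\to\infty$, for every $x\in\XX$ and every $h\in\order(V_\star)$.

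The upper bound is then immediate. Iterating \cref{EL4.1A} gives $g_{n+k}\le P_\star^k g_n$ for all $k\ge1$ (using monotonicity of $P_\star$), so letting $k\to\infty$ yields $\limsup_{m\to\infty}g_m(x)\le\uppi_\star(g_n)$ for every $n$ and every $x\in\XX$, and then letting $n\to\infty$ gives $\limsup_{m\to\infty}g_m(x)\le\ell$ for all $x\in\XX$. A by-product worth isolating is that $(g_m-\ell)^+\to0$ pointwise on $\XX$; by dominated convergence (the bound above and $\uppi_\star(V_\star)<\infty$) this gives $\uppi_\star\bigl((g_m-\ell)^+\bigr)\to0$, and therefore also $\uppi_\star\bigl((g_m-\ell)^-\bigr)\to0$ since $\uppi_\star(g_m)\to\ell$; in turn $\nu\bigl((g_m-\ell)^-\bigr)\to0$ by the bounded density noted above.

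It remains to prove $\liminf_{m\to\infty}g_m(x)\ge\ell$ for every $x$. Set $\psi_m\df(g_m-\ell)^-\ge0$; from \cref{PT6.1B} one gets a \emph{constant} $D_1$ with $\psi_m\le D_1+(1-\theta_1)^mV_\star$, and rewriting \cref{EL4.1B} (i.e.\ $g_{m+1}-\ell\ge\widehat P_m(g_m-\ell)$, with $\widehat P_m$ the receding-horizon kernel of \cref{N4.1}) as $\psi_{m+1}\le\widehat P_m\psi_m$ and iterating, $\psi_{m+k}(x)\le\Exp^{(m)}_x[\psi_m(X_k)]$, where $\{X_j\}$ denotes the time-inhomogeneous chain with $X_0=x$ and step-$j$ kernel $\widehat P_{m+j-1}$. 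Since $\limsup_k\psi_{m+k}(x)=\ell-\liminf_m g_m(x)$, it suffices to show $\limsup_{k\to\infty}\Exp^{(m)}_x[\psi_m(X_k)]\to0$ as $m\to\infty$. To this end I would extract from the proof of \cref{T6.2} — combining \cref{PT6.2B} with $\widehat\rc_n\ge\theta_1V_\star-\theta_2-\beta$ from \hyperlink{H2}{(H2)} — a \emph{uniform} geometric drift $\widehat P_nV_\star\le\widetilde\rho V_\star+C_5$, $\widetilde\rho<1$, valid for all $n\ge N_1$, which bounds uniformly (through $V_\star(x)$) both the expected hitting time of $\cB$ and the expected $V_\star$-occupation before reaching $\cB$; and I would use that the minorization in \hyperlink{A0}{(A0)} is uniform over controls, so that (by the usual splitting) $\cB$ is a regeneration set whose exit law has a $\delta$-component equal to $\nu$, the successive $\nu$-regeneration epochs being i.i.d.\ regeneration points for the inhomogeneous chain. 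Decomposing $\Exp^{(m)}_x[\psi_m(X_k)]$ at the last regeneration before $k$, the contribution of trajectories that have not regenerated by time $k$ vanishes as $k\to\infty$ (drift tail), while the post-regeneration contribution is dominated by $\sup_{r\ge1,\,l\ge0}\Exp^{(m+r)}_\nu[\psi_m(X_l)]$, which tends to $0$ as $m\to\infty$: the $(1-\theta_1)^mV_\star$ part of $\psi_m$ is killed by the uniform bound on $\Exp^{(m+r)}_\nu[V_\star(X_l)]$, while the remainder is controlled by $\nu(\psi_m)\to0$ together with the $\uppi_\star$-a.e.\ vanishing of $\psi_m$, propagated along the uniformly drift-controlled excursions (recall $\nu\ll\uppi_\star$). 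This last step — transferring the $\uppi_\star$-decay of $\psi_m$ to the inhomogeneous receding-horizon chain started from an arbitrary deterministic point, given that the kernels $\widehat P_n$ need not fix $\uppi_\star$ even approximately — is the main obstacle, and it is exactly here that one must lean on the regeneration at $\cB$ and the uniform Lyapunov drift rather than on any absolute continuity of the one-step kernels. Combining the two bounds gives $g_m(x)\to\ell$ for every $x\in\XX$.
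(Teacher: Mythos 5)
Your upper bound takes a genuinely different (and attractive) route from the paper's: instead of stopping at the pseudo-atom, you iterate \cref{EL4.1A} and invoke $V_\star$-norm ergodicity of $P_\star$ to get $\limsup_m g_m(x)\le\uppi_\star(g_n)\downarrow\ell$. The observation that \hyperlink{H2}{(H2)} forces $\uppi_\star(V_\star)\le(\beta+\theta_2)/\theta_1<\infty$ is correct and useful. But your justification of $V_\star$-ergodicity \emph{from every} $x$ is incomplete: the drift \cref{PT6.1A} is $P_\star V_\star\le(1-\theta_1)V_\star+(\beta+\theta_2)$, which is a geometric drift off the sublevel set $\{V_\star\le M\}$, not off $\cB$; citing ``$\cB$ is $1$-small'' does not make that sublevel set petite, and without petiteness of the level sets the $f$-norm ergodic theorem only gives convergence on a full absorbing set — which would merely reproduce $\uppi_\star$-a.e.\ convergence. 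This is fixable (on $\{V_\star\le M\}$ the representation of $\Breve V^{(0)}_\star$ together with $\Breve\rc-\beta\ge\Tilde\varepsilon_0$ off $\cB\times\{0,1\}$ and \cref{L2.1} bounds $\Breve\Exp^{v_\star}_{(x,0)}[\uuptau]$ uniformly, whence the set is petite for the split chain), but the argument has to be supplied. The paper sidesteps all of this by applying Dynkin's formula up to $\uuptau\wedge n\wedge\uptau_m$ and using that $\Breve\Psi^{(1)}_k$ is a \emph{constant} $\epsilon_k\to0$ on the pseudo-atom.

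The lower bound has a genuine gap, and it sits exactly where you flag ``the main obstacle.'' After iterating $\psi_{m+k}\le\widehat P_{m+k-1}\cdots\widehat P_m\psi_m$ you must show $\Exp[\psi_m(X_k)]$ is small, where $X_k$ is the inhomogeneous receding-horizon chain. Your regeneration decomposition reduces this to $\Exp_\nu[\psi_m(X_l)]$ for $l\ge1$, and the only pointwise control you have is $\psi_m\le D_1+(1-\theta_1)^mV_\star$ from \cref{PT6.1B}: the uniform drift kills the $(1-\theta_1)^mV_\star$ term but leaves the constant $D_1$ intact, and neither $\nu(\psi_m)\to0$ nor $\uppi_\star$-a.e.\ decay of $\psi_m$ transfers to the law of $X_l$, since the kernels $\widehat P_n$ do not preserve (or even approximately fix) $\uppi_\star$ and no absolute continuity of $\mathrm{Law}(X_l)$ with respect to $\uppi_\star$ is available. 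The missing idea is the paper's: do \emph{not} iterate all the way down to $\psi_m$ and evaluate it at time $k$; instead stop the backward iteration at the \emph{first} visit to the pseudo-atom $\cB\times\{1\}$ and evaluate there the \emph{later-indexed} iterate $\Breve\Psi^{(1)}_{2n-\uuptau}$, which equals the deterministic constant $\epsilon_{2n-\uuptau}$ with $\sup_{k\ge n}\epsilon_k\to0$ — no propagation of a.e.\ smallness is needed. What remains is then exactly \cref{PT6.3G} (the inhomogeneous chain hits the pseudo-atom before time $n$ with probability tending to one, proved via \hyperlink{A0}{(A0)} and \cref{L2.1}) and the moment bound \cref{PT6.3L,PT6.3M} to dispose of the event $\{\uuptau>n\}$. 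Without this device your scheme does not close.
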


\begin{proof}
Without loss of generality, we may translate the initial condition $\Phi_0$ by
a constant so that $\Phi_n\to V_\star$ as $n\to\infty$ $\uppi_\star$-a.s.
Then of course 
\begin{equation}
\Breve\Phi^{(1)}_n\to\Breve{V}^{(1)}_\star=0 \  \  \mbox{on} \  \  \cB. \label{GoToZero}
\end{equation}
Recall also, that these functions are constant on $\cB$.
Let $\Psi_n\df\Phi_n-V_*$.
Then $\babs{\Breve\Psi^{(1)}_n}=\epsilon_n$ on $\cB$, for some sequence
$\epsilon_n\to0$.
Also
\begin{equation}\label{PT6.3A}
\babs{\Psi_n(x)} \,\le\, \widehat{C}_0\bigl(1 +
\bigl(1-\theta_1)^n V_\star(x)\bigr)\qquad\forall\,x\in\XX
\end{equation}
by \cref{T6.1},
and
\begin{equation}\label{PT6.3B}
P^n_\star V_\star(x) \,\le\, \frac{\beta+\theta_2}{\theta_1}
+ (1-\theta_1)^n V_\star(x)
\end{equation}
by \cref{PT6.1A}.
Let $\uptau_m$ denote the first exit time from the ball of radius $m$
centered at some fixed point $x_0$.
Applying the optional sampling theorem to \cref{EL4.1A}
relative to the stopping time $\uuptau\wedge n\wedge\uptau_m$,
we obtain
\begin{equation}\label{PT6.3C}
\begin{aligned}
\Breve\Psi^{(0)}_{2n}(x) &\,\le\,
\Breve\Exp^{v_\star}_{(x,0)}\Bigl[\Breve\Psi^{(1)}_{2n-\uuptau}(\Breve{X}_\uuptau)
\Ind_{\{\uuptau\le n<\uptau_m\}}
+ \Breve\Psi^{(0)}_{n}(\Breve{X}_n) \Ind_{\{n<\uuptau<\uptau_m\}}
+ \Breve\Psi^{(0)}_{2n - \uptau_m}(\Breve{X}_{\uptau_m}) \Ind_{\{\uptau_m\le\uuptau\wedge n\}}\Bigr]\\
&\,\le\, \sup_{k\ge n}\,\epsilon_k
+ \Breve\Exp^{v_\star}_{(x,0)}
\Bigl[\Breve\Psi^{(0)}_{n}(\Breve{X}_n) \Ind_{\{\uuptau> n\}} \Bigr]
+ \Breve\Exp^{v_\star}_{(x,0)}
\Bigl[\Breve\Psi^{(0)}_{2n - \uptau_m}(\Breve{X}_{\uptau_m}) \Ind_{\{\uuptau_m\le n\}} \Bigr]\,.
\end{aligned}
\end{equation}
By \cref{E-Phispl}, we have
\begin{equation}\label{PT6.3D}
\babs{\Breve\Psi^{(0)}_{n}}\,\le\, \frac{1}{1-\delta}\,\babs{\Psi_{n}}
+ \frac{\delta \epsilon_n}{1-\delta}\,.
\end{equation}
Therefore
\begin{equation*}
\limsup_{n\to\infty}\,\Breve\Exp^{v_\star}_{(x,0)}
\Bigl[\Breve\Psi^{(0)}_{n}(\Breve{X}_n) \Ind_{\{\uuptau> n\}} \Bigr]\,\le\,0
\end{equation*}
 by \cref{PT6.3A,PT6.3B,PT6.3D}
and the fact that $\Breve\Exp^{v_\star}_{(x,0)}[\uuptau]<\infty$.

By a standard application of the optional sampling theorem to the Poisson equation
 $P_\star V_\star + c_\star = V_\star + \beta$,
and keeping in mind that $V_\star$ is bounded from below,
we obtain, for some constant $\kappa_1$,
\begin{equation}\label{PT6.3E}
\Exp^{v_\star}_x\Bigl[V_\star(X_{\uptau_m})\Ind_{\{\uuptau_m\le n\}}\Bigr]
\,\le\, n\beta + \kappa_1 + V_\star(x)
\qquad\text{for all\ } m,n\in\NN\,.
\end{equation}
Therefore, combining \cref{PT6.3A,PT6.3E}, we see that there exists some constant
$\kappa_2$ such that
\begin{equation*}
\Breve\Exp^{v_\star}_{(x,0)}
\Bigl[\Breve\Psi^{(0)}_{2n - \uptau_m}(\Breve{X}_{\uptau_m}) \Ind_{\{\uuptau_m\le n\}} \Bigr]
\,\le\, \kappa_2\Bigl(\Breve\Prob^{v_\star}_{(x,0)}(\uuptau_m\le n)
+ (1-\theta_1)^n\bigl(n\beta + \kappa_1 + V_\star(x)\bigr)\Bigr)
\end{equation*}
for all $m,n\in\NN$.
This shows that the third term on the right-hand side of \cref{PT6.3C} tends to
$0$ as $m\to\infty$ for any fixed $n\in\NN$.

A slight modification of \cref{PT6.3C} also shows that
$\limsup_{n\to\infty}\Breve\Psi^{(0)}_{2n+1}(x)\le0$  for all $x\in\XX$.
Thus we have established that for each initial condition
$\Phi_0\in\fL(\XX)\cap\order(V_\star)$ there exists a constant $\Hat\kappa_0$
such that
\begin{equation}\label{PT6.3F}
\limsup_{n\to\infty}\,\Breve\Phi^{(0)}_n(x)\,\le\,\Hat\kappa_0+\Breve{V}^{(0)}_\star(x)
\quad\forall\,x\in\XX\,,\qquad\text{and\ }
 \Breve\Phi^{(1)}_n\,\xrightarrow[n\to\infty]{}\, 0\,,
\end{equation}
where we use \eqref{GoToZero}.

In the second part of the proof, we establish that
$\liminf_{n\to\infty}\,\Breve\Phi^{(0)}_n$ agrees with the value
of superior limit in \cref{PT6.3F}.
Let $\Hat{v}^n$ denote the nonstationary Markov policy
$(\Hat{v}_n,\Hat{v}_{n-1},\dotsc,\Hat{v}_{1})$ with $\Hat{v}_n$ as
defined in \cref{N4.1}.
We claim that
\begin{equation}\label{PT6.3G}
\Breve\Prob^{\Hat{v}^{2n}}_{(x,0)}[\uuptau>n] \,\xrightarrow[n\to\infty]{}\,0\,.
\end{equation}
  Summing  $(1-\delta) \ \times$ equation \eqref{one} and $\delta \ \times$ equation \eqref{three} for $x \in \cB$ and using \eqref{two} for $x \in\cB^c$,  we recover \eqref{E-VI} in view of  \eqref{E-Phispl}. To prove the claim, we apply Dynkin's formula for the stopping time $\uuptau\wedge\uptau_m\wedge n$ and Fatou's Lemma as $m\uparrow\infty$ to the value
iteration till $\uuptau\wedge\uptau_m\wedge n$ over the split chain, in order to obtain
\begin{equation}\label{PT6.3H}
\Breve\Phi_{n}(x) \,\ge\, 
\Breve\Exp^{\Hat{v}^{n}}_{(x,0)}\Biggl[\sum_{k=0}^{\uuptau\wedge n - 1}
\bigl(\Breve\rc_{\Hat{v}_{n-k}}(\Breve{X}_k)-\beta\bigr)\Biggr]
+ \Breve\Exp^{\Hat{v}^{n}}_{(x,0)}\Bigl[\Breve\Phi_{\uuptau\wedge n}\Bigr],
\end{equation}
leading to
\begin{equation}\label{PT6.3H}
\Breve\Phi_{n}^{(0)}(x) \,\ge\, 
\Breve\Exp^{\Hat{v}^{n}}_{(x,0)}\Biggl[\sum_{k=0}^{\uuptau\wedge n - 1}
\bigl(\Breve\rc_{\Hat{v}_{n-k}}(\Breve{X}_k)-\beta\bigr)\Biggr]
+ \Breve\Exp^{\Hat{v}^{n}}_{(x,0)}\Bigl[\Breve\Phi^{(1)}_{n-\uuptau}
\Ind_{\{\uuptau\le n\}}+ \Breve\Phi^{(0)}_{0}(\Breve{X}_0) \Ind_{\{\uuptau> n\}} \Bigr]
\end{equation}
for $x\in\XX$.
By  \hyperlink{A0}{\textup{(A0)}}, there exists some positive constant
$\Tilde\varepsilon_0$ such that
$\Breve\rc\bigl((x,0),u\bigr)\ge \beta +\Tilde\varepsilon_0$ 
for $(x,u)\in(\cB^c\times\Act)\cap\KK$.
Therefore,
\begin{equation*}
\sum_{k=0}^{\uuptau\wedge n - 1}
\bigl(\Breve\rc_{\Hat{v}_{n-k}}(\Breve{X}_k)-\beta\bigr)
\,\ge\, \Tilde\varepsilon_0(\uuptau\wedge n)
- (\beta+\Tilde\varepsilon_0)\sum_{k=0}^{\uuptau\wedge n - 1}
\Ind_{\cB\times\{0\}}(\Breve{X}_k).
\end{equation*}
Taking expectations, using \cref{L2.2}, we see that
\cref{PT6.3H} reduces to
\begin{equation}\label{PT6.3I}
\Breve\Phi_{n}^{(0)}(x) \,\ge\,
\Tilde\varepsilon_0\,\Breve\Exp^{\Hat{v}^{n}}_{(x,0)}[\uuptau\wedge n]
- (\beta+\Tilde\varepsilon_0)\delta_\circ
+ \Breve\Exp^{\Hat{v}^{n}}_{(x,0)}\Bigl[\Breve\Phi^{(1)}_{n-\uuptau}
\Ind_{\{\uuptau\le n\}}+ \Breve\Phi^{(0)}_{0}(\Breve{X}_0) \Ind_{\{\uuptau> n\}} \Bigr]\,.
\end{equation}
We use \cref{PT6.3F} and the hypothesis that $\Breve\Phi^{(0)}_{0}$ is bounded from below,
to obtain from \cref{PT6.3I} that
\begin{equation*}
\limsup_{n\to\infty}\,
\Breve\Exp^{\Hat{v}^{n}}_{(x,0)}[\uuptau\wedge n]
\,\le\, \kappa_3+\frac{1}{\Tilde\varepsilon_0}\,\Breve{V}^{(0)}_\star(x)
\end{equation*}
for some constant $\kappa_3$ which depends on $\Phi_0$.
This establishes \cref{PT6.3G}.

Continuing,  we assume without loss of generality
(as in the first part of the proof),
that the initial condition $\Phi_0$ is translated by a constant so that
$\Hat\kappa_0=0$ in \cref{PT6.3F}.
Recall that $\Psi_n=\Phi_n-V_*$.
Applying Dynkin's formula together with Fatou's lemma to
\cref{EL4.1B}
relative to the stopping time $\uuptau\wedge n$, we obtain
\begin{equation}\label{PT6.3J}
\begin{aligned}
\Breve\Psi^{(0)}_{2n}(x) &\,\ge\,
\Breve\Exp^{\Hat{v}^{2n}}_{(x,0)}\Bigl[\Breve\Psi^{(1)}_{2n-\uuptau}(\Breve{X}_\uuptau)
\Ind_{\{\uuptau\le n\}}+ \Breve\Psi^{(0)}_{n}(\Breve{X}_n) \Ind_{\{\uuptau> n\}} \Bigr]\\
&\,\ge\, -\sup_{k\ge n}\,\epsilon_k
+ \Breve\Exp^{\Hat{v}^{2n}}_{(x,0)}
\Bigl[\Breve\Psi^{(0)}_{n}(\Breve{X}_n) \Ind_{\{\uuptau> n\}}\Bigr]\,.
\end{aligned}
\end{equation}
Let $\widetilde{N}_0\in\NN$ be such that 
\begin{equation*}
2 \bigl(1+\Bar{C}_0\rho\bigr)\rho^{\widetilde{N}_0}/(1-\rho^{\widetilde{N}_0})
\,<\,\theta_1\,.
\end{equation*}
From \textup{\hyperlink{H2}{(H2)}}, we have,
$\widehat\rc_n\ge\theta_1 V_\star -\theta_2 - \beta$,
which, if we use in \cref{PT6.2A}, we obtain
\begin{equation}\label{PT6.3K}
\widehat{P}_n\, V_\star(x)\,\le\, \widetilde{C}_0 + \bigl(1-\tfrac{\theta_1}{2}\bigr) V_\star(x)
\qquad\forall\, n\ge \widetilde{N}_0\,,
\end{equation}
with
\begin{equation*}
\widetilde{C}_0 \,\df\,
\frac{\beta+\theta_2}{\theta_1}
+\frac{1}{1-\rho^{\widetilde{N}_0}}\bigl(\Bar{C}_0\Bar{C}_1 + \beta+\theta_2\bigr)\,,
\end{equation*}
In turn, \cref{PT6.3K} shows that
\begin{equation}\label{PT6.3L}
\Exp^{\Hat{v}^{2n}}_{(x,0)}\bigl[V_\star({X}_n)\bigr]
\,\le\, \tfrac{2\widetilde{C}_0}{\theta_1} +
\bigl(1-\tfrac{\theta_1}{2}\bigr)^n\,
V_\star(x) \qquad\forall\, n\ge \widetilde{N}_0\,.
\end{equation}
Shifting our attention to the split-chain, it is clear from
\cref{PT6.3A,PT6.3L} that
for some constant $\kappa_4$, we have
\begin{equation}\label{PT6.3M}
\Breve\Exp^{\Hat{v}^{2n}}_{(x,0)}\Bigl[
\babs{\Breve\Psi^{(0)}_{n}(\Breve{X}_n)}\Bigr]
\,\le\, \kappa_4\Bigl(1 + \bigl(1-\tfrac{\theta_1}{2}\bigr)^{2n}\,
V_\star(x)\Bigr) \qquad\forall\, n\ge \widetilde{N}_0\,.
\end{equation}
By \cref{PT6.3G,PT6.3M}, we obtain
\begin{equation*}
\liminf_{n\to\infty}\,
\Breve\Exp^{\Hat{v}^{2n}}_{(x,0)}
\Bigl[\Breve\Psi^{(0)}_{n}(\Breve{X}_n) \Ind_{\{\uuptau> n\}}\Bigr]\,\ge\,0\,,
\end{equation*}
which together with \cref{PT6.3J} shows that
$\liminf_{n\to\infty}\,\Breve\Psi^{(0)}_{2n}(x) \ge0$.
Using Dynkin's formula for $\Breve\Psi^{(0)}_{2n+1}$
in an analogous manner to \cref{PT6.3J}, we obtain the same
conclusion for this function.
Thus we have shown that
\begin{equation*}
\lim_{n\to\infty}\,\Breve\Psi^{(0)}_{n}(x)\,=\,0\,,
\end{equation*}
which completes the proof.
\end{proof}

We next show that
the sequence $\{\Hat{v}_n\}_{n\in\NN}$ is asymptotically optimal.

\begin{theorem}\label{T6.4}
In addition to \textup{\hyperlink{H2}{(H2)}}, we assume the following:
\begin{enumerate}[(a)]
\item[\ttup{a}]
The running cost $\rc$ is inf-compact on $\KK$.
\item[\ttup{b}]
There exists $\psi\in\Prob(\XX)$ such
that under the stabilizing policies $\Hat{v}_n$ in \cref{T6.2}, the controlled chain
is positive Harris recurrent and the corresponding invariant
probability measures are absolutely continuous with respect to $\psi$.
\end{enumerate}
Then for every $\Phi_0\in\fL(\XX)\cap\order(V_\star)$
the sequence $\{\Hat{v}_n\}_{n\in\NN}$ is asymptotically optimal in the sense that
\begin{equation*}
\lim_{n\to\infty}\,\widehat\uppi_n (\rc_n)\,=\,\beta
\end{equation*}
where $\widehat\uppi_n$ denotes the invariant probability measure
of the chain under the control $\Hat{v}_n$.
\end{theorem}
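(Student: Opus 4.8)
\emph{Reduction and the easy inequality.}
Fix $n\ge N_0$, with $N_0$ as in \cref{T6.2}, so that $\Hat{v}_n$ is stabilizing and, by hypothesis~\ttup{b}, the chain controlled by $\Hat{v}_n$ is positive Harris with a \emph{unique} invariant probability measure $\widehat\uppi_n$. Since $\Hat{v}_n$ is a (precise) stationary strategy, stationarity gives $\cE(\widehat\uppi_n,\Hat{v}_n)=\widehat\uppi_n(\rc_n)$, and hence $\widehat\uppi_n(\rc_n)\ge J_\star=\beta$ by \cref{T2.1}. Next recall from \cref{N4.1} the one-step relation $\Phi_{n+1}=\widehat\rc_n+\widehat{P}_n\Phi_n$ with $\widehat\rc_n=\rc_n-\beta$. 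Because \hyperlink{H2}{(H2)} together with \cref{T6.1} forces $\Phi_n\in\order(V_\star)$ uniformly in $n$, and $\widehat\uppi_n(V_\star)<\infty$ (see the next paragraph), we may integrate this identity against the $\widehat{P}_n$-invariant measure $\widehat\uppi_n$ and cancel, obtaining the Poisson-type identity
\begin{equation*}
\widehat\uppi_n(\rc_n)-\beta\,=\,\widehat\uppi_n(\Phi_{n+1}-\Phi_n)\,\ge\,0\,.
\end{equation*}
Thus the theorem follows once we prove that $\widehat\uppi_n(\Phi_{n+1}-\Phi_n)\to0$.

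\emph{Tightness.}
The bound obtained in the proof of \cref{T6.2} (see \cref{PT6.2C}) gives $\sup_{n\ge N_0}\widehat\uppi_n(\rc_n)<\infty$; equivalently, writing $\widehat\zeta_n\df\widehat\uppi_n\circledast\Hat{v}_n\in\eom$, we have $\sup_n\int_\KK\rc\,\D\widehat\zeta_n<\infty$. Since by hypothesis~\ttup{a} $\rc$ is inf-compact on $\KK$, its sublevel sets are compact in $\KK$, and Markov's inequality yields tightness of $\{\widehat\zeta_n\}$ in $\Pm(\KK)$, hence of $\{\widehat\uppi_n\}$ in $\Pm(\XX)$. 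Moreover, \hyperlink{H2}{(H2)} gives $V_\star\le\theta_1^{-1}(\rc_n+\beta+\theta_2)$, whence $\sup_{n\ge N_0}\widehat\uppi_n(V_\star)<\infty$.

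\emph{Passing to the limit along subsequences.}
It is enough to show that every subsequence of $\{\widehat\uppi_n(\rc_n)\}$ has a further subsequence converging to $\beta$. Using the tightness just established, pass to a subsequence along which $\widehat\zeta_{n_k}\Rightarrow\widehat\zeta_\infty$ in $\Pm(\KK^*)$; tightness forces $\widehat\zeta_\infty(\partial\KK)=0$, so $\widehat\zeta_\infty\in\Pm(\KK)$, while weak continuity of $P$ (\cref{A2.1}\,(i)) lets us pass to the limit in the ergodic-occupation-measure equation, so $\widehat\zeta_\infty\in\eom$; in particular $\int_\KK\rc\,\D\widehat\zeta_\infty\ge\beta$, and lower semicontinuity of $\rc$ gives $\int_\KK\rc\,\D\widehat\zeta_\infty\le\liminf_k\widehat\uppi_{n_k}(\rc_{n_k})$; also, disintegrating, $\widehat\uppi_{n_k}\Rightarrow\widehat\uppi_\infty$. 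It remains to prove $\widehat\uppi_{n_k}(\Phi_{n_k+1}-\Phi_{n_k})\to0$. By \cref{T6.1}, $\babs{\Phi_{n_k+1}-\Phi_{n_k}}\le 2\widehat{C}_0\bigl(1+(1-\theta_1)^{n_k}V_\star\bigr)$, and since $\sup_n\widehat\uppi_n(V_\star)<\infty$ the $(1-\theta_1)^{n_k}V_\star$-part contributes only $O\bigl((1-\theta_1)^{n_k}\bigr)\to0$ to $\widehat\uppi_{n_k}(\,\cdot\,)$; we are thus reduced to a \emph{uniformly bounded} sequence of functions that converges to $0$ pointwise (\cref{T6.3}), integrated against the tight measures $\widehat\uppi_{n_k}$. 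Here we invoke hypothesis~\ttup{b}: because each $\widehat\uppi_{n_k}$ is dominated by the single measure $\psi$, and (using the common $1$-small set $\cB$ of \hyperlink{A0}{(A0)}, shared by all controls) the regenerative structure of the split chains is uniform, the family $\{\widehat\uppi_{n_k}\}$ cannot concentrate along a vanishing sequence of sets; combining this with the $V_\star$-weighted bounds of \cref{T6.1,T6.3} yields $\widehat\uppi_{n_k}(\Phi_{n_k+1}-\Phi_{n_k})\to0$, hence $\widehat\uppi_{n_k}(\rc_{n_k})\to\beta$, which proves the claim.

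\emph{Where the work is.}
The delicate point is exactly that last step: pointwise convergence $\Phi_n-V_\star\to\text{const}$ (\cref{T6.3}) together with mere tightness of $\{\widehat\uppi_n\}$ does \emph{not} by itself force $\widehat\uppi_n(\Phi_{n+1}-\Phi_n)\to0$, since the invariant measures vary with $n$ and could in principle concentrate on regions where $\Phi_{n+1}-\Phi_n$ has not yet become small. Hypothesis~\ttup{b}, together with the uniform minorization in \hyperlink{A0}{(A0)}, is precisely what rules this out; turning it into a quantitative estimate — effectively, upgrading the pointwise convergence of \cref{T6.3} to a convergence that is uniform over the family $\{\widehat\uppi_n\}$, or equivalently proving a uniform-integrability statement for the densities $\D\widehat\uppi_n/\D\psi$ restricted to the sublevel sets of $V_\star$ — is the heart of the proof.
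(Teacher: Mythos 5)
Your proposal follows the same architecture as the paper's proof: reduce to the Poisson-type identity $\widehat\uppi_n(\rc_n)-\beta=\widehat\uppi_n(\Phi_{n+1}-\Phi_n)$ (the paper's \cref{PT6.4G}, obtained via the drift inequality \cref{PT6.4F} and the observation that $\Phi_{n+1}-\Phi_n\in\order(\rc_n)$), establish tightness of $\{\widehat\uppi_n\}$ from \cref{PT6.2C} and inf-compactness of $\rc$, and then show that $\widehat\uppi_n(\Phi_{n+1}-\Phi_n)\to0$. The first two steps are correct and match the paper.

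There is, however, a genuine gap at exactly the point you yourself flag as ``the heart of the proof.'' You assert that hypothesis \ttup{b} together with the minorization in \hyperlink{A0}{(A0)} prevents the measures $\widehat\uppi_{n}$ from concentrating where $f_n\df\Phi_{n+1}-\Phi_n$ is still large, but you never prove it: absolute continuity of each individual $\widehat\uppi_n$ with respect to $\psi$ does not by itself yield the required \emph{uniform} absolute continuity (i.e., uniform $\psi$-integrability of the densities $\Lambda_n=\frac{\D\widehat\uppi_n}{\D\psi}$), and you supply no mechanism for converting pointwise convergence of $f_n$ into $\widehat\uppi_n(f_n)\to0$ when the measures vary with $n$. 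The paper closes this gap in two concrete moves. First, it deduces uniform $\psi$-integrability of $\{\Lambda_n\}$ from the tightness of $\{\widehat\uppi_n\}$ combined with the common dominating measure $\psi$, and applies the Dunford--Pettis theorem to extract $\sigma(\cL^1,\cL^\infty)$-convergent subsequences, along which the $\widehat\uppi_n$ converge setwise. Second, it invokes Serfozo's theorem on convergence of Lebesgue integrals with varying measures: uniform integrability of the densities upgrades the $\psi$-measure convergence $f_n\to0$ (which follows from \cref{T6.1,T6.3}) to convergence in $\widehat\uppi_n$-measure, while the bound $\abs{f_n}\le \mathrm{const.}+\mathrm{const.}\cdot\rho^{n}\rc_n$ from \cref{PT6.4B}, together with $\sup_n\widehat\uppi_n(\rc_n)<\infty$ and inf-compactness of $\rc$, gives the tight and uniform $\{\widehat\uppi_n\}$-integrability of $f_n$ required by that theorem; the conclusion $\widehat\uppi_n(f_n)\to0$ then follows. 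Without these two ingredients (or an explicit substitute for them), your argument does not go through; the subsequence extraction of $\widehat\zeta_{n_k}\Rightarrow\widehat\zeta_\infty\in\eom$ that you carry out is not by itself sufficient, since weak convergence of the occupation measures gives no control over $\widehat\uppi_{n_k}(f_{n_k})$ for the index-dependent integrands $f_{n_k}$.
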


\begin{proof}
First, by \cref{PT6.2C},
we have
\begin{equation}\label{PT6.4A}
\widehat\uppi_n(c_n)\,\le\,
\beta+\frac{\bigl(1+\Bar{C}_0\rho^{n+1}\bigr)(\beta+\theta_2)}
{\theta_1-\bigl(1+\Bar{C}_0\rho\bigr)\rho^n}
\qquad\forall\, n\,\ge N_0\,,
\end{equation}
with $N_0$ as in the proof of \cref{T6.2}.

Combining \cref{PT6.1B}, \cref{PT6.1C}, and \textup{\hyperlink{H2}{(H2)}}, we obtain
\begin{equation}\label{PT6.4B}
\begin{aligned}
\abs{\Phi_{n+1}-\Phi_n} &\,<\,
\Bar{C}_0\Bar{C}_1 +\tfrac{\beta+\theta_2}{\theta_1}
+  (\Bar{C}_0+1) \rho^{n} V_\star(x)\\
&\,\le\,
\Bar{C}_0\Bar{C}_1 +
\theta_1^{-1}\theta_2(\Bar{C}_0+1) \rho^{n}
+\theta_1^{-1}(\Bar{C}_0+1) \rho^{n}\,\rc_n\,.
\end{aligned}
\end{equation}
Writing \cref{ELC01} as
\begin{equation}\label{PT6.4C}
\widehat{P}_n\, \Phi_n \,=\,  -\bigr(\widehat\rc_n -\Phi_{n+1}
+\Phi_{n}\bigr) + \Phi_{n}\,,
\end{equation}
and combining this with \cref{PT6.4B}, we obtain
\begin{equation}\label{PT6.4D}
\widehat{P}_n\, \Phi_n \,\le\,
\beta + \Bar{C}_0\Bar{C}_1 +
\theta_1^{-1}\theta_2(\Bar{C}_0+1)\rho^{n}
-\bigl(1-\theta_1^{-1}(\Bar{C}_0+1) \rho^{n}\bigr)\rc_n
+ \Phi_{n}\,.
\end{equation}
On the other hand, by \textup{\hyperlink{H2}{(H2)}} and \cref{PT6.1C},
we have
\begin{equation}\label{PT6.4E}
c_n \,\ge\, \theta_1\bigl(1+\Bar{C}_0\rho^n\bigr)^{-1}\bigl(\Phi_n-\Bar{C}_0\Bar{C}_1
\bigr) -\theta_2\,.
\end{equation}
Select $N_1$ such that
$2(\Bar{C}_0+1) \rho^{N_1}<\theta_1$.
Then, \cref{PT6.4D,PT6.4E} imply that there exists a constant $\Bar{C}_2$
 such that
\begin{equation}\label{PT6.4F}
\widehat{P}_n\, \Phi_n\,\le\, \Bar{C}_2 + \bigl(1-\tfrac{\theta_1}{2}\bigr)
 \Phi_n\qquad\forall\, n\ge N_1\,.
\end{equation}
Note that $\Phi_{n+1}-\Phi_n\in\order (c_n)$ by \cref{PT6.4B}.
Thus, \cref{PT6.4C,PT6.4F,PT6.4A} imply that
\begin{equation}\label{PT6.4G}
\widehat\uppi_n\bigl(\widehat\rc_n -\Phi_{n+1}+\Phi_{n}\bigr)
\,=\,0\qquad\forall\, n\,\ge N_0\vee N_1\,.
\end{equation}

 From the definition of ergodic occupation measures, it follows that they form a closed set and therefore so do the invariant probability measures under stationary strategies, which are marginals thereof. Since the latter are absolutely continuous with respect to $\psi$, it follows that their Radon-Nikodym derivatives with respect to $\psi$ are uniformly integrable and therefore weakly compact in $L^1$ by the Dunford-Pettis compactness criterion \cite[p.~27-II]{DM-78}, otherwise there would be a limit point of the invariant probability measures that is not absolutely continuous with respect to $\psi$. By the Eberlein-Smulian theorem \cite[p.~27-II]{DM-78}, this is equivalent to weak sequential compactness in $L^1$. 
Therefore every sequence of $\Lambda_n\df\frac{\D\widehat\uppi_n}{\D\psi}$
contains a subsequence which converges weakly in $L^1$.

Consider such a subsequence, which we denote as $\{\Lambda_n\}_{n\in\NN}$
for simplicity, and let $\Lambda$ be its limit.
Define $\widehat\uppi(A) \df \psi(\Ind_A\,\Lambda)$ for $A\in\Borel(\XX)$.
For every $f\in\Cc_b(\XX)$ we have
\begin{equation*}
\widehat\uppi_n (f) \,=\, \psi(f\,\Lambda_n)
\,\xrightarrow[n\to\infty]{}\, \psi(f\,\Lambda) \,=\, \widehat\uppi(f)\,.
\end{equation*}
On the other hand we have
\begin{equation}\label{PT6.4H}
\widehat\uppi_n (A) \,=\, \psi(\Ind_A\,\Lambda_n)
\,\xrightarrow[n\to\infty]{}\, \psi(\Ind_A\,\Lambda)\,,
\end{equation}
where $\widehat\uppi_n (A)=\widehat\uppi_n (\Ind_A)$ by a liberal
use of the notation.
Since, $\psi(f\,\Lambda) = \widehat\uppi(f)$ for all $f\in\Cc_b(\XX)$,
it follows of course that $\psi(\Ind_A\,\Lambda)=\widehat\uppi(A)$.
Thus $\lim_{n\to\infty}\,\widehat\uppi_n(A) =\widehat\uppi(A)$ by \cref{PT6.4H}.
Let $f_n\df\Phi_{n+1}-\Phi_{n}$.
Then
\begin{equation}\label{PT6.4I}
\widehat\uppi_n\bigl(\{\abs{f_n}>\epsilon\}\bigr)
\,=\, \int_{\{\abs{f_n}>\epsilon\}} \Lambda_n\,\D\psi
\,\le\, \sup_{m\in\NN}\,
\int_{\{\abs{f_n}>\epsilon\}} \Lambda_m\,\D\psi
\,\xrightarrow[n\to\infty]{}\,0
\end{equation}
by uniform integrability, since
$\psi\bigl(\{\abs{f_n}>\epsilon\}\bigr)\to0$ as $n\to\infty$
by \cref{T6.1}.
\Cref{PT6.4I} implies that
$f_n\to0$ in $\widehat\uppi_n$-measure in the sense of
\cite[p.~385]{Serfozo-82}.
It is also straightforward to verify using
\cref{PT6.4B,PT6.4A} and the inf-compactness of $\rc$, that
$f_n$ is tightly and uniformly $\{\widehat\uppi_n\}$-integrable in
the sense of definitions \cite[(2.4)--(2.5)]{Serfozo-82}.
Hence,
\begin{equation}\label{PT6.4J}
\widehat\uppi_n(f_n)\,\xrightarrow[n\to\infty]{}\,0
\end{equation}
by \cite[Theorem~2.8]{Serfozo-82}.
Since \cref{PT6.4J} holds over any sequence over which
$\Lambda_n$ converges in $\sigma(\cL^1,\cL^\infty)$,
it is clear that it must hold over the original sequence $\{n\}$.
The result then follows by \cref{PT6.4G,PT6.4J}.
\end{proof}

\begin{remark}
Concerning the positive Harris assumption in \cref{T6.4}, it is
clear that the Lyapunov equation \cref{PT6.4F} implies that
the controlled chain is bounded in probability.
If in addition the chain is a $\psi$-irreducible $T$-model
(see \cite[p.~177]{Tweedie-94}) then
it is positive Harris recurrent \cite[Theorem~3.4]{Tweedie-94}.
\end{remark}

The result in \cref{T6.2,T6.4} justify in particular the use of $\hat{v}_n$ for large $n$ as
a `rolling horizon' approximation of optimal long run average policy,
as is often done in Model Predictive Control, a popular approach in control
engineering practice (see, e.g., \cite{Mesbah-16}), wherein one works with time
horizons of duration $T \gg 1$ and at each time instant $t$,
the Markov control strategy optimal for the finite horizon control problem
on the horizon $[t, t+1,\dotsc, t+T]$ is used.

We present an important class of problems for which \hyperlink{H2}{(H2)} is satisfied.

\begin{example}\label{Ex6.1}
Consider a linear quadratic Gaussian (LQG) system
\begin{equation}\label{Ex6.1A}
\begin{aligned}
X_{t+1}&\,=\,A X_{t}+B U_{t}+D W_{t}\,,\quad t \ge 0 \\
X_0&\;\sim\;\cN(x_0,\Sigma_0)\,,
\end{aligned}
\end{equation}
where $X_{t}\in\Rd$ is the system state, $U_{t}\in\RR^{d_u}$ is the control,
$W_{t}\in\RR^{d_w}$ is a white noise process,
and $\cN(x,\Sigma)$ denotes the normal distribution
in $\Rd$ with mean $x$ and covariance matrix
$\Sigma$.
We assume that each $W_t\sim\cN(0,I_{d_w})$ is
i.i.d. and independent of $X_0$, and that $(A,B)$ is stabilizable.
The system is observed via a finite number of sensors scheduled or queried
by the controller at each time step.
Let $\{\gamma_{t}\}$ be a Bernoulli process indicating
if the data is lost in the network:
each observation is either received ($\gamma_{t}=1$)
or lost ($\gamma_{t}=0$).
A scheduled sensor attempts to send information to the controller through the
network; depending on the state of the network, the information may be received or lost.
The query process $\{Q_{t}\}$ takes values in
the finite set of allowable sensor queries denoted by $\QSp$.
The observation process $\{Y_t\}$  is given by
\begin{equation}\label{Ex6.1B}
Y_{t} \,=\, \gamma_t\left(C_{Q_{t-1}} X_{t}+F_{Q_{t-1}} {W}_{t}\right)\,,\quad t \ge 1,
\end{equation}
if $\gamma_t=1$, otherwise no observation is received.
The value of $\gamma_t$ is assumed to be known
to the controller at every time step.
In \cref{Ex6.1B}, $C_q$ and $F_q$ are matrices
which depend on the query $q\in\QSp$.
Their dimension is not fixed but depends on the number of sensors queried by $q$.

For each query $q\in\QSp$,
we assume that $\det(F_{q}F_{q}\transp)\neq 0$ and
(primarily to simplify the analysis) that $DF_{q}\transp=0$.
Also without loss of generality, we assume that $B$ is full rank; if not,
we restrict control actions to the row space of $B$.

The observed information is lost with a probability that depends on
the query, that is,
\begin{equation}\label{Ex6.1C}
\Prob(\gamma_{t+1}=0) = \lambda(Q_{t})\,,
\end{equation}
where the loss rate $\lambda\colon\QSp\to [0,1)$.

The running cost is the sum of a positive querying cost $\rcn\colon\QSp\to\RR$
and a quadratic plant cost $\rcp\colon\Rd\times\RR^{d_u}\to\RR$ given by
\begin{equation*}
\rcp(x,u)= x\transp R x + u\transp M u\,,
\end{equation*}
where $R,M\in\pdef$.
Here, $\pdef$ ($\psdef$) denotes the cone of real
symmetric, positive definite (positive semi-definite) $d\times d$ matrices.

The system evolves as follows.
At each time $t$, the controller takes an action ${v_{t}=(U_{t},Q_{t})}$,
and the system state evolves as in \cref{Ex6.1A}. Then the observation at $t+1$
is either lost or received, determined by \cref{Ex6.1B,Ex6.1C}.
The decision $v_{t}$ is non-anticipative, that is, it depend only on the
history $\cF_{t}$ of observations up to time $t$ defined by
\begin{equation*}
\cF_{t} \,\df\, \sigma(x_0,\Sigma_0,Y_{1},\gamma_{1},
\dotsc,Y_{t},\gamma_{t}).
\end{equation*}

This model is an extension of the one studied in \cite{Wu2008}.
More details can be found in \cite{CDC19} which considers an even
broader class of problems where the loss rate depends on the `network congestion'.

We convert the partially observed controlled Markov chain in
\cref{Ex6.1A,Ex6.1B,Ex6.1C} to an equivalent completely observed one.
Standard linear estimation theory tells us that the expected value of the state
$\widehat{X}_{t}\df\Exp[X_{t}\,|\,\cF_{t}]$ is a sufficient statistic.
Let $\widehat\Pi_t$ denote the \emph{error covariance matrix} given by
\begin{equation*}
\widehat\Pi_t = \text{cov} (X_{t}-\widehat{X}_{t})=
\Exp\bigl[(X_{t}-\widehat{X}_{t})(X_{t}-\widehat{X}_{t})\transp\bigr].
\end{equation*}
The state estimate $\widehat{X}_{t}$
can be recursively calculated via the Kalman filter
\begin{equation}\label{Ex6.1D}
\widehat{X}_{t+1} = A\widehat{X}_{t}+ B U_{t}
+ \widehat{K}_{Q_{t},\gamma_{t+1}}(\widehat\Pi_t)
\bigl(Y_{t+1} - C_{Q_{t}}(A\widehat{X}_{t}+ B U_{t})\bigr)\,,
\end{equation}
with $\widehat{X}_0=x_0$.
The Kalman gain $\widehat{K}_{q,\gamma}$ is given by
\begin{align*}
\widehat{K}_{q,\gamma}(\widehat\Pi)& \df \Xi(\widehat\Pi)\gamma C_{q}\transp
	\bigl(\gamma^{2} C_{q}\Xi(\widehat\Pi)C_{q}\transp +F_{q}F_{q}\transp\bigr)^{-1}\,, \\
\Xi(\widehat\Pi)& \df DD\transp+A\widehat\Pi A\transp,
\end{align*}
and the error covariance evolves on $\psdef$ as
\begin{equation*}
\widehat\Pi_{t+1} \,=\, \Xi(\widehat\Pi_t)
- \widehat{K}_{Q_{t},\gamma_{t+1}}(\widehat\Pi_t)C_{Q_{t}}\Xi(\widehat\Pi_t)\,,\quad
\widehat\Pi_0\,=\,\Sigma_0\,.
\end{equation*}
When an observation is lost ($\gamma_{t}=0$),
the gain $\widehat{K}_{q,\gamma_{t}}=0$ and
the observer \eqref{Ex6.1D} simply evolves without any correction factor.

Define $\cT_{q}\colon\psdef\to\psdef$ by
\begin{equation*}
\cT_{q}(\widehat\Pi) \,\df\,
\Xi(\widehat\Pi) - \widehat{K}_{q,1}(\widehat\Pi)C_{q}\Xi(\widehat\Pi)\,,
\quad q\in\QSp\,,
\end{equation*}
and an operator $\widehat\cT_{q}$ on functions
$f\colon\psdef\to\RR$,
\begin{equation*}
\widehat\cT_{q} f(\widehat\Pi) \,=\,
\bigl((1-\lambda(q)\bigr)f\bigl(\cT_{q}(\widehat\Pi)\bigr)
+ \lambda(q)f\bigl(\Xi(\widehat\Pi)\bigr)\,.
\end{equation*}
It is clear then that $\widehat\Pi_t$ forms a completely observed
controlled Markov
chain on $\psdef$, with action space $\QSp$,
and kernel $\widehat\cT_{q}$.
Admissible and Markov policies are defined as usual but with
${v_{t}=Q_{t}}$, since
the evolution of $\widehat\Pi_t$ does not depend on the state control $U_{t}$.

As shown in \cite{Wu2008}, there is a partial separation of control and observation
for the ergodic control problem which seeks to minimize the long-term average cost,
\begin{equation*}
J^{v} \,\df\, \limsup_{T\to\infty}\;\frac{1}{T}\;
\Exp^{v}\Biggl[\,\sum_{t=0}^{T-1}\bigl(\rcn(Q_{t})
+ \rcp(X_{t},U_{t})\bigr)\Biggr]\,.
\end{equation*}

The dynamic programming equation is given by
\begin{equation}\label{Ex6.1E}
V_\star(\widehat\Pi) + \varrho^*
=\min_{q\in\QSp}\;\bigl\{\rcn(s,q) + \trace(\Tilde\Pi^* \widehat\Pi)
 + \widehat\cT_{q} V_\star(\widehat\Pi) \bigr\}\,,
\end{equation}
with
$\Tilde\Pi^* \df R - \Pi^* + A\transp \Pi^* A$,
and $\Pi^*\in\pdef$ the unique solution of the algebraic Riccati equation
\begin{equation*}
\Pi^* = R + A\transp \Pi^* A {-} A\transp
\Pi^* B (M+B\transp \Pi^* B)^{-1}B\transp \Pi^* A.
\end{equation*}
If $q^*\colon\psdef\to\QSp$ is a selector of the
minimizer in \eqref{Ex6.1E}, then the policy given by
$v^*=\{U^*_t,q^*(\widehat\Pi_t\}_{t\ge0}$, with
\begin{equation*}
\begin{aligned}
U^*_{t} & \df - K^*\widehat{X}_{t}\,, \\
K^*& \df (M+B\transp \Pi^* B)^{-1}B\transp \Pi^* A\,,
\end{aligned}
\end{equation*}
and $\{\widehat{X}_t\}$ as in \eqref{Ex6.1D}, is optimal, and satisfies
\begin{equation*}
J^{v^*}=\inf_{v} J^{v}=\varrho^* + \trace(\Pi^* DD\transp)\,.
\end{equation*}
In addition, the querying component of any optimal stationary Markov policy
is an a.e. selector of the minimizer in \eqref{Ex6.1E}.

The analysis of the problem also shows that $V_\star$ is concave
and non-decreasing in $\psdef$, and thus
\begin{equation}\label{Ex6.1F}
V_\star(\Sigma)\,\le\, m_{1}^* \;\trace(\Sigma) + m_0^*\,,
\end{equation}
for some positive constants $m_{1}^*$ and $m_0^*$.
Note that the running cost corresponding to the equivalent completely observed
problem is
\begin{equation}\label{Ex6.1G}
r(q,\Sigma)\df \rcn(q)+\trace(\Tilde\Pi^* \Sigma)\,.
\end{equation}
It thus follows by \cref{Ex6.1F,Ex6.1G}
and the fact that $\Tilde\Pi^*\in\pdef$, that \hyperlink{H2}{(H2)} is satisfied
for this problem.
Note also that
the RVI, VI are given by
\begin{align*}
\varphi_{n+1}(\Sigma)&\,=\,\min_{q\in\QSp}\;
\bigl\{r(q,\Sigma) + \widehat\cT_{q}\varphi_{n}(\Sigma)\bigr\}
 - \varphi_{n}(0)\,,\\
\overline\varphi_{n+1}(\Sigma)&\,=\,\min_{q\in\QSp}\;
\bigl\{r(q,\Sigma) + \widehat\cT_{q}\overline\varphi_{n}(\Sigma)\bigr\} - \varrho^*\,,\;\;
 \overline\varphi_0=\varphi_0\,,
\end{align*}
respectively,
where both algorithms are initialized with the same function
$\varphi_0\colon\psdef\to\RR_+$.
\end{example}

\section*{Acknowledgements}
Most of this work was done during the visits of AA to
the Department of Electrical Engineering of
the  Indian Institute Technology Bombay and of VB
to the Department of Electrical and Computer Engineering at the University of Texas
at Austin, and the finishing touches were given when  both authors were at
the Institute of Mathematics of the Polish Academy of Sciences (IMPAN) in Warsaw,
for a workshop during the 2019 Simons Semester on Stochastic Modeling and Control.
The work of AA was supported in part by
the National Science Foundation through grant DMS-1715210, and in part
the Army Research Office through grant W911NF-17-1-001,
while the work of VB was supported by a J.\ C.\ Bose Fellowship. VB acknowledges
some early discussions with Prof.\ Debasish Chatterjee which spurred some of this work.

\bibliography{RVI-pseudo}

\end{document}